\theoremstyle{plain}
\newtheorem{thm}{Theorem}[section]
\newtheorem{theorem}[thm]{Theorem}
\newtheorem{lemma}[thm]{Lemma}
\newtheorem{corollary}[thm]{Corollary}
\newtheorem{proposition}[thm]{Proposition}
\theoremstyle{definition}
\newtheorem{remark}[thm]{Remark}
\newtheorem{definition}[thm]{Definition}
\newtheorem{example}[thm]{Example}
\numberwithin{equation}{section}
\newcommand{\0}{{\mathcal O}}
\newcommand{\sC}{{\mathcal C}}
\newcommand{\sD}{{\mathcal D}}
\newcommand{\sH}{{\mathcal H}}
\newcommand{\sL}{{\mathcal L}}
\newcommand{\sO}{{\mathcal O}}
\newcommand{\sP}{{\mathcal P}}
\newcommand{\C}{{\mathbb C}}
\newcommand{\BP}{{\mathbb P}}
\newcommand{\tr}{{\rm tr}}
\newcommand{\id}{{\rm Id}}
\newcommand{\End}{{\rm End}}
\newcommand{\bL}{\mathbf{L}}
\newcommand{\bX}{\mathbf{X}}
\newcommand{\fg}{{\mathfrak g}}
\newcommand{\fgl}{{\mathfrak g}{\mathfrak l}}
\newcommand{\ff}{{\mathfrak f}}
\newcommand{\fm}{{\mathfrak m}}
\newcommand{\fp}{{\mathfrak p}}
\newcommand{\fa}{{\mathfrak a}}
\newcommand{\fz}{{\mathfrak z}}
\newcommand{\fh}{{\mathfrak h}}
\newcommand{\fs}{{\mathfrak s}}
\newcommand{\fsp}{{\mathfrak s}{\mathfrak p}}
\newcommand{\aut}{{\mathfrak a}{\mathfrak u}{\mathfrak t}}
\newcommand{\fcsp}{{\mathfrak c}{\mathfrak s}{\mathfrak p}}
\newcommand\Aut{{\rm Aut}}
\newcommand{\Fr}{{\rm Fr}}
\def\Sym{\mathop{\rm Sym}\nolimits}
\def\Hom{\mathop{\rm Hom}\nolimits}
\def\min{\mathop{\rm min}\nolimits}
\newcommand\bbC{\mathbb{C}}
\newcommand\bbP{\mathbb{P}}
\newcommand\Ch{\operatorname{Ch}}
\newcommand\sfO{\mathsf{O}}
\newcommand\sfN{\mathsf{N}}
\newcommand\sfD{\mathsf{D}}
\newcommand\cC{\mathcal{C}}
\newcommand\cG{\mathcal{G}}
\newcommand\fX{\mathfrak{X}}
\tikzset{>={Stealth[scale=1.5]}}
\def\greencolor{green!60!black}
\def\graycolor{gray!60}
\newcommand\finf[1]{\mathfrak{aut}(#1)}
\newcommand\pr{\mathfrak{pr}}
\newcommand\fder{\mathfrak{der}}
\newcommand\Heis{\mathbb{H}}
\newcommand\heis{\mathfrak{H}}
\newcommand\CSp{{\rm CSp}}
\newcommand\wZ{\widehat{Z}}
\newcommand\bZ{\mathbf{Z}}
\newcommand\wbZ{\widehat{\bZ}}
\newcommand\GL{{\rm GL}}
\newenvironment{psm}
  {\left(\begin{smallmatrix}}
  {\end{smallmatrix}\right)}
\newcommand\diag{\operatorname{diag}}
\newcommand\cE{\mathcal{E}}
\newcommand\cR{\mathcal{R}}
\newcommand\cI{\mathcal{I}}
\newcommand\cL{\mathcal{L}}
\newcommand\wD{\widetilde{D}}
\newcommand\bbV{\mathbb{V}}
\newcommand\fann{\mathfrak{ann}}
\newcommand\tgr{\mathrm{gr}}
\newcommand\ad{\operatorname{ad}}
\newcommand\sfZ{\mathsf{Z}}
\title[Symmetries of $(2,3,5)$-distributions]{Symmetries of  $(2,3,5)$-distributions and associated Legendrian cone structures}
\author{Jun-Muk Hwang}
\address{Center for Complex Geometry, Institute for Basic Science (IBS), Daejeon 34126, Republic of Korea}
\email{jmhwang@ibs.re.kr}
\author{Dennis The}
\address{Department of Mathematics and Statistics, UiT The Arctic University of Norway, Troms\o{} 90-37, Norway}
\email{dennis.the@uit.no}
\subjclass[2020]{Primary: 58A30, 58J70; Secondary: 32L25, 34C41, 53A55}
\keywords{Symmetry, $(2,3,5)$-distribution, Legendrian cone structure}
\begin{document}

\maketitle

\begin{abstract}
We exploit a natural correspondence  between holomorphic $(2,3,5)$-distributions  and nondegenerate lines on holomorphic contact manifolds of dimension $5$ to present a new perspective in the study of symmetries of $(2,3,5)$-distributions. This leads to a number of new results in this classical subject, including an unexpected relation between the multiply-transitive families of models having $7$- and $6$-dimensional symmetries, and a one-to-one correspondence between equivalence classes of  nontransitive $(2,3,5)$-distributions with $6$-dimensional symmetries and nonhomogeneous nondegenerate Legendrian curves in $\BP^3$.
An ingredient for establishing the former is an explicit classification of homogeneous nondegenerate Legendrian curves in $\BP^3$, which we present.
\end{abstract}

\medskip

\section{Main Results}\label{s.1}
While studying some problems in algebraic geometry involving rational curves, Ngaiming Mok and the first-named author introduced (see \cite{HM99}) the notion of VMRT (abbreviation of Varieties of Minimal Rational Tangents), a special type of cone structures associated with certain families of rational curves on complex manifolds. The geometry of VMRT is reflected in a natural distribution on the space of rational curves (such as the one in Definition \ref{d.D}). For certain classes of rational curves on  complex manifolds of dimension 5, this natural distribution becomes a {\sl $(2,3,5)$-distribution}, i.e.\ a rank 2 distribution $D \subset TM$ on a 5-manifold $M$ with its derived distribution $D^2 = [D,D]$ having rank 3, and $D^3 = [D,D^2] = TM$.

It turns out that this association can be reverted: any $(2,3,5)$-distribution gives rise to a natural family of rational curves on a 5-dimensional holomorphic contact manifold (Theorem 5.10 of \cite{HLGC}). That a $(2,3,5)$-distribution has a naturally associated family of germs of  curves, called {\sl abnormal extremals}, on a 5-dimensional contact manifold  has been known before (\cite{BH} and \cite{Zel1999}). But the discovery that  these abnormal extremals, in the holomorphic setting,  are germs of natural rational curves  establishes a completely canonical 1-to-1 correspondence between  $(2,3,5)$-distributions  and   lines (in the sense of Definition \ref{d.line}) on 5-dimensional holomorphic contact manifolds. This correspondence is described by the following  canonical double fibration, where $D \subset TM$ is a $(2,3,5)$-distribution on a $5$-dimensional complex manifold and $\sC \subset \BP H$ is a Legendrian cone structure of VMRT type on a $5$-dimensional contact manifold $(X, H \subset TX)$. Here, the line distributions $E = \ker(d\mu)$ and $V = \ker(d\rho)$ on $\cC$ are the vertical distributions for the respective submersions to $X$ and $M$ respectively.

\begin{center}
\begin{figure}[h]
\begin{tikzcd}
& (\cC = \bbP D;E,V) \arrow[ld,"\rho"'] \arrow[rd,"\mu"] \\ (M;D) & & (X;H,\cC) \end{tikzcd}
\label{F:fibration-intro}
\caption{Canonical double fibration relating $(2,3,5)$-distributions and Legendrian cone structures of VMRT type}
\end{figure}
\end{center}

This is a generalization of  the ``flat'' double fibration associated to flag varieties of the 14-dimensional exceptional (complex) simple Lie group $G_2$ (Figure \ref{F:fibration-flat}). The fiber of the Legendrian cone structure on $G_2/P_2$ is equal to the rational normal curve in $\BP^3$, to be denoted by $\bZ \subset \BP^3.$

\begin{center}
\begin{figure}[h]
\begin{tikzcd}
& G_2 / P_{1,2} \arrow[ld] \arrow[rd] \\ G_2 / P_1 & & G_2 / P_2 \end{tikzcd}

\caption{Canonical double fibration associated to flag varieties of $G_2$ (see Section \ref{S:Tanaka} for $P_1, P_2, P_{1,2}$ notations)}
\label{F:fibration-flat}
\end{figure}
\end{center}

We note that the fundamental theorem of parabolic geometries \cite{CS2009} establishes a categorical equivalence between so-called {\sl regular, normal} Cartan geometries (modelled on generalized flag varieties) and underlying geometric structures.  In particular, this perspective exists for each of the three flag varieties in Figure \ref{F:fibration-flat}.  The $(2,3,5)$-geometries $(M;D)$ (resp. geometries of  $(\bbP D; E,V)$) in Figure \ref{F:fibration-intro} are precisely the underlying structures of regular, normal parabolic geometries modelled on $(G_2,P_1)$ (resp. $(G_2,P_{1,2})$).  But the Legendrian cone structures $(X;H,\cC)$ of VMRT type in Figure \ref{F:fibration-intro} are different from the underlying structures of regular, normal parabolic geometries modelled on $(G_2, P_2)$. Instead, the latter are $\bZ$-isotrivial Legendrian cone structures (see \cite{The2018} for examples) and they are of VMRT type only when flat, namely, equivalent to the geometry of the flag variety $(G_2, P_2)$.  In fact,  the result of \cite{Cap2005} implies that regular, normal parabolic geometries modelled on the double fibration  in Figure \ref{F:fibration-flat} form a double fibration only in the flat case.  Consequently, working within the standard framework of regular, normal parabolic geometries is too restrictive for our study.

The double fibration  in Figure \ref{F:fibration-intro} leads to a natural correspondence between symmetries of $(2,3,5)$-distributions and symmetries of the associated cone structures.  This correspondence gives us new insights in symmetries of $(2,3,5)$-distributions, an old subject going back to Cartan's work \cite{Car1910}.  The goal of this paper is to pursue  these insights and work out some explicit consequences.
To explain our results, we need the following terminology.

\begin{definition}\label{d.autD}
Let $D$ be a distribution on a complex manifold $M$, namely, a vector subbundle
$D \subset TM$ of the holomorphic tangent bundle $TM$. For an open subset $U \subset M$ and a vector field $\vec{v}$ on $U$, we have a canonical lifting of $\vec{v}$ to a vector field $\vec{v}'$ on $TU$. In fact, the 1-parameter family of local biholomorphisms of $U$ generated by $\vec{v}$ induce a 1-parameter family of local biholomorphisms of $TU$ whose derivatives give the vector field $\vec{v}'$ on $TU$.
\begin{itemize}
 \item[(0)] We say that a statement ${\bf P}$ holds for a {\sl general point} $y \in M$, if there exists a dense open subset $M^o \subset M$, which may depend on ${\bf P}$, such that ${\bf P}$ holds for any point $y \in M^o$.      \item[(i)] A vector field $\vec{v}$ is an {\sl infinitesimal automorphism} of the distribution $D$ if $\vec{v}'$ is tangent to $D|_U \subset TU$.
 \item[(ii)] For a point $y \in M$, denote by $\aut(D)_y$ the Lie algebra of germs of infinitesimal automorphisms of $D$ in neighborhoods of $y$ and by $\aut(D)^0_y$ the subalgebra of elements of $\aut(D)_y$ that vanish at $y$.
 \item[(iii)] We say $D$ is {\sl transitive at} $y \in M$ if  $\aut(D)_y$ generates $T_y M.$ We say that   $D$  is {\sl transitive} if it is transitive at some (hence a general)  point $y  \in M$ and $D$ is {\sl nontransitive} otherwise.
 \item[(iv)] We say $D$ is {\sl  multiply-transitive at} $y \in M$  if it is transitive at $y$ and $\aut(D)_y^0 \neq 0$. We say $D$ is {\sl multiply-transitive} if it is multiply-transitive at a general point $y \in M$.
 \item[(v)] Define
 \begin{eqnarray*}
 \dim \aut(D) &:=& \min_{y \in M}  \dim \aut(D)_y \\
 \dim \aut(D)^0 &:=& \min_{y \in M} \dim \aut(D)^0_y.
\end{eqnarray*}
 \item[(vi)] A point $y\in M$ is $\aut$-{\sl generic}, if $\dim \aut(D) = \dim \aut(D)_y$ and $\dim \aut(D)^0 = \dim \aut(D)_y^0$.   \end{itemize}
  \end{definition}

Our main result is the  following.

\begin{theorem}\label{t.aut}
Let $D$ be a $(2,3,5)$-distribution on $M$. \begin{itemize}
\item[(i)]  If $\dim \aut(D)^0 \geq 1$, then $\dim \aut(D) \geq 5.$
\item[(ii)] If $\dim \aut(D)^0 \geq 2$,   then  $\dim \aut(D) \geq 6$.
\item[(iii)] If  $\dim \aut(D)^0 \geq 3$, then $\dim \aut(D) = 14$.
\item[(iv)] If  $D$ is nontransitive and $\dim \aut(D) = 6$, then  $\dim \aut(D)^0 = 2.$
\end{itemize}
\end{theorem}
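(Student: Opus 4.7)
The plan is to combine three ingredients: Tanaka prolongation theory for the $(G_2,P_1)$-parabolic geometry underlying a $(2,3,5)$-distribution, so the full graded prolongation of the symbol $\fm = \fm_{-1}\oplus\fm_{-2}\oplus\fm_{-3}$ is the split form of $\fg_2$ with positive part $\fg_{\geq 0} = \fg_0\oplus\fg_+$ of dimension $9 = 4+5$; the classical Cartan gap theorem, which states that the symmetry dimension of a $(2,3,5)$-distribution is either $14$ or at most $7$; and the double fibration of Figure \ref{F:fibration-intro}, which transfers symmetries of $D$ to symmetries of the Legendrian cone structure $(X;H,\cC)$ and in particular identifies the isotropy at $y \in M$ with the stabilizer of the fiber $\cC_y = \rho^{-1}(y)$ on the contact side.

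For part (iii), assume $\dim \aut(D)^0 \geq 3$. In Cartan's classification, non-flat transitive models have generic isotropy at most $2$-dimensional (the submaximal $7$-dimensional models have $2$-dimensional generic isotropy, and lower ones have smaller isotropy). Nontransitive cases with $3$-dimensional isotropy can be excluded by checking that no graded subalgebra of $\fg_{\geq 0}$ of dimension $\geq 3$ is compatible with a positive-dimensional orbit of dimension smaller than $5$ in the prolongation. Hence the flat model is forced and $\dim\aut(D) = 14$.

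Parts (i) and (ii) amount to showing that nonzero isotropy forces a large orbit, since $\dim \aut(D) = \dim\aut(D)^0 + (\text{orbit dim})$ at any $\aut$-generic $y$. The graded isotropy $\mathrm{gr}\,\aut(D)^0_y$ injects into $\fg_{\geq 0}$, and a nonzero element $\xi \in \fg_k$ $(k \geq 0)$ commuted with generic elements of $\fg_{-}$ produces a nontrivial element of $\fg_{k-1}$; iterating down to $\fg_{-1}$ yields transverse flows, forcing the orbit through $y$ to have dimension at least $4$ in case (i), and in case (ii) an additional transverse direction from the second isotropy element, giving $\dim\aut(D) \geq 5$ and $\dim\aut(D) \geq 6$ respectively. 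The Legendrian cone viewpoint streamlines this analysis: isotropies of $D$ at $y$ lift to symmetries of $\cC$ preserving the nondegenerate Legendrian curve $\cC_y$, whose stabilizer is sharply constrained by the classification of homogeneous nondegenerate Legendrian curves in $\BP^3$ established earlier in the paper. Part (iv) then follows cleanly: nontransitivity forces orbit dimension at most $4$, so $\dim\aut(D)^0 \geq 6-4 = 2$; and $\dim\aut(D)^0 \geq 3$ would trigger (iii) to give $\dim\aut(D) = 14$, contradicting the hypothesis. Hence $\dim\aut(D)^0 = 2$.

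The main obstacle is the quantitative orbit bound in (i) and (ii), namely showing that the graded isotropy is not confined to the positive part $\fg_+$ but contributes visible flow directions at $y$. This mixes algebraic prolongation with Legendrian cone geometry, and the case analysis relies essentially on the classification of homogeneous Legendrian curves in $\BP^3$; without that input, ruling out, say, a nontransitive $D$ with orbit dimension $3$ and $1$-dimensional isotropy would require delicate curvature computations in the $G_2$-Cartan geometry that the Legendrian perspective largely bypasses.
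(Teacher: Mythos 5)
Your overall skeleton is right for part (iv) and you correctly identify the crux --- converting nonzero isotropy into a lower bound on the orbit dimension --- but the mechanism you propose for (i)--(iii) has genuine gaps and does not match how the paper actually gets there.

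The bracket-iteration argument for (i) and (ii) is circular. It is true that in the model algebra a nonzero $\xi \in \fg_k$ ($k \geq 0$) satisfies $[\xi,\fg_{-1}] \neq 0$ (Tanaka prolongation property (b)), but the elements of $\fg_{-1}$ you bracket against are not symmetries of $D$ unless $D$ is already transitive --- which is what you are trying to establish. The graded symmetry algebra $\fs(u) = \tgr(\ff(u))$ is a graded subalgebra of $\fg$, but $\fs_0(u) \neq 0$ does not by itself force $\fs_{-1}(u) \neq 0$, so ``iterating down to $\fg_{-1}$ yields transverse flows'' is unjustified. The paper's route is different: Theorem \ref{t.KT} (proved via prolongation-rigidity of $(G_2,P_1)$: $\fs_{\geq 0}(u) \subseteq \fann(\kappa_H(u)) \subset \fg_0$ and the grading element is excluded) shows a nonzero isotropy element acts nontrivially --- in fact the isotropy acts effectively --- on $\BP D_y \cong \BP^1$, so at most one dimension is lost in passing to $\aut(\sC)_x^0$. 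The orbit bound then comes not from an algebraic computation in the isotropy but from the chain-connectedness of the double fibration (Proposition \ref{p.bracket}: iterated brackets of $E \oplus V$ generate $T\sC$; Corollary \ref{c.bracket}): once the isotropy moves points along the fibers, general points of $X$ (resp.\ $Y$) are joined by chains of fibers, so $\sC$ (resp.\ $D$) is transitive, and transitivity of $\sC$ on the $5$-dimensional $X$ gives $\dim\aut(\sC) \geq 5$ directly, with $\aut(\sC)_x \cong \aut(D)_y$ by \eqref{E:CDsym}. Note also that the classification of homogeneous Legendrian curves (Theorem \ref{T:homZ}) plays no role in this theorem's proof, contrary to your closing claim.

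For (iii) you invoke Cartan's classification of multiply-transitive models, which the paper explicitly avoids (giving a classification-free proof is one of its stated points), and your proposed exclusion of nontransitive cases with $3$-dimensional isotropy (``no graded subalgebra of $\fg_{\geq 0}$ of dimension $\geq 3$ is compatible with \dots'') is asserted rather than proved; carrying it out would again require the prolongation-rigidity constraint. The paper instead argues: $\dim\aut(D)^0 \geq 3$ gives $\dim\aut(\sC)^0_x \geq 2$, which forces some isotropy element to act nontrivially on $\sC_x$, hence by the chain argument $D$ itself is transitive; then $\dim\aut(D) \geq 5 + 3 = 8$ and the classical gap theorem (Theorem \ref{t.G2}) forces $14$. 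Your part (iv) --- nontransitivity gives orbit dimension $\leq 4$, so $\dim\aut(D)^0 \geq 2$, while $\geq 3$ would contradict (iii) --- coincides with the paper and is fine once (iii) is in place.
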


In (iii) above, 14 is the maximal symmetry dimension of any $(2,3,5)$-distribution, this is attained locally unique structure (called the {\sl flat} $(2,3,5)$-distribution), and $\aut(D)$ is isomorphic to the Lie algebra of $G_2$. Some of the statements in Theorem \ref{t.aut} can be deduced {\em a posteriori} from the classification results of multiply-transitive $(2,3,5)$-distributions in \cite{Car1910}, \cite{DK2014}, \cite{The2022} (see Table 1 below).  But even for these cases, our arguments  give a new conceptual proof,  independent of classification results.

Fibers of the cone structure $\sC \subset \BP H$ in Figure \ref{F:fibration-intro} are Legendrian curves in $\BP^3$, namely, projective curves whose affine cones in $\C^4$ are Lagrangian with respect to  a symplectic form on $\C^4$.  For each $D$ in Theorem \ref{t.aut}, we have the associated Legendrian curve $Z \subset \BP^3$. For the multiply-transitive cases of (i)--(iii) in  Theorem \ref{t.aut}, the corresponding Legendrian curves are  homogeneous, while the Legendrian curves corresponding to the case (iv) are nonhomogeneous.  The nontransitive structures in (iv) were known to Cartan \cite[p.170, eq.\ (5)]{Car1910}, but  via the canonical double fibration (Figure \ref{F:fibration-intro}), we can  identify their geometric origin
in terms of nonhomogeneous Legendrian curves as follows.

 \begin{theorem}\label{t.1to1}
There is a natural one-to-one correspondence between
the equivalence classes of the germs at  $\aut$-generic points of nontransitive $(2,3,5)$-distributions $D$ with $\dim \aut(D) =6$ and the projective equivalence classes of
germs of nonhomogeneous nondegenerate Legendrian curves in $\BP^3$.
 \end{theorem}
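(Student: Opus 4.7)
The plan is to establish the bijection by constructing mutually inverse maps via the canonical double fibration of Figure \ref{F:fibration-intro}.

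\emph{Forward direction.} Given a germ $(M,y;D)$ of a nontransitive $(2,3,5)$-distribution at an $\aut$-generic point with $\dim\aut(D)=6$, I would pass to the associated Legendrian cone structure $(X;H,\cC)$ via the double fibration. By Theorem~\ref{t.aut}(iv), $\dim\aut(D)^0_y=2$. Choose any $c \in \rho^{-1}(y) \cong \BP^1$, set $x=\mu(c)\in X$, and let $(Z,c)$ be the germ of the fiber $\mu^{-1}(x)\subset \BP H_x \cong \BP^3$. Nondegeneracy of $Z$ follows from the VMRT-type hypothesis on $\cC$. For nonhomogeneity, I argue by contradiction: if the projective stabilizer of $(Z,c)$ in the contact $\BP^3$ acted transitively on the germ, then combining this action with the lift of the 6-dimensional $\aut(D)$ to $\cC$ and descending via $\rho$ would force $\aut(D)$ to act transitively on $M$, contradicting the hypothesis. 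Well-definedness of the projective equivalence class of $(Z,c)$ modulo the choice of $c$ follows from $\aut(D)^0_y$ acting on $\rho^{-1}(y) \cong \BP D_y$.

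\emph{Backward direction.} Given a germ $(Z,c)$ of a nonhomogeneous nondegenerate Legendrian curve in $\BP^3$, I would construct a germ of a Legendrian cone structure $(X,x;H,\cC)$ of VMRT type realizing $(Z,c)$ as its fiber germ, then apply Theorem~5.10 of \cite{HLGC} to extract the germ $(M,y;D)$ via the double fibration. The key point is that the germ $(Z,c)$ alone canonically determines the germ of $\cC$ near $c$: the deformation theory of $Z$ as a Legendrian curve in $\BP^3$, controlled by the $\Sp_4$-invariant contact geometry, produces a canonical germ of a 5-dimensional parameter space $X$ with its induced contact structure, over which the Legendrian curves form the cone structure $\cC$. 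The nonhomogeneity of $(Z,c)$ is the crucial rigidity condition that pins down this extension uniquely up to equivalence.

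\emph{Bijectivity and main obstacle.} Both maps are intrinsic to the double fibration, so their composition is the identity on germ-equivalence classes. Projective equivalence of germs $(Z,c)$ corresponds to contactomorphism equivalence of fiber germs in $\BP H$, which lifts through the double fibration to equivalence of germs $(M,y;D)$. To verify $\dim\aut(D)=6$ (rather than 7 or 14) for the $D$ produced in the backward direction, I would combine Theorem~\ref{t.aut}(iii) with the classification of homogeneous nondegenerate Legendrian curves in $\BP^3$ presented earlier in the paper: any larger $\aut(D)$ would force the fiber germ $(Z,c)$ to belong to one of the homogeneous classes, contradicting the nonhomogeneity hypothesis. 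The \textbf{main obstacle} is the canonical backward extension: rigorously showing that the germ $(Z,c)$ determines the germ of the VMRT-type cone structure $\cC$ without additional choices. This should follow from a careful deformation-theoretic analysis of Legendrian curve germs in $\BP^3$, exploiting the VMRT-type condition and nonhomogeneity to ensure that the extension is both existent and unique.
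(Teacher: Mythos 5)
There is a genuine gap: your argument never establishes the \emph{local flatness} of the associated Legendrian cone structure $(X;H,\cC)$, and this is the linchpin of the whole theorem. The paper's route is: nontransitivity of $D$ forces $\aut(\sC)^0_x$ to act \emph{trivially} on the fiber $\sC_x$ (Proposition \ref{p.trans}(ii), read contrapositively), so the isotropy jets are scalar multiples of $\id_{H_x}$, and Proposition \ref{p.HL} then gives local flatness. Local flatness immediately yields $6=\dim\aut(\sC)=5+\dim\finf{Z}$, hence $\dim\finf{Z}=1$, i.e.\ nonhomogeneity of $Z$; it also gives well-definedness of the fiber germ independently of the chosen point. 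Your substitute argument for nonhomogeneity --- combining the projective stabilizer of the single fiber $(Z,c)$ with the lift of $\aut(D)$ --- does not work: an element of $\finf{\sC_x}\subset\fcsp(H_x)$ is a linear symmetry of one fiber and does not extend to an infinitesimal automorphism of the cone structure $\sC$, so there is nothing to ``combine and descend'' to $M$.

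The same omission undermines your backward direction, which you correctly flag as the main obstacle but propose to resolve by a deformation-theoretic analysis of Legendrian curve germs in a fixed $\BP^3$; a germ of a curve in $\BP^3$ does not by itself generate a $5$-dimensional contact manifold with a cone structure, and in general a fiber germ does \emph{not} determine a VMRT-type cone structure (non-flat isotrivial structures with the same fiber exist). The paper instead uses the explicit flat model: Theorem \ref{t.Heisenberg} produces the left-invariant $Z$-isotrivial structure $\sC^Z$ on the Heisenberg group, and Proposition \ref{p.dim} shows by an orbit-dimension count that the resulting $D$ is nontransitive with $\dim\aut(D)^0=2$ (a point you also leave unaddressed --- you check $\dim\aut(D)=6$ but not nontransitivity). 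Uniqueness of the backward extension is then not a rigidity property of nonhomogeneous curves per se, but the fact, proved in the forward direction, that every $D$ in the given class has locally flat cone structure and is therefore locally isomorphic to this Heisenberg model.
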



For these reasons, it is worth clarifying the theory of Legendrian projective curves relevant to our study.

 The study of curves in projective spaces under projective equivalence is a classical subject \cite{Lie, Hal1880, Wil1905}.  In Section \ref{S:4ODE}, we recall details pertinent to the case of (germs of) nondegenerate Legendrian curves in $\bbP^3$, including a relative invariant $q_0$ (of weight 4) and an absolute invariant $\cI$ (when $q_0 \neq 0$).  In Section \ref{S:HNLC}, we specialize this to {\em homogeneous} such curves and completely describe in Theorem \ref{T:homZ} the projective equivalence classes $\bL_{r^2} \cong \bL_{1/r^2}$ (for $r^2 \in \C$), invariant classification via $(q_0, \cI)$, and representative curves.  In particular, $q_0 = 0$ distinguishes the rational normal curve $\bZ \subset \bbP^3$.

Application of this classification to curves $Z \subset \bbP^3$ arising from (complex) multiply-transitive $(2,3,5)$-distributions\footnote{As in  \cite{The2022}, each label refers to: (i) the root type of the fundamental Cartan quartic, (ii) the symmetry dimension, (iii) parameter (if relevant).} yields the results of Figure \ref{F:MTcurve}.

 \begin{figure}[h]
\begin{align}
\begin{array}{|c|c|c|c|} \hline
\mbox{$(2,3,5)$-distribution} & q_0 & \cI & \mbox{$(X;H,\cC)$ locally flat?} \\ \hline
\sfO14 & 0 & \cdot & \checkmark\\
\sfN7_c & \neq 0 & -\frac{c^2}{6} & \checkmark\\
\sfN6 &\neq 0 & -\frac{1}{7} & \times\\
\sfD6_a & \neq 0 & \frac{a^2}{36} & \times\\ \hline
\end{array}
\end{align}
\caption{Homogeneous nondegenerate Legendrian curves associated to multiply-transitive $(2,3,5)$-distributions}
\label{F:MTcurve}
\end{figure}

In particular, this yields unexpected relations between the homogeneous $(2,3,5)$-distributions with 7- and 6-dimensional symmetries:
 \begin{itemize}
 \item when $a^2 = -6c^2$, the associated curves $Z \subset \BP^3$ for $\sfN7_c$ and $\sfD6_a$ are projectively equivalent;
 \item if moreover $a^2 = -6c^2 = - \frac{36}{7}$, then the associated curve $Z \subset \BP^3$ for $\sfN6$ is projectively equivalent to the associated curves of $\sfN7_c$ and $\sfD6_a$.
 \end{itemize}

 Another interesting consequence is that the symmetry algebra of the $\sfN7_c$ case should be isomorphic to $\heis \rtimes \bbC^2$ (see Remark \ref{r.Heis}). This has been proved in \cite{DK2014} by explicit Lie algebra computation, but our argument gives a conceptual geometric proof of this fact.

  As an illustration of our results, we examine,
 at the end of Section \ref{S:SymLeg},  the {\sl rolling distribution} associated with two (real) 2-spheres with distinct ratio of radii $\rho > 1$ rolling on each other without twisting or slipping.  This is multiply-transitive, and its complexification has associated Legendrian curve (complex) projectively equivalent to $Z \subset \bbP^3$ arising from $\gamma(t) = \exp(tA) z$, where
 \begin{align}
 A = \diag(\rho,1,-1,-\rho), \quad z = (1,1,1,1)^\top \in \C^4.
 \end{align}
 The $\rho = 3$ case is geometrically distinguished: this is precisely the case of the rational normal curve $\bZ$.   Indeed, the distribution is of type $\sfD6_a$ for some $a$ when $\rho \neq 3$, while it is of type $\sfO14$ when $\rho = 3$.

 Finally, collecting all the ingredients from previous sections, we prove our main results in Section \ref{S:MainResults}.

\section{Legendrian curves in $\bbP^3$}

\subsection{Preliminaries}

 \begin{definition}\label{d.Z+}
 Let $V$ be a complex vector space and $\BP V$ its projectivization.  For any $z \in V \backslash 0$, we write $\widehat{z}$ for the line spanned by $z$, and corresponding element $[z] \in \BP V$.  Let  $Z \subset \BP V$ be a {\sl curve}, i.e.\ a (not necessarily closed) 1-dimensional complex submanifold.
 \begin{itemize}
 \item[(i)] Its {\sl affine cone} is given by $\wZ = \bigcup_{[z] \in Z} \widehat{z}$.
 \item[(ii)] Its {\sl affine tangent space at $z \in \wZ \backslash 0$} is the tangent space $T_z \wZ \subset V$.  We iteratively define its $(i+1)$-st {\sl osculating space} as
 \begin{align}
 T^{(i+1)}_z \wZ := T^{(i)}_z \wZ + \left\{ \gamma^{(i+1)}(0) \mid \gamma \mbox{ is a curve in } \wZ \mbox{ with } \gamma(0) = z \right\},
  \end{align}
  where $T^{(1)}_z \wZ := T_z \wZ$.  This generates the {\sl osculating sequence}
 \begin{align}
 \widehat{z} \,\,\subset \,\, T_z \wZ \,\,\subset \,\, T^{(2)}_z \wZ \,\,\subset \,\, T^{(3)}_z \wZ \,\,\subset \,\,...
 \end{align}

 \item[(iii)]  We say that $[z] \in Z$ is a {\sl nondegenerate point} of $Z$ if $T^{(k)}_z \wZ = V$ for some $k \geq 1$, and $Z$ is {\sl nondegenerate} if $Z$ has a nondegenerate point.

 \end{itemize}
 \end{definition}

 \begin{definition}
 Let  $(V,\sigma)$ be a symplectic vector space, so $\sigma : \wedge^2 V \to \bbC$ is a symplectic form, i.e.\ a nondegenerate skew-symmetric form.  Define the {\sl conformal symplectic group} $\CSp(V) \subset {\rm GL}(V)$ as the (connected) Lie subgroup preserving $\sigma$ up to an overall scaling factor, i.e.
 \begin{equation}
  \begin{aligned}
 \CSp(V) &= \{ A \in \GL(V) \mid \sigma (A(v),A(w)) = \lambda \sigma(v,w), \\
 &\hspace{2in} \forall v,w \in V, \, \exists \lambda \in \C^\times \}.
 \end{aligned}
 \end{equation}
 Its Lie algebra is the {\sl conformal symplectic algebra} $\fcsp(V) \subset \fgl(V)$.
 \end{definition}

 Let us now specialize to the $\dim V = 4$ case.

\begin{definition}\label{d.Legendre} Suppose $(V,\sigma)$ is a 4-dimensional symplectic vector space.  A curve $Z \subset \BP V \cong \bbP^3$ is {\sl Legendrian} if $\wZ \backslash 0$ is a Lagrangian submanifold of $V$ with respect to $\sigma$, i.e.\
 $\sigma(T_z \widehat{Z}, T_z \widehat{Z}) =0$ for each $z \in \wZ \backslash 0$.  Define
 \begin{align}
 \Aut(Z) := \{ A \in \CSp(V) \mid A(z) \in \widehat{Z}, \, \forall z \in \widehat{Z} \} \supset \bbC^\times \id_V.
 \end{align}
 This is a Lie subgroup with Lie algebra
 \begin{align}
 \aut(Z) := \{ u \in \fcsp(V) \mid u(z) \in T_z \widehat{Z},\,\, \forall z \in \widehat{Z} \} \supset \bbC\, \id_V.
 \end{align}
 If $Z$ is nondegenerate, then we say that $Z$ is {\sl homogeneous} if $\dim \finf{Z} \geq 2$ and {\sl nonhomogeneous} if $\dim \finf{Z} =1$.  (Note ${\rm Id}_V$ acts trivially on $Z$.)
 \end{definition}

\begin{example}\label{e.Sym3}
 Let $V = \Sym^3 W$, where $W = \C^2$.  Writing $w^3 := w \otimes w \otimes w$, the {\sl rational normal curve} $\mathbf{Z} \subset \BP V$ is the curve with affine cone
 \begin{align}
 \widehat{\mathbf{Z}} := \{ w^3 \in \Sym^3 W \mid  w \in W\}.
 \end{align}
 Via the natural $\fgl(W)$-representation on $\Sym^3 W$, $\bZ$ is homogeneous with
 \begin{align}
 \fgl(W) \cong \finf{\mathbf{Z}} \subset \fcsp(V) \subset \fgl(V).
 \end{align}

 Fix a basis $\{ x, y \}$ of $W$, so $\{ x^3, 3x^2y, 3xy^2, y^3 \}$ is a basis of $V$, where $x^2 y := \frac{1}{3} (x \otimes x \otimes y + x \otimes y \otimes x + y \otimes x \otimes x)$ and similarly for $xy^2$. Letting $\{ \theta^1,...,\theta^4 \}$ be the dual basis, there is a symplectic form
 \begin{align}
 \sigma = \theta^1 \wedge \theta^4 - 3 \theta^2 \wedge \theta^3 \in \wedge^2 V^*,
 \end{align}
 unique up to a nonzero scalar multiple, with respect to which $\mathbf{Z}$ is a (nondegenerate) Legendrian curve.
 In particular, $T^{(3)}_z \wbZ = V$ at any $z \in \wbZ \backslash 0$.  (By $\GL(W)$-invariance of $\bZ$, it suffices to verify this assertion at $z = x^3$.)
 \end{example}

\subsection{Curves in $\bbP^3$ and 4th order ODE}
\label{S:4ODE}

 Given any nondegenerate curve $Z \subset \bbP V$, we may consider its corresponding equivalence class under the action of $\GL(V)$, and invariants under this projective action.  The local study of the projective geometry of curves is a classical subject \cite{Lie, Hal1880, Wil1905}, and we summarize here aspects relevant for our study.  To any (unparametrized) nondegenerate curve in $\bbP V \cong \bbP^3$, viewed up to projective transformations, there is an associated linear homogeneous 4th order ODE:
 \begin{align} \label{E:4ODE}
 \cE: \quad u^{(4)} + p_3(t) u''' + p_2(t) u'' + p_1(t) u' + p_0(t) u = 0,
 \end{align}
 viewed up to the most general (point) transformations preserving this class of linear ODE:
 \begin{align} \label{E:ODEeq}
 (\widetilde{t},\widetilde{u}) = (\lambda(t), \mu(t) u).
 \end{align}

 Specifically, suppose that $Z$ is locally expressed in terms of a parameter $t$ via homogeneous coordinates as $[u_0(t): u_1(t): u_2(t): u_3(t)]$, for some functions $u_i(t)$ that are well-defined only up to multiplication by a nonvanishing function $\mu(t)$.  Nondegeneracy implies that $\{ u_i(t) \}$ are linearly independent and there is a unique associated ODE $\cE_Z$ \eqref{E:4ODE} having these as fundamental solutions.  Reparametrizing the curve via $\widetilde{t} = \lambda(t)$, or changing to $\widetilde{u}_i = \mu(t) u_i$ yields an equivalent ODE under \eqref{E:ODEeq}.  Conversely, given \eqref{E:4ODE}, let $\{ u_i(t) \}$ be a fundamental set of solutions, so there is a well-defined local curve $Z_\cE = \{ [u_0(t): u_1(t): u_2(t): u_3(t)] \} \subset \bbP^3$ that is nondegenerate.  Since $\{ u_i(t) \}$ are defined up to invertible linear transformations, then $Z_\cE$ is defined up to projective transformations.

 \begin{definition}
 A {\sl relative invariant of weight $k$} of \eqref{E:4ODE} is a function $I$ of the coefficients $p_i$ such that under \eqref{E:ODEeq}, it is transformed via $\widetilde{I} = \frac{1}{(\lambda')^k} I$.  A relative invariant of weight $0$ is an {\sl absolute invariant}.
 \end{definition}

 Via \eqref{E:ODEeq}, $\cE$ can always be brought to {\sl Laguerre--Forsyth canonical form}:
 \begin{align} \label{E:4ODELF}
 \mbox{LF}: \qquad u^{(4)} + q_1(t) u' + q_0(t) u = 0.
 \end{align}
 (We have dropped tildes here for convenience.)  The residual transformations preserving the Laguerre--Forsyth form \eqref{E:4ODELF} are
 \begin{align} \label{E:ODELFeq}
 (\widetilde{t},\widetilde{u}) = \left( \frac{at+b}{ct+d}, \frac{e}{(ct+d)^3} u\right),
 \end{align}
 where $a,b,c,d,e \in \C$.  Relative invariants of \eqref{E:4ODELF} under \eqref{E:ODELFeq} are similarly defined, and were thoroughly investigated by Wilczynski \cite{Wil1905}.  Of immediate importance to us are the {\sl Wilczynski invariants} $\Theta_3$ and $\Theta_4$, which are relative invariants of weight 3 and 4 respectively.  Expressions are in \cite{Wil1905}, or for example \cite[(2.3)]{Dou2008}.  The function $\Theta_3$ is a (nonzero) constant multiple of $q_1$.

 Restricting now to Legendrian curves $Z \subset \bbP V$, the associated ODE has $\Theta_3 \equiv 0$ \cite{Wil1905, DZ2013}, and so $\cE_Z$ is of the form
 \begin{align} \label{E:LFcan}
 \cE_Z: \quad u^{(4)} + q_0(t) u = 0.
 \end{align}

 \begin{proposition} \label{P:relinv}
 Up to \eqref{E:ODELFeq}, the ODE \eqref{E:LFcan} has relative invariants:
 \begin{enumerate}
 \item[(i)] $q_0$ of weight 4, and
 \item[(ii)] $\cR := 8 q_0 q_0'' - 9 (q_0')^2$ of weight 10.
 \item[(iii)] $\cI := \frac{\cR^2}{4096(q_0)^5}$ of weight 0, i.e.\ it is an absolute invariant.
 \end{enumerate}
 \end{proposition}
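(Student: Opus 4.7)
The plan is to verify (i)--(iii) by reducing to generators of the residual group \eqref{E:ODELFeq} and then computing directly. The overall constant rescaling $\widetilde{u} = eu$ (the parameter $e$ alone, with $\lambda = \mathrm{id}$) fixes $q_0$, $\cR$, $\cI$ identically and trivially satisfies all weight relations; so it suffices to handle the 3-parameter M\"obius subgroup. Writing $\lambda(t) = \frac{at+b}{ct+d}$, one has $\lambda'(t) = (ad-bc)/(ct+d)^2$, and the associated $\widetilde{u} = u/(ct+d)^3$ equals $(\lambda')^{3/2} u$ up to a constant that can be absorbed into $e$. So I would work with the reparametrization $\widetilde{t} = \lambda(t)$ paired with $\widetilde{u} = (\lambda')^{3/2} u$, making crucial use of the M\"obius identity $\lambda''' = \tfrac{3}{2}(\lambda'')^2/\lambda'$ (equivalently, the vanishing of the Schwarzian $S(\lambda) = 0$).

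For (i), I would substitute $u(t) = (\lambda'(t))^{-3/2}\, \widetilde{u}(\lambda(t))$ into \eqref{E:LFcan} and expand $u^{(4)}$ by the chain rule. Using $\frac{d}{dt} = \lambda' \frac{d}{d\widetilde{t}}$ on functions of $\widetilde{t}$, one obtains terms in $\widetilde{u}^{(k)}$ ($k \leq 4$) with coefficients polynomial in $\lambda', \lambda'', \lambda''', \lambda''''$; the vanishing Schwarzian forces $\lambda'''$ and $\lambda''''$ to reduce so that the coefficients of $\widetilde{u}', \widetilde{u}'', \widetilde{u}'''$ all vanish, leaving an equation of the form $\widetilde{u}^{(4)} + \widetilde{q}_0(\widetilde{t})\, \widetilde{u} = 0$ with $\widetilde{q}_0 = q_0/(\lambda')^4$. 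That is the weight-$4$ transformation law.

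For (ii), with (i) in hand I would apply $\frac{d}{d\widetilde{t}} = (\lambda')^{-1}\frac{d}{dt}$ twice to the identity $\widetilde{q}_0 = q_0/(\lambda')^4$, obtaining
\[
\widetilde{q}_0' = \frac{q_0'}{(\lambda')^5} - \frac{4 q_0 \lambda''}{(\lambda')^6},
\]
and an analogous expression for $\widetilde{q}_0''$ with extra terms in $\lambda''$ and $\lambda'''$. Substituting into $\widetilde{\cR} = 8\widetilde{q}_0 \widetilde{q}_0'' - 9(\widetilde{q}_0')^2$ and replacing $\lambda'''$ via the M\"obius identity, the $\lambda''$-dependent corrections must cancel identically, producing $\widetilde{\cR} = \cR/(\lambda')^{10}$. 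The specific coefficients $8$ and $-9$ in $\cR$ are uniquely pinned down by this cancellation; an illuminating reformulation is that $q_0^{-1/8}$ behaves as a section of a certain line bundle (effectively a $(-1/2)$-differential), and the identity $\cR = -64\, q_0^{17/8} (q_0^{-1/8})''$ presents $\cR$ as the second derivative of such a section, whose M\"obius-equivariance is obstructed only by the Schwarzian.

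Finally, (iii) is immediate: $\widetilde{\cI} = \widetilde{\cR}^2/(4096\, \widetilde{q}_0^{\,5}) = [\cR^2/(\lambda')^{20}] \cdot [(\lambda')^{20}/(4096 q_0^5)] = \cI$. The main obstacle is the combinatorial bookkeeping in (ii): verifying that the ratio $8:9$ is exactly the one producing cancellation of all $\lambda''$-terms upon invoking $S(\lambda) = 0$ is tedious but elementary, and this is where essentially all of the labor of the proof lies. I would keep the density reinterpretation as a conceptual guide for the inevitability of the coefficients, but carry out the explicit chain-rule computation for completeness.
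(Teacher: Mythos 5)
Your proof is correct and follows essentially the same route as the paper's: the paper also establishes (ii) by the identical chain-rule computation of $\widetilde{q}_0'$ and $\widetilde{q}_0''$ from $\widetilde{q}_0 = q_0/(\lambda')^4$ (using the explicit M\"obius parametrization of $\lambda$ where you invoke the equivalent identity $S(\lambda)=0$), and deduces (iii) immediately from (i) and (ii). The only real difference is in (i), where the paper simply cites Wilczynski/Doubrov for the fact that $\Theta_4$ is a constant multiple of $q_0$ (hence of weight $4$) rather than rederiving the Laguerre--Forsyth covariance from scratch as you propose; your supplementary identity $\cR = -64\, q_0^{17/8} (q_0^{-1/8})''$ is correct and gives a nice conceptual explanation, not in the paper, of why the coefficients $8$ and $-9$ are forced.
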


 \begin{proof}
 The ODE \eqref{E:LFcan} has $\Theta_3 \equiv 0$.  In this case,
 \begin{enumerate}
 \item[(i)] From \cite[(2.3)]{Dou2008}, $\Theta_4$ is a constant multiple of $q_0$, so $q_0$ is a relative invariant of weight 4.
 \item[(ii)] Wilczynski found another relative invariant \cite[(15) on p.242]{Wil1905}, denoted $\Theta_{4\cdot 1}$ there, which is a constant multiple of $\cR := 8 q_0 q_0'' - 9 (q_0')^2$.  Its weight was not stated in \cite{Wil1905}, so we establish this here via the chain rule. Let $T = \lambda(t) = \frac{at+b}{ct+d}$, with $t$-derivative $\lambda'(t) = \frac{ad-bc}{(ct+d)^2}$.  Write $Q_0(T) = \widetilde{q_0} = \frac{q_0(t)}{(\lambda')^4}$, and denote $T$-derivatives by dots.  Then:
 \begin{align}
 \dot{Q}_0 &= \frac{dQ_0}{dT} = \frac{ \left( \frac{q_0}{(\lambda')^4} \right)'}{\lambda'} = \frac{q_0'}{(\lambda')^5} - \frac{4q_0 \lambda''}{(\lambda')^6} \\
 &= \frac{(ct+d)^9}{(ad-bc)^5} ((ct+d) q_0' + 8 q_0 c) \nonumber\\
 \ddot{Q}_0 &= \frac{d}{dT} \left(  \frac{dQ_0}{dT} \right) = \frac{1}{\lambda'} \left(  \frac{dQ_0}{dT} \right)' \\
 &= \frac{(ct + d)^{10}}{(a d - b c)^6} ((c t + d)^2 q_0'' + 18 c (ct + d) q_0' + 72 q_0 c^2) \nonumber\\
 \widetilde\cR &= 8 Q_0 \ddot{Q}_0 - 9 (\dot{Q}_0)^2 = \frac{(ct + d)^{20}}{(a d - b c)^{10}} (8 q_0 q_0'' - 9 (q_0')^2) = \frac{\cR}{(\lambda')^{10}}
 \end{align}
 \end{enumerate}
 Finally, (iii) follows from (i) and (ii).
 \end{proof}

 See also Appendix \ref{S:R} for how (ii) is easily confirmed in Maple.

 \subsection{Homogeneous nondegenerate Legendrian curves in $\bbP^3$}
 \label{S:HNLC}

 Fix an isomorphism $V \cong \C^4$.  A {\em homogeneous} nondegenerate Legendrian curve $Z \subset \bbP V$ is locally the (image of the) projectivization of a (parametrized) curve in $V \backslash 0$:
 \begin{align} \label{E:wZ}
 \gamma(t) = \exp(tA) z,
 \end{align}
 where $A \in \fcsp(4)$ and $0 \neq z \in \C^4$.  Since the identity matrix acts trivially on $Z$, then we may assume that $A \in \fsp(4)$.  Given such an $A$, we say that a base point $z$ is {\sl $A$-admissible} if \eqref{E:wZ} yields a nondegenerate Legendrian curve.  By nondegeneracy, the osculating sequence at $[\gamma(t)]$ generates $V$, so
 \begin{align} \label{E:oscA}
 \exp(tA) z, \quad \exp(tA) Az, \quad \exp(tA) A^2 z, \quad \exp(tA) A^3 z
 \end{align}
 are linearly independent.  Applying $\exp(-tA)$, these remain linearly independent, so this implies that:\\

 $(\star)$: \quad {\em the minimal and characteristic polynomials of $A$ agree}.\\

 Our aim here is to describe the projective equivalence classes of germs of such curves, i.e.\ under $Z \mapsto PZ$, where $P \in \GL(4)$ is arbitrary.  Two natural related notions of equivalence on $(A,z)$ arise:
 \begin{enumerate}
 \item Given $P \in \GL(4)$, we have a curve $P\gamma(t) = \exp(t \widetilde{A}) \widetilde{z}$, where
 \begin{align} \label{E:equiv1}
 \widetilde{A} = PAP^{-1}, \quad \widetilde{z} = Pz.
 \end{align}
 (The symplectic form is conjugated to another symplectic form.)
 \item Despite introducing a parametrization in \eqref{E:wZ}, we are interested in {\em unparametrized} curves.  Under an affine reparametrization $\widetilde{t} = a t + b$, with $a \in \C^\times$ and $b \in \C$, we have $\exp(\widetilde{t} A) z = \exp(t\widetilde{A}) \widetilde{z}$, where
 \begin{align} \label{E:equiv2}
 \widetilde{A} = a A, \quad \widetilde{z} = \exp(b A) z.
 \end{align}
 \end{enumerate}
 However, such equivalences on $(A,z)$ are insufficient for studying projective equivalence of $Z \subset \bbP V$ associated to \eqref{E:wZ}, as the following example shows.

 \begin{example} \label{X:RNC}
 Consider $(A,z)$ given in (i) and (ii) below, which are clearly inequivalent under the group generated by \eqref{E:equiv1} and \eqref{E:equiv2}.  Our first claim is that (i) and (ii) generate Legendrian curves $Z \subset \bbP^3 = \bbP(\bbC^4)$ via \eqref{E:wZ} that are projectively equivalent to a germ of the rational normal curve $\bZ$.  The last column provides the explicit equivalence.
 \[
 \begin{array}{|c|c|c|c|c|c|} \hline
 & A \in \fsp(4,\sigma) & z & \sigma & \begin{tabular}{c} Image of $(e_1,e_2,e_3,e_4)$ under \\ isomorphism $\C^4 \to \Sym^3 \C^2$ \end{tabular}\\ \hline\hline
 \mbox{(i)} & \begin{psm}
 0 & 0 & 0 & 0\\
 1 & 0 & 0 & 0\\
 0 & 1 & 0 & 0\\
 0 & 0 & 1 & 0
 \end{psm} & \begin{psm}1\\0\\0\\0 \end{psm} &
 \begin{psm}
 0 & 0 & 0 & 1\\
 0 & 0 & -1 & 0\\
 0 & 1 & 0 & 0\\
 -1 & 0 & 0 & 0
 \end{psm} &
 (x^3,3x^2y,6xy^2,6y^3)\\ \hline
  \mbox{(ii)} & \begin{psm}
 3 & 0 & 0 & 0\\
 0 & 1 & 0 & 0\\
 0 & 0 & -1 & 0\\
 0 & 0 & 0 & -3
 \end{psm} & \begin{psm}1\\1\\1\\1 \end{psm} &
 \begin{psm}
 0 & 0 & 0 & 1\\
 0 & 0 & -3 & 0\\
 0 & 3 & 0 & 0\\
 -1 & 0 & 0 & 0
 \end{psm} &
 (x^3,3x^2y,3xy^2,y^3)\\ \hline
 \end{array}
 \]
 Let us provide some more details for (ii).  Note that $A^\top \sigma + \sigma A = 0$ yields $\sigma = \begin{psm}
 0 & 0 & 0 & a\\
 0 & 0 & b & 0\\
 0 & -b & 0 & 0\\
 -a & 0 & 0 & 0
 \end{psm}$.  From $\gamma(t) = \exp(At) z = \begin{psm}
 e^{3t} \\
 e^t\\
 e^{-t}\\
 e^{-3t}
 \end{psm}$, the Legendrian condition $\sigma(\gamma(t),\gamma'(t)) = 0$ forces $b = -3a$.  Via the stated isomorphism, we have
 \begin{align}
 \gamma(t) = e^{3t} x^3 + 3e^t x^2 y + 3e^{-t} xy^2 + e^{-3t} y^3 = (e^t x + e^{-t} y)^3.
 \end{align}
 Thus, this corresponds to $\bZ$.  In (i), $\gamma(t) = \begin{psm} 1\\ t\\ \frac{t^2}{2} \\ \frac{t^3}{6} \end{psm}$, and
details are similar.

 Our second claim is that for these $A$, if $z$ and $\widetilde{z}$ are $A$-admissible base points, then $(A,z)$ and $(A,\widetilde{z})$ are equivalent under \eqref{E:equiv1} via a matrix $P$ commuting with $A$.  Let $\widetilde{z} = \begin{psm} z_1\\z_2\\z_3\\z_4 \end{psm}$ and let $\widetilde\gamma(t) = \exp(At) \widetilde{z}$.  We have:
 \begin{align}
 \begin{array}{|c|c|c|c|} \hline
 & \widetilde\gamma(t) & \begin{tabular}{c} Ndg\\ condition \end{tabular} & P \\ \hline\hline
 \mbox{(i)} & \begin{psm}
 z_1\\
 z_2 + t z_1\\
 z_3 + t z_2 + \frac{t^2}{2} z_1\\
 z_4 + t z_3 + \frac{t^2}{2} z_2 + \frac{t^3}{6} z_1
 \end{psm} & z_1 \neq 0 &
 \begin{psm}
 z_1 & 0 & 0 & 0\\
 z_2 & z_1 & 0 & 0 \\
 z_3 & z_2 & z_1 & 0\\
 z_4 & z_3 & z_2 & z_1
 \end{psm}\\ \hline
 \mbox{(ii)} & \begin{psm}
 e^{3t} z_1 \\
 e^t z_2\\
 e^{-t} z_3\\
 e^{-3t} z_4
 \end{psm} & z_1 z_2 z_3 z_4 \neq 0 & \begin{psm}
 z_1 & 0 & 0 & 0\\
 0 & z_2 & 0 & 0\\
 0 & 0 & z_3 & 0\\
 0 & 0 & 0 & z_4
 \end{psm}\\ \hline
 \end{array}
 \end{align}
 We confirm that $PAP^{-1} = A$ and $\widetilde{z} = Pz$, where $z$ was specified in the previous table.
 \end{example}

 Let us apply Wilczynski theory from Section \ref{S:4ODE}.  First, recall that the spectrum of any $A \in \fsp(4)$ is invariant under negation.  (Any matrix $A$ acting on $V \cong \bbC^4$ induces an action on $V^* \cong \bbC^4$ via $-A^\top$, and existence of an $\fsp(4)$-invariant bilinear form implies that $V \cong V^*$ as $\fsp(4)$-reps.)  By \eqref{E:equiv1}, it suffices to consider the Jordan forms of $A$.  By $(\star)$, these are:
 \begin{align} \label{E:JFA}
 \underset{(\alpha\beta\neq 0,\, \alpha^2 \neq \beta^2)}{\begin{psm}
 \alpha & 0 & 0 & 0\\
 0 & \beta & 0 & 0\\
 0 & 0 & -\beta & 0\\
 0 & 0 & 0 & -\alpha
 \end{psm}}, \quad
 \underset{(\alpha\neq 0)}{\begin{psm}
 \alpha & 1 & 0 & 0\\
 0 & \alpha & 0 & 0\\
 0 & 0 & -\alpha & 1\\
 0 & 0 & 0 & -\alpha
 \end{psm}}, \quad
 \underset{(\alpha\neq 0)}{\begin{psm}
 \alpha & 0 & 0 & 0\\
 0 & 0 & 1 & 0\\
 0 & 0 & 0 & 0\\
 0 & 0 & 0 & -\alpha
 \end{psm}}, \quad
 \begin{psm}
 0 & 0 & 0 & 0\\
 1 & 0 & 0 & 0\\
 0 & 1 & 0 & 0\\
 0 & 0 & 1 & 0
 \end{psm}.
 \end{align}

 In particular, the minimal / characteristic polynomial of $A$ is:
 \begin{align} \label{E:charpoly}
 f_A(s) = s^4 - (\alpha^2+\beta^2) s^2 + \alpha^2 \beta^2,
 \end{align}
 which has roots $\pm \alpha$ and $\pm \beta$ (possibly zero).  By Cayley--Hamilton,  $f_A$ annihilates $A$, so
 \begin{align} \label{E:ACH}
 A^4 - (\alpha^2+\beta^2) A^2 + \alpha^2 \beta^2 \id = 0.
 \end{align}
 Given $Z \subset \bbP V$ determined by $\gamma(t) = \exp(tA) z$, differentiation yields $\gamma^{(i)}(t) = A^i \exp(tA) z$.
 Because of \eqref{E:ACH}, all components of $\gamma$ satisfy the scalar ODE
 \begin{align} \label{E:homODE}
 \cE: \quad u^{(4)} - (\alpha^2+\beta^2) u'' + \alpha^2 \beta^2 u = 0.
 \end{align}
 If $z$ is $A$-admissible, then the components of $\gamma$ are linearly independent, so these are fundamental solutions of \eqref{E:homODE}, and hence $\cE = \cE_Z$.

  If $\alpha^2 + \beta^2 = 0$, then \eqref{E:homODE} is in Laguerre--Forsyth canonical form \eqref{E:LFcan}, we have $q_0 = \alpha^2 \beta^2 = -\alpha^4$.  If $q_0 \neq 0$, then $\cI = 0$ from Proposition \ref{P:relinv}.

  If $\alpha^2+\beta^2 \neq 0$, then \eqref{E:homODE} is brought to canonical form via \eqref{E:ODEeq} with\footnote{See Appendix \ref{S:LF} for details on how to find this transformation.}
 \begin{align} \label{E:LFtransf}
 \lambda = -2\sqrt{10(\alpha^2 + \beta^2)} \tanh\left(t \sqrt{\tfrac{\alpha^2 + \beta^2}{10}} \right), \quad
 \mu = (\lambda')^{3/2}.
 \end{align}
 As a result, we find that \eqref{E:LFcan} has
 \begin{align} \label{E:q0}
 q_0(t) = -\frac{1600 (\alpha^2 - 9 \beta^2)(9 \alpha^2 - \beta^2)}{(t^2 - 40(\alpha^2 + \beta^2))^4}.
 \end{align}
 The absolute invariant $\cI$ from Proposition \ref{P:relinv} is then
 \begin{align} \label{E:absinv1}
 \cI = -\frac{(\alpha^2 + \beta^2)^2}{(\alpha^2 - 9\beta^2)(9\alpha^2 - \beta^2)}.
 \end{align}
 Alternatively, if \eqref{E:charpoly} is written as $f_A(s) = s^4 + c_2 s^2 + c_0$,
 then
 \begin{align} \label{E:CI}
 q_0(t) = -\frac{1600(9(c_2)^2-100 c_0)}{(t^2 + 40c_2)^4}, \quad
 \cI = \frac{(c_2)^2}{ 9(c_2)^2-100 c_0}.
 \end{align}

%

 \begin{theorem} \label{T:homZ}
 Let us denote by $\bL_{r^2}$ with $r \in \bbC$ the projective equivalence class of germs of homogeneous nondegenerate Legendrian curves represented by \eqref{E:wZ} with $z = (1,1,1,1)^\top$ and $A$  as follows.
  \[
 \begin{array}{|c||c|c|c|c|} \hline
 & \bL_{r^2}\,\, (r^2 \neq 0, 1) &  \bL_1 &  \bL_0\\ \hline\hline
 A &  \begin{psm}
 r &  &  & \\
  & 1 &  & \\
  &  & -1 & \\
  &  &  & -r
 \end{psm} &
 \begin{psm}
 1 & 1 &  & \\
 0 & 1 &  & \\
  &  & -1 & 1\\
  &  & 0 & -1
 \end{psm} &
 \begin{psm}
 1 &  &  & \\
  & 0 & 1 & \\
  & 0 & 0 & \\
  &  &  & -1
 \end{psm} \\ \hline
 \end{array} \] Then $\bL_{r^2} = \bL_{1/r^2}$ for $ r \neq 0$ and the family $\{ \bL_{r^2} \mid r \in \C\}$ covers all projective equivalence classes of germs of homogeneous nondegenerate Legendrian curves $Z \subset \bbP^3.$ Furthermore, we can list
   $\dim \finf{Z}$, the relative invariant $q_0$ and the absolute invariant $\cI$   as follows.
 \[
 \begin{array}{|c||c|c|c|c|c|} \hline
 & \bL_9 \cong \bL_{1/9} & \bL_{r^2} \cong \bL_{1/r^2} &  \bL_1 &  \bL_0\\ \hline\hline
 \multirow{2}{*}{$A$} & \begin{psm}
 3 &  &  & \\
  & 1 &  & \\
  &  & -1 & \\
  &  &  & -3
 \end{psm} & \begin{psm}
 r &  &  & \\
  & 1 &  & \\
  &  & -1 & \\
  &  &  & -r
 \end{psm} &
 \begin{psm}
 1 & 1 &  & \\
 0 & 1 &  & \\
  &  & -1 & 1\\
  &  & 0 & -1
 \end{psm} &
 \begin{psm}
 1 &  &  & \\
  & 0 & 1 & \\
  & 0 & 0 & \\
  &  &  & -1
 \end{psm} \\
 & & \mbox{{\tiny $(r^2 \in \C \backslash \{ 0, \tfrac{1}{9}, 1, 9 \})$}} & & \\ \hline
 \dim \finf{Z} & 4 & 2 & 2 & 2\\ \hline
 q_0 & 0 & \neq 0 & \neq 0 & \neq 0 \\
 \cI & - & \tfrac{(r^2+1)^2}{(r^2-9)(9r^2-1)} & -\tfrac{1}{16} & \tfrac{1}{9}\\ \hline
 \end{array}
 \] In particular,  $\bL_{r^2} = \bL_{\widetilde{r}^2}$  for $r^2 \neq \widetilde{r}^2 \in \C$  if and only if $r^2 \widetilde{r}^2 =1 $.
 \end{theorem}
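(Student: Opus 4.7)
\emph{Normal form reduction and admissible base points.} My plan is to begin with the observation that every homogeneous nondegenerate Legendrian curve is locally of the form $\gamma(t)=\exp(tA)z$ with $A \in \fsp(V)$ satisfying $(\star)$. Applying conjugation \eqref{E:equiv1} to put $A$ in Jordan form \eqref{E:JFA} and rescaling \eqref{E:equiv2} to normalize the nonzero eigenvalues, one reduces to exactly four candidates: the family $A_r = \diag(r,1,-1,-r)$ with $r^2\in\C\setminus\{0,1\}$, the Jordan block matrix (case~2, $\alpha=1$), the mixed matrix (case~3, $\alpha=1$), and the $4\times 4$ nilpotent Jordan block. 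Direct computation of $\gamma,\gamma',\gamma'',\gamma'''$ at $t=0$ confirms that $z_0=(1,1,1,1)^\top$ is $A$-admissible in each case. To pass from $z_0$ to an arbitrary admissible $\widetilde{z}$, I would mimic Example \ref{X:RNC} by exhibiting $P$ in the $\GL(V)$-centralizer of $A$ with $Pz_0=\widetilde{z}$, so that $(A,z_0)\sim(A,\widetilde{z})$ via \eqref{E:equiv1}.

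\emph{Establishing $\bL_{r^2}=\bL_{1/r^2}$ and absorbing the nilpotent case.} Let $\pi\in\GL(V)$ be the permutation matrix swapping coordinate pairs $(1,2)$ and $(3,4)$. Then $\pi A_r \pi^{-1}=\diag(1,r,-r,-1)$ while $\pi z_0=z_0$, and rescaling by $1/r$ via \eqref{E:equiv2} yields $A_{1/r}$; this establishes the involution $\bL_{r^2}=\bL_{1/r^2}$. For the nilpotent case, the explicit isomorphisms $\C^4\to\Sym^3\C^2$ in Example \ref{X:RNC} identify both its associated curve and the curve produced by $A_3$ with (an open subset of) the rational normal curve $\bZ$, placing the nilpotent case inside $\bL_9=\bL_{1/9}$. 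Combining these observations with the previous paragraph shows $\{\bL_{r^2}:r\in\C\}$ is exhaustive.

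\emph{Distinctness via the absolute invariant.} Using \eqref{E:CI} with $c_2=-(r^2+1)$ and $c_0=r^2$ for $A_r$, I would obtain $\cI=(r^2+1)^2/[(r^2-9)(9r^2-1)]$; the analogous computation gives $\cI=-1/16$ for $\bL_1$ and $\cI=1/9$ for $\bL_0$, while $q_0\equiv 0$ for $\bL_9$. The decisive algebraic step is that for $c\neq 1/9$, the equation $\cI(s)=c$ rearranges to $(1-9c)s^2+(2+82c)s+(1-9c)=0$, a quadratic in $s$ whose two roots have product $1$. Thus, within case~1, $\cI(r^2)=\cI(\widetilde{r}^2)$ with $r^2\neq\widetilde{r}^2$ forces $r^2\widetilde{r}^2=1$. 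The values $c=1/9$ and $c=-1/16$ are only attained at the excluded $s=0$ and $s=1$, separating $\bL_0$ and $\bL_1$ from every case-1 class; and $\bL_9$ is singled out by $q_0\equiv 0$. This proves the final distinctness assertion.

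\emph{Symmetry dimensions.} The inclusion $\{\id,A\}\subset\finf{Z}$ always gives $\dim\finf{Z}\geq 2$. For $\bL_9$, the $\fgl(W)$-action on $V=\Sym^3 W$ from Example \ref{e.Sym3} embeds into $\aut(\bZ)$, so $\dim\geq 4$; the matching upper bound follows because $\finf{Z}/\C\id$ injects into the algebra of projective vector fields on the rational curve $Z\cong\bbP^1$, which is $\fsl(2)$ of dimension $3$. The main obstacle is the upper bound $\dim\finf{Z}\leq 2$ in the remaining three cases. I would address it by analyzing $P\in\GL(V)$ preserving $\widehat{Z}$: writing $P\gamma(t)=\lambda(t)\gamma(\phi(t))$ and comparing exponential growth rates in the distinct $A$-eigendirections (which for case~1 with $r^2\notin\{0,1,9,1/9\}$ are pairwise unrelated by sign or reciprocal) forces $\phi(t)=t+b$, and this in turn forces $P\in\langle\id,\exp(sA)\rangle$, the $2$-dimensional centralizer subgroup. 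The Jordan block cases $\bL_1,\bL_0$ are handled by analogous direct computation on their respective centralizers.
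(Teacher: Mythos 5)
Your proposal is correct and follows essentially the same route as the paper's proof: reduction to the Jordan normal forms \eqref{E:JFA} with rescaling, normalization of the admissible base point to $(1,1,1,1)^\top$ via the centralizer of $A$, computation of $q_0$ and $\cI$ via \eqref{E:CI}, and a direct (admittedly tedious) verification that $\finf{Z}=\langle \id, A\rangle$ when $q_0\neq 0$. Your explicit permutation-plus-rescaling realization of $\bL_{r^2}=\bL_{1/r^2}$ and the quadratic-roots argument for $\cI(s)=c$ are welcome elaborations of steps the paper dispatches by appealing to the invariant alone, but they do not change the method.
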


 \begin{proof} Locally, the curve is the projectivization of \eqref{E:wZ}, where without loss of generality $A \in \fsp(4)$  is one of the matrices in \eqref{E:JFA}.  By \eqref{E:equiv2}, we may further rescale these, yielding the matrices in the table above.  We exclude (i) in Example \ref{X:RNC} since (ii) there leads to an equivalent curve $\bZ$ (with $\dim \finf{\bZ} = 4$).

 Our next claim is that any $A$-admissible base point $\widetilde{z} = (z_1,z_2,z_3,z_4)^\top$ can be normalized to $z = (1,1,1,1)^\top$ via a matrix $P$ commuting with $A$. The case $\diag(r,1,-1,-r)$ proceeds just as in Example \ref{X:RNC}.
 \begin{align}
 \begin{array}{|c|c|c|c|} \hline
 A & \widetilde\gamma(t) = \exp(At) \widetilde{z} & \begin{tabular}{c} Ndg\\ condition \end{tabular} & P \\ \hline\hline
 \begin{psm}
 1 & 1 &  & \\
 0 & 1 &  & \\
  &  & -1 & 1\\
  &  & 0 & -1
 \end{psm} & \begin{psm}
 e^t(z_1 + t z_2)\\
 e^t z_2\\
 e^{-t}(z_3 + t z_4)\\
 e^{-t} z_4
 \end{psm} & z_2 z_4 \neq 0 &
 \begin{psm}
 z_2 & z_1 - z_2 & 0 & 0\\
 0 & z_2 & 0 & 0\\
 0 & 0 & z_4 & z_3 - z_4\\
 0 & 0 & 0 & z_4
 \end{psm}\\ \hline
 \begin{psm}
 1 &  &  & \\
  & 0 & 1 & \\
  & 0 & 0 & \\
  &  &  & -1
 \end{psm} & \begin{psm}
 e^t z_1 \\
 z_2 + t z_3\\
 z_3\\
 e^{-t} z_4
 \end{psm} & z_1 z_3 z_4 \neq 0 &
 \begin{psm}
 z_1 & 0 & 0 & 0\\
 0 & z_3 & z_2 - z_3 & 0\\
 0 & 0 & z_3 & 0\\
 0 & 0 & 0 & z_4
 \end{psm}\\ \hline
 \end{array}
 \end{align}

Thus, take $z = (1,1,1,1)^\top$ and $\gamma(t) = \exp(At) z$.  The compatible $\sigma$ are:
 \begin{align}
 \begin{array}{|c||c|c|c|c|} \hline
 & \bL_{r^2} &  \bL_1 &  \bL_0\\ \hline\hline
 \sigma & \begin{psm}
 0 & 0 & 0 & 1\\
 0 & 0 & -r & 0\\
 0 & r & 0 & 0\\
 -1 & 0 & 0 & 0
 \end{psm} &
 \begin{psm}
 0 & 0 & 0 & 1\\
 0 & 0 & -1 & -1\\
 0 & 1 & 0 & 0\\
 -1 & 1 & 0 & 0
 \end{psm} &
 \begin{psm}
 0 & 0 & 0 & 1\\
 0 & 0 & -2 & 0\\
 0 & 2 & 0 & 0\\
 -1 & 0 & 0 & 0
 \end{psm} \\
 & \mbox{{\tiny $(r^2 \in \C \backslash \{ 0, 1 \})$}} & & \\ \hline
 \end{array}
 \end{align}
 From \eqref{E:q0}, $q_0 \equiv 0$ only for $\bL_9 \cong \bL_{1/9}$.  Otherwise, $q_0 \not\equiv 0$, and using the absolute invariant $\cI$ from \eqref{E:absinv1}, we see that $\bL_a \cong \bL_b$ if and only if $b = \frac{1}{a}$.  When $q_0 \not\equiv 0$, a tedious direct calculation verifies that $\finf{Z} = \langle \id, A \rangle$.
 \end{proof}

 \begin{remark}
 When $q_0 \neq 0$, the projective equivalence class $\bL_a$ could have been equivalently labelled using $\cI$, and any $\cI = c\in \bbC$ defines such a class.
 \end{remark}

 \begin{remark} In the proof of Theorem \ref{T:homZ}, the verification that $\dim \finf{Z} = 3$ is impossible for (homogeneous) nondegenerate Legendrian curves $Z \subset \bbP^3$ is straightforward, but tedious.  In Proposition \ref{P:LCgap}, we provide a simple conceptual proof of this fact.
 \end{remark}

%
%

 \section{Symmetries of Contact $G_0$-structures}

 \subsection{Contact $G_0$-structures}

 In this section, we review some  results on contact $G_0$-structures on a contact manifold.
Although we need the theory only for 5-dimensional contact manifolds, our discussion is for arbitrary (odd) dimension, as no extra effort is required for higher dimensions.

Let $(X,H)$ be a {\sl (complex) contact manifold}, i.e.\ $X$ is a complex manifold of odd dimension with a contact structure $H \subset TX$ and contact line bundle $L := TX/H$.
Denote by $\omega: \wedge^2 H \to L$ the homomorphism induced by the Lie bracket of local vector fields, equipping $H_x$ with  the  $L$-valued symplectic form  $\omega_x:  \wedge^2 H_x \to L_x$ for each $x \in X$.  Let $(V, \sigma)$ be a symplectic vector space with $\dim V = \dim X -1$.

 \begin{definition} The {\sl Heisenberg algebra} $\heis = \heis_{-2}   \oplus \heis_{-1}$ is the graded Lie algebra with $\heis_{-1} = V$ and $\heis_{-2} = \C$, and the Lie bracket defined by
 \begin{align}
 [v, w] = \sigma(v, w) \in \heis_{-2}, \quad \forall v, w \in \heis_{-1}.
 \end{align}
 Then $\CSp(V)$ can be identified with the group $\Aut_{{\rm gr}}(\heis)$ of graded Lie algebra automorphisms of $\heis$ (so that $c \cdot {\rm Id}_V, 0 \neq c \in \C,$  acts as the scalar multiplication by $c^2$ on $\heis_{-2}$).  The {\sl Heisenberg group} $\Heis$ is the simply connected complex algebraic group whose Lie algebra is $\heis$.

 The {\sl Heisenberg contact structure} $(\Heis,\sH)$ is the (left) $\Heis$-invariant contact structure $\sH \subset T\Heis$ obtained via left translation of the subspace $V=\heis_{-1}  \subset \heis =  T_o \Heis$ at the identity element $o \in \Heis$.
 \end{definition}

 A coordinate description of $(\Heis,\sH)$ in the 5-dimensional case is given in Proposition \ref{p.Heis}.

\begin{definition}\label{d.frame} \quad
  \begin{itemize}
 \item[(1)] A {\sl contact frame} at $x \in X$ is a linear isomorphism $f: V \to H_x$ such that $\omega_x(f(u), f(v)) =0$ for any $u, v \in V$ satisfying $\sigma(u, v) =0$. Let $\Fr_x(X,H)$ denote the set of all contact frames at $x$.  The {\sl contact frame bundle} $\Fr(X,H) := \bigcup_{x \in X} \Fr_x(X,H)$ is a principal $\CSp(V)$-bundle over $X$.
\item[(2)] Given a Lie subalgebra $\fg_0 \subset \fcsp(V)$, let $G_0 \subset \CSp(V)$ be the (not necessarily closed) connected Lie subgroup with Lie algebra $\fg_0$.  A {\sl contact $G_0$-structure} on $X$ is a $G_0$-principal subbundle $\sP \subset \Fr(X,H)$.
\item[(3)] A holomorphic vector field $\vec{v}$ on an open subset $U \subset X$ is a {\sl symmetry} ({\sl infinitesimal automorphism}) of the contact $G_0$-structure $\sP$ if the natural vector field $\vec{v}'$ on $\Fr(X,H)|_U$ induced by $\vec{v}$ is tangent to $\sP|_U$.
\item[(4)] For $x \in X$, we denote by $\aut(\sP)_x$ the Lie algebra of germs of symmetries of $\sP$ in neighborhoods of $x$.  Let $\aut(\sP)_{x}^0 \subset \aut(\sP)_x$ be the subalgebra consisting of $\vec{v} \in \aut(\sP)_x$ with $\vec{v}(x) =0.$
\end{itemize}
\end{definition}

\begin{example}  For $(\Heis,\sH)$ above, the left $\Heis$-action on $\Heis$ lifts to $\Fr(\Heis, \sH)$ and we identify $\Fr_o(\Heis, \sH) = \CSp(V)$.

For a connected subgroup $G_0 \subset \CSp(V) = \Fr_o(\Heis, \sH)$, its left translation gives a contact $G_0$-structure  $\sP^{G_0} \subset \Fr(\Heis, \sH)$ called  the {\sl flat contact $G_0$-structure}.
\end{example}

\begin{definition}\label{d.Heisenberg}
 A contact $G_0$-structure $\sP \subset \Fr(X,H)$ on a contact manifold $(X,H)$ is {\sl locally flat} if each point $x\in X$ admits  an open neighborhood $x \in U \subset X$ and a biholomorphic map $\varphi: U \to \varphi(U) \subset \Heis$ to an open subset in $\Heis$ such that the differential ${\rm d} \varphi: \Fr(X,H)|_U \to \Fr(\Heis, \sH)|_{\varphi(U)}$ sends $\sP|_U$ to the flat contact $G_0$-structure $\sP^{G_0}|_{\varphi(U)}$.
 \end{definition}

\subsection{Tanaka and contact prolongation}
\label{S:Tanaka}

An important algebraic tool in the study of differential geometric structures is Tanaka prolongation \cite{Tan1970}.

 \begin{definition}
 Let $\fm = \fm_{-\mu} \oplus ... \oplus \fm_{-1}$ be a negatively graded Lie algebra of {\sl depth} $\mu$ that is {\sl bracket-generating}, i.e.\ generated by $\fm_{-1}$.  Let $\fder_{{\rm gr}}(\fm)$ be the graded derivation algebra of $\fm$, which is the Lie algebra of the graded automorphism group $\Aut_{{\rm gr}}(\fm)$.  (Note that $\fder_{{\rm gr}}(\fm) \hookrightarrow \fgl(\fm_{-1})$ by the bracket-generating property.)  Let $\fg_0 \subseteq \fder_{{\rm gr}}(\fm)$ be a Lie subalgebra.  The {\sl Tanaka prolongation} of $(\fm,\fg_0)$ is the unique (up to isomorphism) graded Lie algebra $\pr(\fm,\fg_0) = \oplus_{i= -\mu}^{\infty} \pr_i(\fm, \fg_0)$ such that:
 \begin{enumerate}
 \item[(a)] $\pr_{\leq 0}(\fm,\fg_0)$ agrees with $\fm \oplus \fg_0$;
 \item[(b)] if $x \in \pr_i(\fm,\fg_0)$ for $i > 0$ satisfies $[x,\fm_{-1}] = 0$, then $x = 0$;
 \item[(c)] $\pr(\fm,\fg_0)$ is maximal with respect to the above two properties.
 \end{enumerate}
When $\fg_0 = \fder_{{\rm gr}}(\fm),$ we write $\pr(\fm) := \pr(\fm,\fder_{{\rm gr}}(\fm))$.
 \end{definition}

 For $\fm = \heis$, $\dim \pr(\fm)$ is infinite, but we may consider prolongation in the setting of contact $G_0$-structures:

 \begin{definition}  \label{D:CP}
 Given $\fm = \heis$ and $\fg_0 \subsetneq \fder_{{\rm gr}}(\fm)$, define $\fg = \pr(\fm,\fg_0)$.  We say that $A \in \fg_1$ (which is uniquely determined by a map $\heis \to \fg_0$) is a {\sl contact prolongation of $\fg_0$}.  If no nonzero contact prolongations exist, then
 \begin{align}
 \pr(\fm,\fg_0) \cong \fm \oplus \fg_0.
 \end{align}
 \end{definition}

 We now focus on $\fg_0 = \finf{Z} \subset \bbP V$ when $\dim V = 4$.

 \begin{theorem}\label{t.Abel} Let $Z \subset \BP V \cong \bbP^3$ be a nondegenerate Legendrian curve and $\mathbf{Z} \subset \BP \Sym^3 W$ the rational normal curve.  Then $\fg_0 = \finf{Z} \subset \fcsp(V)$ has a nonzero contact prolongation if and only if there exists a symplectic isomorphism $f: V \to \Sym^3 W$ such that $f(\widehat{Z}) \subset \widehat{\mathbf{Z}}$.
 \end{theorem}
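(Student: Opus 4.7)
The plan is to unpack the Tanaka definition of $\pr_1$ into concrete data and then treat each direction separately. A contact prolongation $A \in \pr_1(\heis, \fg_0)$ with $\fg_0 = \finf{Z}$ amounts to a linear map $\alpha : V \to \fg_0 \subset \fcsp(V)$ together with a vector $v_0 := A(z_0) \in V$, where $z_0$ generates $\heis_{-2}$. Writing out Jacobi for $A$ on $\heis_{-1} \wedge \heis_{-1}$ and on $\heis_{-1} \wedge \heis_{-2}$ yields the two conditions
\begin{equation*}
\alpha(x)y - \alpha(y)x = \sigma(x,y)\, v_0, \qquad \lambda(\alpha(x)) = -\sigma(x,v_0) \qquad (\forall\, x,y \in V),
\end{equation*}
where $\lambda : \fcsp(V) \to \bbC$ is the conformal character, so that $\lambda(\id_V) = 2$ and $\lambda|_{\fsp(V)} = 0$. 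All subsequent analysis is based on these two relations.

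For the $(\Leftarrow)$ direction, if $f(\widehat{Z}) \subset \widehat{\bZ}$ then, as both are irreducible $2$-dimensional cones in $4$-dimensional ambient spaces, $f$ locally identifies $\widehat{Z}$ with $\widehat{\bZ}$; conjugation by $f$ identifies $\finf{Z}$ with $\finf{\bZ} = \fgl(W) \hookrightarrow \fcsp(\Sym^3 W)$. The contact grading of the exceptional Lie algebra of $G_2$ associated with the long-root parabolic $P_2$ realizes $\pr(\heis, \fgl(W))$, and inspection of the $G_2$ root system shows $\fg_1$ is a $4$-dimensional irreducible $\fgl(W)$-module (isomorphic to $\Sym^3 W^*$ up to a character twist). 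In particular a nonzero contact prolongation of $\finf{\bZ}$, hence of $\finf{Z}$, exists.

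For the $(\Rightarrow)$ direction, suppose $A \neq 0$. By Theorem \ref{T:homZ}, $\dim \finf{Z} \in \{1, 2, 4\}$, and $\dim \finf{Z} = 4$ holds precisely when $Z$ is projectively equivalent to $\bZ$, so it remains to exclude the two smaller cases. Split $\fcsp(V) = \bbC\, \id_V \oplus \fsp(V)$ and write $\alpha(x) = \phi(x)\, \id_V + u(x)$ with $u(x) \in \fg_0 \cap \fsp(V)$. The conformal condition forces $\phi(x) = -\tfrac{1}{2}\sigma(x,v_0)$, and substituting back into the bracket identity produces
\begin{equation*}
u(x)y - u(y)x = \sigma(x,y)\, v_0 + \tfrac{1}{2}\sigma(x,v_0)\, y - \tfrac{1}{2}\sigma(y,v_0)\, x.
\end{equation*}
In the case $\finf{Z} = \bbC\, \id_V$ one has $u \equiv 0$; setting $y = v_0$ gives $\tfrac{3}{2}\sigma(x,v_0)\, v_0 = 0$ for all $x$, so nondegeneracy of $\sigma$ forces $v_0 = 0$ and hence $A = 0$. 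In the case $\dim \finf{Z} = 2$, write $\fg_0 = \langle \id_V, B\rangle$ and $u(x) = \psi(x)\, B$ with $B$ of one of the non-$\bZ$ Jordan types in \eqref{E:JFA}. Processing the displayed identity on pairs drawn from a (generalized) eigenbasis of $B$, and using that $\sigma$ pairs only opposite-eigenvalue components while the eigenvalue configuration of $B$ avoids $\diag(3,1,-1,-3)$, first forces $v_0 = 0$ and then $\psi \equiv 0$, contradicting $A \neq 0$.

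The main obstacle is the $\dim \finf{Z} = 2$ subcase: the three Jordan types of $B$ --- fully diagonalizable with distinct eigenvalues, two $2 \times 2$ Jordan blocks, and the mixed semisimple-nilpotent type --- each require a separate bookkeeping. Nonetheless, each reduces to a rigid linear system in the components of $\psi$ and $v_0$ whose solution space collapses to zero precisely because the eigenvalue pattern of $B$ differs from that of the rational normal curve.
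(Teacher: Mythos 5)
The paper offers no proof of Theorem \ref{t.Abel} to compare against: it is quoted from \cite[Thm.5]{HAb} with only the remark that the algebraicity hypothesis there is inessential. So your proposal must be judged on its own. Your unpacking of a contact prolongation into the pair $(\alpha,v_0)$ subject to the two displayed identities is correct, the $(\Leftarrow)$ direction is sound (the transfer of prolongations under the symplectic conjugation by $f$ is routine, using that $f^{-1}\finf{\bZ}f\subseteq\finf{Z}$ and that Tanaka prolongation is monotone in $\fg_0$), and the $\dim\finf{Z}=1$ case is complete.

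The gap is in the $\dim\finf{Z}=2$ case, which is the crux of the hard direction and is only asserted, not carried out; moreover the mechanism you cite is not the one that operates. Working through your linear system for $B=\diag(r,1,-1,-r)$, the pairs $(e_i,e_j)$ already force $\psi\equiv 0$ and $\sigma(e_i,v_0)=0$ for all $i$, hence $v_0=0$, for \emph{every} $r\neq 0$ --- including $r=3$. The same collapse occurs for the two Jordan-type generators. This is as it must be: one checks from the $G_2$ root decomposition that no nonzero $A\in\fg_1$ satisfies $[A,\fg_{-1}]\subseteq\fh$ for a Cartan subalgebra $\fh\subset\fg_0$, so even the two-dimensional subalgebra $\langle\id,\diag(3,1,-1,-3)\rangle$ of $\finf{\bZ}\cong\fgl(2)$ has vanishing contact prolongation. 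What singles out $\bZ$ is that $\finf{\bZ}$ is four-dimensional, not its eigenvalue pattern, so your statement that the solution space collapses ``precisely because the eigenvalue pattern of $B$ differs from that of the rational normal curve'' signals that the computation was not actually done. The conclusion is nevertheless true and the computations, once performed, are short, so the strategy succeeds. Two further points to repair: (i) when $\dim\finf{Z}=4$ you obtain only a projective equivalence $Z\cong\bZ$, whereas the statement demands a \emph{symplectic} $f$ with $f(\widehat{Z})\subset\widehat{\bZ}$; you need the uniqueness up to scale of the symplectic form rendering a nondegenerate curve in $\BP^3$ Legendrian, after which the linear lift can be rescaled to be symplectic. (ii) Your appeal to Theorem \ref{T:homZ} for $\dim\finf{Z}\in\{1,2,4\}$ must rest on the direct verification indicated in its proof, not on Proposition \ref{P:LCgap}, which lies downstream of Theorem \ref{t.Abel} and would render the argument circular.
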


The above is \cite[Thm.5]{HAb}.  (There, the submanifold $Z$ is assumed to be an algebraic subvariety of $\BP V$, but this assumption is not used in the proof.) Thus, generally $\fg = \pr(\fm,\fg_0) \cong \fm \oplus \fg_0$ except in the rational normal case $\bZ$.  Here, $\fg_0 = \finf{\mathbf{Z}}\cong \fgl(2)$, and $\fg = \pr(\fm,\fg_0)$ is the 14-dimensional exceptional complex simple Lie algebra $\fg$ of type $G_2$, equipped with a {\sl contact grading} $\fg = \fg_{-2} \oplus ... \oplus \fg_2$.  Let us put this into a broader context in order to introduce other prolongation results relevant to our study.

Recall that gradings on complex semisimple Lie algebras $\fg$ (with choice of Cartan subalgebra $\fh$) are classified by subsets $\mathcal{S}$ of simple roots $\{ \alpha_i \} \subset \fh^*$ (marked by crosses on Dynkin diagrams), with grading induced on the associated root space $\fg_\alpha$ via their $\mathcal{S}$-height, i.e.\ if $\alpha = \sum_i m_i \alpha_i$, then $\operatorname{ht}(\alpha) = \sum_{i \in \mathcal{S}} m_i$.  Since $\fg \cong \fg^*$ via the Killing form,  the grading is symmetric: $\fg = \fg_{-\mu} \oplus ... \oplus \fg_\mu$.  The non-negative part $\fp = \fg_{\geq 0}$ is a parabolic subalgebra.  Declaring $\fm = \fg_-$, we may ask when $\pr(\fm,\fg_0)$ or $\pr(\fm)$ is isomorphic to $\fg$ itself, and this was addressed in \cite{Yam1993} for all simple $\fg$.  In the $G_2$ case, the results are stated in Figure \ref{F:G2pr}.  (In the first case, we note the Heisenberg algebra $\heis \cong \fg_-$ and $\fg_0 \cong \fgl(2)$.)

 \begin{figure}[h]
 \[
 \begin{array}{|@{}c@{}|@{}c@{}|@{}c@{}|}\hline
 \raisebox{0.1in}{\Gdd{wx}{}} & \raisebox{0.1in}{\Gdd{xw}{}} & \hspace{-0.2in} \raisebox{0.1in}{\quad\,\,\Gdd{xx}{}} \\ \hline
 \begin{array}{c} \vspace{-0.17in}
 \begin{tikzpicture}[scale=0.7]
\foreach \i in {-2,...,2}
	\draw[\graycolor, yshift=8.6*\i mm] (180:-2) to (180:2) node[left]{\tiny $\i$};
\foreach \i in {240,300}
	\draw[->] (0,0) to (\i:1);
\foreach \i in {210, 270, 330}
	\draw[->] (0,0) to (\i:{sqrt(3)});
\foreach \i in {0,60,...,180}
	\draw[->, \greencolor] (0,0) to (\i:1);
\foreach \i in {30,90,150}
	\draw[->, \greencolor] (0,0) to (\i:{sqrt(3)});
\fill[\greencolor] (0,0) circle (0.1);
 \node[right] at (1,0) {$\alpha_1$};
 \node[above] at (150:{sqrt(3)}) {$\alpha_2$};
\end{tikzpicture}
\end{array} &
\begin{array}{c}
\begin{tikzpicture}[scale=0.7]
\foreach \i in {-3,...,3}
	\draw[\graycolor, xshift=3*\i mm, yshift=4*\i mm]  (150:-2) to (150:2) node[above left]{\tiny $\i$};
\foreach \i in {180,240,300}
	\draw[->] (0,0) to (\i:1);
\foreach \i in {210, 270}
	\draw[->] (0,0) to (\i:{sqrt(3)});
\foreach \i in {0,60,120}
	\draw[->, \greencolor] (0,0) to (\i:1);
\foreach \i in {-30,30,...,150}
	\draw[->, \greencolor] (0,0) to (\i:{sqrt(3)});
\fill[\greencolor] (0,0) circle (0.1);
 \node[right] at (1,0) {$\alpha_1$};
 \node[above] at (150:{sqrt(3)}) {$\alpha_2$};
\end{tikzpicture}
\end{array} &
\begin{array}{c} \vspace{-0.05in}
\begin{tikzpicture}[scale=0.7]
\foreach \i in {-5,...,5}
	\draw[\graycolor, xshift=\i mm, yshift=3.1*\i mm]  (160.8:-2) to (160.8:2) node[above=1mm, left=0mm]{\tiny $\i$};
\foreach \i in {180,240,300}
	\draw[->] (0,0) to (\i:1);
\foreach \i in {210, 270, 330}
	\draw[->] (0,0) to (\i:{sqrt(3)});
\foreach \i in {0,60,120}
	\draw[->, \greencolor] (0,0) to (\i:1);
\foreach \i in {30,90,150}
	\draw[->, \greencolor] (0,0) to (\i:{sqrt(3)});
\fill[\greencolor] (0,0) circle (0.1);
 \node[right] at (1,0) {$\alpha_1$};
 \node[above] at (150:{sqrt(3)}) {$\alpha_2$};
\end{tikzpicture}
\end{array}\\ \hline
\pr(\fm,\fg_0) \cong \fg &
\pr(\fm) \cong \fg &
\pr(\fm) \cong \fg \\ \hline
 \end{array}
 \]
 \caption{Tanaka prolongations associated to $\fg$ of type $G_2$}
 \label{F:G2pr}
 \end{figure}

 For geometric purposes, it is better to equip $\fg$ with a decreasing filtration $\fg = \fg^{-\mu} \supset ... \supset \fg^\mu$ via $\fg^i := \bigoplus_{i \geq j} \fg_j$, so that $\fp = \fg^0$.  If $G$ is the corresponding connected, simply-connected Lie group, then we let the parabolic subgroup $P \subset G$ be the corresponding connected Lie subgroup with Lie algebra $\fp$.  The three choices of $P$ in Figure \ref{F:G2pr} are denoted by $P_2, P_1, P_{1,2}$ respectively, and there is an associated double fibration (Figure \ref{F:fibration-flat}).

\subsection{Normalization conditions and local flatness}

\begin{definition}\label{d.prolong} Let $(V, \sigma)$ be a symplectic vector space and $\fg_0 \subset \fcsp(V)$.
\begin{itemize}
\item[(1)] Given $A \in \Hom(V, \fg_0)$, write $A_u := A(u) \in \fg_0$. Denote by $\vec{A} \in V$ the unique vector satisfying
 \begin{align*}
 \sigma( \vec{A}, u) = \frac{2}{\dim V} \tr (A_u), \quad \forall u \in V.
 \end{align*}
 \item[(2)] Define a homomorphism $\delta: \Hom( V, \fg_0) \to \Hom (\wedge^2 V, V)$ by
 \begin{align*}
 \delta A (u, v) = A_u(v) - A_v(u)  - \sigma (u, v) \ \vec{A}, \quad \forall  u, v \in V.
 \end{align*}
 \item[(3)] A subspace $W \subset \Hom(\wedge^2 V, V)$ complementary to ${\rm Im}(\delta)$ is called a {\sl normalization condition}.
  \end{itemize}
\end{definition}

\begin{remark}
The subspace $\ker(\delta) \subset \Hom(V,\fg_0)$ can be identified with the space of contact prolongations (denoted $\fg_1$) mentioned in Definition \ref{D:CP}.
\end{remark}

The following is a special case of Theorem 8.3 of \cite{Tan1970}. It can be also deduced from Theorem 4 of \cite{HAb}, which is a reformulation of Tanaka's result in the setting of contact $G_0$-structures.

\begin{theorem}\label{t.Tanaka}
Suppose that $\fg_0 \subset \fcsp(V)$ is a Lie subalgebra with no nonzero contact prolongation, i.e.\ $\pr(\heis,\fg_0) \cong  \heis \oplus \fg_0$, and let $G_0 \subset \CSp(V)$ be the connected Lie subgroup with Lie algebra $\fg_0$.  Fix a normalization condition.  Then for any contact $G_0$-structure $\sP \subset \Fr(X,H)$, there exists a natural absolute parallelism $\theta$ on $\sP$ such that $\theta$ is invariant under $\vec{v}'$ for any symmetry $\vec{v}$ of the contact $G_0$-structure.
\end{theorem}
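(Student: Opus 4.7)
The plan is to carry out the standard Tanaka-style prolongation procedure adapted to the contact setting. The hypothesis $\pr(\heis,\fg_0) \cong \heis \oplus \fg_0$ is precisely the statement that $\fg_0$ admits no nonzero contact prolongation; accordingly, no further principal prolongation of $\sP$ is needed, and the absolute parallelism will be built directly on $\sP$ itself.

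First I would introduce the tautological $\heis$-valued 1-forms on $\sP$. At each $p = (x,f) \in \sP$, the frame $f \colon V \to H_x$ and the contact filtration $H_x \subset T_x X$ give a canonical $V$-valued 1-form $\theta_{-1}$ (tautological on $H$), and, using the weight of the $\CSp(V)$-action on $L = TX/H$, a $\C$-valued 1-form $\theta_{-2}$. Both are $G_0$-equivariant. To complete them to an absolute parallelism I must pin down a $\fg_0$-valued 1-form $\theta_0$ that is a bona fide principal connection. I would pick any $G_0$-equivariant pseudo-connection $\hat\theta_0 \in \Omega^1(\sP,\fg_0)$ and define its torsion $\hat T \colon \sP \to \Hom(\wedge^2 V, V)$ via the structure equation
\begin{align*}
d\theta_{-1} + \hat\theta_0 \wedge \theta_{-1} \equiv \hat T(\theta_{-1} \wedge \theta_{-1}) \pmod{\theta_{-2}}.
\end{align*}

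The key algebraic step is then the following. Any two pseudo-connections $\theta_0,\hat\theta_0$ differ on the horizontal part by a $G_0$-equivariant function $\Phi \colon \sP \to \Hom(V,\fg_0)$, and a direct computation (absorbing the resulting Reeb contribution into the $\theta_{-2}$-part of the structure equation) shows that the corresponding torsions are related by $T = \hat T - \delta\Phi$, where $\delta$ is the map of Definition \ref{d.prolong}(2); the trace-correction $\vec\Phi$ is exactly what is needed to cancel the contribution along $\sigma(\theta_{-1}\wedge\theta_{-1})$. The hypothesis that $\fg_0$ has no contact prolongation says $\ker\delta = 0$, while the normalization $W$ is, by assumption, complementary to $\Im(\delta)$. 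There is therefore a \emph{unique} $\Phi$ making $T - \delta\Phi$ take values in $W$, and this singles out $\theta_0$ canonically from $\sP$ and the chosen $W$. The $(\heis \oplus \fg_0)$-valued form
\begin{align*}
\theta := \theta_{-2} \oplus \theta_{-1} \oplus \theta_0
\end{align*}
then restricts at each $p \in \sP$ to an isomorphism $T_p \sP \xrightarrow{\sim} \heis \oplus \fg_0$, i.e.\ an absolute parallelism. For invariance under symmetries I would invoke naturality together with this uniqueness: any symmetry $\vec v$ lifts to $\vec v'$ on $\Fr(X,H)$ tangent to $\sP$, whose flow preserves $H$, the frame bundle structure, and the reduction $\sP$, and hence preserves $\theta_{-1}$ and $\theta_{-2}$. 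The flow sends pseudo-connections to pseudo-connections and the condition ``torsion in $W$'' is manifestly $G_0$-invariant, so by the uniqueness of $\theta_0$ the flow must preserve $\theta_0$ as well, giving $\mathcal{L}_{\vec v'}\theta = 0$.

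The main obstacle is the bookkeeping in the torsion computation, in particular the precise appearance of the trace-correction term $\vec\Phi$ in $\delta$: it reflects the fact that altering the horizontal lift by $\Phi$ shifts the Reeb component of $d\theta_{-1}$, which must then be reabsorbed through the structural normalization $d\theta_{-2} \equiv \sigma(\theta_{-1}\wedge\theta_{-1}) \pmod{\theta_{-2}}$. Once this identification is confirmed, the remainder is a direct contact-geometric specialization of Tanaka's Theorem 8.3, as the author indicates.
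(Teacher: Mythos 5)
The paper does not actually prove this theorem: it is quoted as a special case of Theorem 8.3 of \cite{Tan1970} (equivalently, of Theorem 4 of \cite{HAb}), so there is no in-paper argument to compare against. Your sketch is a faithful reconstruction of the standard argument behind that citation: the tautological forms $\theta_{-2},\theta_{-1}$, the pseudo-connection, the identification of the change-of-torsion map with the modified Spencer differential $\delta$ of Definition \ref{d.prolong} (including the trace correction $\vec{\Phi}$, which encodes the coupling between the $\Hom(V,\fg_0)$-freedom in the pseudo-connection and the freedom in lifting the Reeb direction), and uniqueness of the normalized $\theta_0$ from $\ker\delta=0$ together with $\Hom(\wedge^2 V,V)=\Im(\delta)\oplus W$. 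One step you elide: the degree-1 normalization you describe pins down $\theta_{-1}$ on all of $T\sP$ and $\theta_0$ on $\pi_*^{-1}(H)$, but the value of $\theta_0$ on the transversal (Reeb) directions --- the $\Hom(\heis_{-2},\fg_0)$-component of the pseudo-connection --- remains free at that stage; a second normalization in degree 2 is required, and it works because $\fg_1=0$ forces $\fg_2=0$ in $\pr(\heis,\fg_0)$ by property (b) of the prolongation. This is an omission of a step of exactly the same flavor rather than an error, and with it included your naturality-plus-uniqueness argument for invariance of $\theta$ under symmetries goes through as stated.
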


\begin{corollary}\label{c.Tanaka}
In the setting of Theorem \ref{t.Tanaka}:
 \begin{itemize}
 \item[(i)] the isotropy representation ${\bf jet}^H_x: \aut(\sP)_x^0 \to \End(H_x)$ is injective;
 \item[(ii)]
$\dim \aut(\sP)_x \leq \dim \sP = \dim X + \dim \fg_0$ for any $x \in X$; and
\item[(iii)]  if $\dim \aut(\sP)_x = \dim \sP$, then $(\sP, \theta)$ is locally isomorphic (near $x$) to the Maurer--Cartan form on a Lie group with Lie algebra $\aut(\sP)_x$. \end{itemize} \end{corollary}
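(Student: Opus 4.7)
The plan is to exploit the absolute parallelism $\theta$ on $\sP$ provided by Theorem \ref{t.Tanaka}, together with the standard rigidity principle for $\{e\}$-structures: a holomorphic vector field $Y$ on a manifold equipped with an absolute parallelism $\theta$ and satisfying $\mathcal{L}_Y \theta = 0$ is determined on each connected component by its value at any single point. By Theorem \ref{t.Tanaka}, for each symmetry $\vec{v}$ of the contact $G_0$-structure, the lift $\vec{v}'$ is tangent to $\sP$ and preserves $\theta$, so this rigidity applies to every lifted symmetry.

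For (i), I would take $\vec{v} \in \aut(\sP)_x^0$ and examine its lift at a point $p \in \sP_x$. Since $\vec{v}(x)=0$, the vertical vector $\vec{v}'(p)$ lies in $\ker(d\rho_p: T_p \sP \to T_x X)$, which the principal $G_0$-bundle structure identifies canonically with $\fg_0$. Under the identification $p:V\xrightarrow{\sim} H_x$ supplied by the frame, the resulting element of $\fg_0 \subset \fcsp(V)$ coincides with the restriction of the linearization of $\vec{v}$ at $x$ to $H_x$, namely with $\mathbf{jet}^H_x(\vec{v})$. Hence $\mathbf{jet}^H_x(\vec{v})=0$ forces $\vec{v}'(p)=0$, and rigidity then forces $\vec{v}'\equiv 0$ on the connected component of $p$, whence $\vec{v}\equiv 0$ near $x$.

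For (ii), I fix $p \in \sP_x$ and consider the evaluation $\mathrm{ev}_p: \aut(\sP)_x \to T_p \sP$, $\vec{v}\mapsto \vec{v}'(p)$, which is linear. Injectivity is immediate from rigidity, and yields $\dim\aut(\sP)_x \leq \dim T_p\sP = \dim X + \dim\fg_0$. For (iii), equality forces $\mathrm{ev}_p$ to be a linear isomorphism, so the lifts of symmetries span $T_p\sP$. Integrating the action of the (abstract) simply connected Lie group $A$ with Lie algebra $\aut(\sP)_x$ on a neighborhood of $p$ produces a locally simply transitive action (simple transitivity again by rigidity, the stabilizer having trivial Lie algebra), yielding a local biholomorphism $\sP \to A$ near $p$ under which $\theta$ is identified with the Maurer--Cartan form of $A$.

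The main points requiring care are: first, the identification of the vertical lift $\vec{v}'(p)\in\fg_0$ with the isotropy element $\mathbf{jet}^H_x(\vec{v})\in \End(H_x)$ via the principal frame bundle machinery, which is what converts (i) into a one-point statement to which rigidity applies; and second, for (iii), promoting the infinitesimal action of $\aut(\sP)_x$ to an honest local Lie group action on $\sP$ and verifying that the resulting identification intertwines $\theta$ with the Maurer--Cartan form. Beyond these two conceptual points, the remaining steps are standard consequences of absolute parallelism rigidity.
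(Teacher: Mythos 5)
Your proposal is correct and follows essentially the same route as the paper: both rest on the rigidity of the absolute parallelism $\theta$ from Theorem \ref{t.Tanaka} (a symmetry of $\theta$ vanishing at one point vanishes identically), with (i) obtained by identifying the vertical part of the lift $\vec{v}'$ at $p \in \sP_x$ with $\mathbf{jet}^H_x(\vec{v})$, (ii) by the resulting injective evaluation into $T_p\sP$, and (iii) by the standard parallelism-to-Maurer--Cartan argument, which the paper simply cites from Olver rather than unwinding as you do.
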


\begin{proof}
By Theorem \ref{t.Tanaka}, a symmetry $\vec{v}$ of the contact $G$-structure $\sP$ on an open subset $U \subset X$ can be lifted to a vector field $\vec{v}'$ on $\sP|_U$ that is a symmetry of $\theta$.  Thus, if ${\bf jet}^H_x(\vec{v})=0$, then $\vec{v}'$ on a neighborhood of $\sP_x$ in $\sP$ must vanish at all points in $\sP_x$. But $\vec{v}'$ is a symmetry of the absolute parallelism $\theta$, so if it vanishes at a point, then it should vanish identically, so (i) follows.
The image of ${\bf jet}^H_x$ must be contained in a subalgebra of $\End(H_x)$ isomorphic to $\fg_0$, so
by (i), we have $\dim \aut(\sP)_x \leq \dim X + \dim \fg_0 = \dim \sP,$ which proves (ii).  Finally, \cite[Thm.8.16]{Olv} implies (iii).
 \end{proof}

A priori, the absolute parallelism $\theta$ in Theorem \ref{t.Tanaka} may not be a Cartan connection on $\sP$ because the normalization condition $W \subset \Hom(\wedge^2 V, V)$ may not be $G_0$-invariant. The obstructions to constructing a natural Cartan connection on $\sP$ and the curvatures of the natural Cartan connection when it exists have been explicitly calculated in \cite{HLSG} and \cite{HM}.  Here is a summary:

\begin{theorem}\label{t.HL}
 In the setting of Theorem \ref{t.Tanaka}, let $\fg = \pr(\heis,\fg_0) =  \heis \oplus \fg_0$.
 Let $\Hom(\wedge^2 \heis, \fg)_{\ell} \subset \Hom(\wedge^2 \heis, \fg)$ be the grade $\ell$ subspace.
Associated with a contact $G_0$-structure $\sP \subset \Fr(X,H)$, there exists a holomorphic section $\tau_{\ell}$ of  the vector bundle $\sP \times_{G} \Hom(\wedge^2 \heis,\fg)_{\ell}$ on $X$ for each $\ell \geq 1$ such that
 \begin{itemize}
 \item[(a)] $\tau_1$ is canonically determined by $\sP$;
 \item[(b)] $\tau_{\ell +1}$ is canonically determined by $\sP$ if $\tau_{1} = \cdots = \tau_{\ell} =0$; and
 \item[(c)] if $\tau_{\ell} =0$ for all $\ell \geq 1$, then the contact $G_0$-structure $\sP$ is locally flat.
 \end{itemize}
 Moreover, (a) implies that $\tau_1$ is annihilated by $\aut({\sP})_x$, and (b) implies that
 $\tau_{\ell+1}$  is annihilated by $\aut({\sP})_x$ if $\tau_1 = \cdots = \tau_{\ell} =0$.
\end{theorem}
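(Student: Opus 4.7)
\smallskip
\noindent\textbf{Proof proposal.} The plan is to obtain the $\tau_\ell$ as the graded components of the structure function of the absolute parallelism $\theta$ produced by Theorem \ref{t.Tanaka}, and to track how the ambiguity in the choice of normalization condition propagates.

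First, fix a normalization condition $W \subset \Hom(\wedge^2 V, V)$ and let $\theta$ be the associated absolute parallelism on $\sP$, with values in $\fg = \heis \oplus \fg_0$. Using the $\heis$-component of $\theta$ to trivialize the horizontal part, the structure function of $\theta$ can be packaged as a $G_0$-equivariant map $K : \sP \to \Hom(\wedge^2 \heis,\fg)$, and the grading on $\fg$ (together with the Heisenberg grading on $\heis$) decomposes $K = \sum_{\ell \geq 0} K_\ell$ with $K_\ell$ taking values in $\Hom(\wedge^2 \heis, \fg)_\ell$. The component $K_0$ is fixed by the very construction of the parallelism (it records the contact bracket on $\heis$) and can be dropped; the remaining components will be our candidates for $\tau_\ell$. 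Equivariance of $K$ under $G_0$ makes each $K_\ell$ descend to a section of $\sP \times_{G_0} \Hom(\wedge^2 \heis,\fg)_\ell$, and this is the vector bundle in the statement.

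The next step is to show that $\tau_1 := K_1$ does not depend on the choice of $W$. A different normalization $W'$ changes $\theta$ by a $G_0$-equivariant map $\sP \to \Hom(\heis,\fg_0)$, and a direct computation in the associated graded shows that the induced change in the structure function lies entirely in $\mathrm{Im}(\delta)$ in positive grades $\geq 2$, while in grade $1$ it lies in $\ker(\delta)$, which vanishes by the hypothesis that $\fg_0$ admits no contact prolongations. Hence $K_1$ is intrinsic. More generally, if $\tau_1 = \cdots = \tau_\ell = 0$, then the normalization ambiguity up to order $\ell$ no longer acts on $K_{\ell+1}$ (each successive refinement of $W$ only affects strictly higher grades, and the absence of prolongation again kills the potential kernel), so one may define $\tau_{\ell+1} := K_{\ell+1}$ and get a canonical section. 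This inductive step, controlling how normalization choices interact with the Spencer-type operator $\delta$ grade by grade, is the real technical heart of the argument and is what I expect to be the main obstacle — it requires the cochain bookkeeping carried out in \cite{HLSG} and \cite{HM}.

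Granting this, (c) is immediate: if all $\tau_\ell$ vanish, then $K \equiv 0$ in positive grades, so $\theta$ satisfies the Maurer--Cartan equation of the flat model $\sP^{G_0} \subset \Fr(\Heis, \sH)$; by the standard argument for parallelisms (as in Corollary \ref{c.Tanaka}(iii)), $(X, \sP)$ is locally biholomorphic to the flat model in the sense of Definition \ref{d.Heisenberg}. Finally, the invariance statement follows from Theorem \ref{t.Tanaka}: any $\vec{v} \in \aut(\sP)_x$ lifts to $\vec{v}'$ on $\sP$ with $\mathcal{L}_{\vec{v}'}\theta = 0$, hence $\mathcal{L}_{\vec{v}'} K = 0$, hence $\mathcal{L}_{\vec{v}'} K_\ell = 0$ for every $\ell$. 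Under the identification of sections of $\sP \times_{G_0} \Hom(\wedge^2 \heis, \fg)_\ell$ with equivariant functions on $\sP$, this translates into $\vec{v}$ annihilating $\tau_1$ at $x$ unconditionally, and annihilating $\tau_{\ell+1}$ at $x$ whenever the lower invariants vanish identically (the vanishing being needed to ensure that $\tau_{\ell+1}$ is well defined to begin with, and not merely defined up to lower-grade terms).
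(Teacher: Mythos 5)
You should first be aware that the paper does not prove this theorem itself: it is presented as a summary of results imported from the literature, with the construction of the $\tau_\ell$ attributed to the proof of Theorem 2.17 of \cite{HLSG} (equivalently Proposition 7.2 of \cite{HM}) and part (c) to Theorem 2.6 of \cite{HLG2}. Your overall strategy --- read the $\tau_\ell$ off as the graded components of the structure function of the parallelism $\theta$ from Theorem \ref{t.Tanaka}, deduce (c) from the Maurer--Cartan equation for $\fg=\heis\rtimes\fg_0$, and deduce the final invariance statement from $\mathcal{L}_{\vec{v}'}\theta=0$ --- is the expected one and is consistent with how those references proceed. The treatment of (c) and of the symmetry-annihilation statement is fine.

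The gap is in (a) and (b), exactly where you locate "the technical heart," and the specific mechanism you offer there does not work as stated. You claim that under a change of normalization condition the grade-$1$ change of the structure function "lies in $\ker(\delta)$, which vanishes." This conflates two different ambiguities. The space $\ker(\delta)\subset\Hom(V,\fg_0)$ is the contact prolongation $\fg_1$; its vanishing guarantees that, \emph{for a fixed} $W$, the $\fg_0$-component of $\theta$ is uniquely determined, so it cannot also absorb the dependence on $W$. The actual effect of replacing $W$ by $W'$ is to shift the grade-$1$ torsion by an element of $\mathrm{Im}(\delta)\cap\Hom(\wedge^2\heis,\fg)_1$ (note $\delta$ is grade-preserving here, since both $\Hom(V,\fg_0)$ and the torsion target $\Hom(\wedge^2\heis_{-1},\heis_{-1})$ sit in grade $1$), and this image is nonzero in general. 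So $K_1$ as you define it is \emph{not} independent of $W$; to get a canonical object one must either work with the class of $K_1$ modulo $\mathrm{Im}(\delta)$ (equivalently its $W$-component, with $W$ fixed once and for all as in Theorem \ref{t.Tanaka}), or run the genuinely inductive normalization of \cite{HLSG, HM}, in which $\tau_{\ell+1}$ is only constructed after $\tau_1=\cdots=\tau_\ell=0$ and the canonicity at each stage is verified by the cochain bookkeeping you defer. Since that bookkeeping is precisely the content of the statement, asserting it (with an incorrect justification for the base case) leaves the proof incomplete --- though, to be fair, the paper itself outsources exactly this step to the cited references.
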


 We remark that since $\fg = \fg_{-2} \oplus \fg_{-1} \oplus \fg_0$ and $\dim \fg_{-2} = 1$, then $1 \leq \ell \leq 3$ are the relevant grades for $\Hom(\wedge^2 \heis, \fg)_{\ell}$ appearing above.

The sections $\tau_{\ell}$ (for $\ell \geq 1$) in Theorem \ref{t.HL} are constructed  in the proof of Theorem 2.17 of \cite{HLSG}  or Proposition 7.2 of \cite{HM}. The property  (c) is a consequence of Theorem 2.6 of \cite{HLG2}, a reformulation of Theorem 2.17 of \cite{HLSG}.
One consequence of these constructions is the following.

 \begin{proposition} \label{p.HL}
In the setting of Corollary \ref{c.Tanaka}, assume that there exists an open subset $U \subset X$ such that for each point $x\in U$, there exists $\vec{v}_x \in \aut(\sP)^0_x$ with $ {\bf jet}^H_x(\vec{v}_x) = c_x \cdot {\rm Id}_{H_x}$ for some $0 \neq c_x \in \C$.
Then  the contact $G_0$-structure $\sP$ is locally flat.
 \end{proposition}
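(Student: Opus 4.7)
The plan is to show that all of the structural invariants $\tau_\ell$ of Theorem~\ref{t.HL} vanish on $U$, and then to invoke part~(c) of that theorem. By Corollary~\ref{c.Tanaka}(i), the isotropy representation embeds $\aut(\sP)^0_x \hookrightarrow \fg_0 \subset \fcsp(V) = \fcsp(H_x)$, so the hypothesis identifies the image of $\vec{v}_x$ with the scalar element $E_x := c_x \cdot {\rm Id}_V \in \fg_0$. The key observation is that $E_x$ is a nonzero rescaling of the infinitesimal grading element of $\heis$.

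First I would carry out the weight computation. Under the identification $\fcsp(V) = \fder_{{\rm gr}}(\heis)$ at the Lie algebra level, $E_x$ acts on $\heis_{-k}$ by multiplication by $k\,c_x$ (for $k=1,2$) and on $\fg_0$ trivially (since $c_x \cdot {\rm Id}_V$ is central in $\fgl(V)$). Extending by the Leibniz rule to the induced representation on $\Hom(\wedge^2 \heis, \fg)$ via
\[
(E_x \cdot \phi)(u,v) = E_x \cdot \phi(u,v) - \phi(E_x \cdot u, v) - \phi(u, E_x \cdot v),
\]
a short direct check on grade-homogeneous $\phi$ shows that on the grade-$\ell$ subspace $\Hom(\wedge^2 \heis, \fg)_{\ell}$ the action of $E_x$ is multiplication by $-\ell\, c_x$. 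Via the associated bundle construction, this is precisely the infinitesimal isotropy action of $\vec{v}_x$ on the fiber of $\sP \times_{G_0} \Hom(\wedge^2 \heis, \fg)_{\ell}$ over $x$.

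Next I would iterate Theorem~\ref{t.HL}. Since $\tau_1$ is canonically determined by $\sP$ and $\vec{v}_x$ is a symmetry, the final ``moreover'' clause of that theorem says that the isotropy action annihilates $\tau_1(x)$; combined with the scalar $-c_x \neq 0$ from the previous step, this forces $\tau_1(x) = 0$ for every $x \in U$. By holomorphy, $\tau_1$ vanishes on every connected component of $X$ that meets $U$. Part~(b) of Theorem~\ref{t.HL} then makes $\tau_2$ canonically defined and likewise annihilated by $\vec{v}_x$; since the corresponding eigenvalue $-2 c_x$ is also nonzero we obtain $\tau_2 \equiv 0$, and one more iteration handles $\tau_3$. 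Because $\dim \fg_{-2} = 1$, only the grades $\ell = 1,2,3$ are relevant (as noted after Theorem~\ref{t.HL}), so all obstructions vanish, and part~(c) of Theorem~\ref{t.HL} yields local flatness.

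The only nontrivial step is the weight computation in the second paragraph, and I expect it to be essentially routine once one recognises $E_x$ as a multiple of the grading element of $\heis$. Conceptually the entire argument says: a nontrivial scalar isotropy uniformly rescales every positively graded tensor naturally associated with the contact $G_0$-structure, and invariance under this symmetry then forces every such tensor to vanish.
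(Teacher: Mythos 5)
Your proposal is correct and follows essentially the same route as the paper: identify the isotropy image of $\vec{v}_x$ with a multiple of the grading element, deduce that it acts by $-\ell c_x$ on $\Hom(\wedge^2\heis,\fg)_\ell$, and conclude from the annihilation of $\tau_\ell$ that all obstructions vanish, so Theorem \ref{t.HL}(c) applies. The paper's proof is just a terser version of this; your explicit induction on $\ell$ (using parts (a) and (b) to justify that each successive $\tau_{\ell+1}$ is defined and annihilated) and the holomorphy remark are reasonable elaborations of steps the paper leaves implicit.
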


 \begin{proof} Let $x \in U$.  By hypothesis, $\vec{v}_x \in \aut(\sP)^0_x$ acts by $c_x$ on $H_x$ (of degree $-1$), so it acts by $-\ell c_x$ on $\Hom(\wedge^2 \heis, \fg)_{\ell}$ for each $\ell \geq 1$.  Since it annihilates $\tau_{\ell}$ in Theorem \ref{t.HL}, then $\tau_{\ell}(x) =0$.  Theorem \ref{t.HL}(c) now yields the result.
 \end{proof}

%

\section{Symmetries of Legendrian cone structures}\label{s.cone}

We henceforth specialize to a 5-dimensional contact manifold $(X,H)$, and let $\psi: \BP H \to X$ be the associated $\BP^3$-bundle on $X$.  We define the notion of a Legendrian cone structure $(X,H,\sC)$, in particular a $Z$-isotrivial cone structure associated to a Legendrian curve $Z \subset \BP V \cong \BP^3$.

 \begin{definition}\label{d.cone}
 A complex submanifold $\sC \subset \BP H$ of dimension $6$ is a {\sl Legendrian cone structure} on $X$ if:
 \begin{itemize}
 \item[(i)] the restriction $\pi:= \psi|_{\sC}: \sC \to X$ is submersive at each point of $\sC$;
 \item[(ii)] for each $x \in X$, the fiber $\sC_x:= \pi^{-1}(x) $ is a Legendrian curve with respect to the symplectic form $\omega_x$ on $H_x$.
 \end{itemize}
 We denote it by the triple $(X,H,\sC)$ or simply by $\sC$. It is {\sl nondegenerate} if $\sC_x$ is nondegenerate for a general point $x \in X$.
\end{definition}

\begin{definition}\label{d.isotrivial} Let $(X,H,\sC)$ be a Legendrian cone structure.
\begin{itemize}
 \item[(i)] Given a symplectic vector space $(V, \sigma)$ and a Legendrian curve $Z \subset \BP V$, we say that $\sC \subset \BP H$ is
$Z$-{\sl isotrivial} if there exists a nonempty open subset $U \subset X$ and an open subset $\sC^U \subset \sC \cap \pi^{-1}(U)$ such that for each $x \in U$, the fiber $\sC^U_x := \sC^U \cap \pi^{-1}(x) \subset \BP H_x$ is isomorphic to $Z \subset \BP V$ under a symplectic isomorphism $(H_x, \omega_x) \stackrel{\cong}{\longrightarrow} (V, \sigma)$.
\item[(ii)] We say that $\sC \subset \BP H$ is {\sl isotrivial} if it is $Z$-isotrivial for some  Legendrian curve $Z \subset \BP V$. \end{itemize}
\end{definition}

\begin{definition}\label{d.flatcone}
Let $(\Heis, \sH)$ be the Heisenberg contact structure.
\begin{itemize}
\item[(i)] Identifying $\sH_o = V$, a Legendrian curve $Z \subset \BP V$ determines a left-invariant fiber subbundle $\sC^Z \subset \BP \sH$, which is the {\sl flat $Z$-isotrivial Legendrian cone structure} $(\Heis, \sH, \sC^Z)$.
\item[(ii)]
A $Z$-isotrivial Legendrian cone structure $\sC \subset \BP H$ on a contact manifold $(X, H)$ is {\sl locally flat} if there exists an open subset $U \subset X$ and a biholomorphic map $\varphi: U \to U' \subset \Heis$ such that the differential ${\rm d} \varphi: \BP TU \to \BP TU'$ sends $\sC|_U \subset \BP H|_U$ to $\sC^Z|_{U'} \subset \BP \sH|_{U'}.$ \end{itemize} \end{definition}

\begin{definition}\label{d.aut}
Let $\sC \subset \BP H$ be a Legendrian cone structure on a contact manifold $(X, H)$.  For a holomorphic vector field $\vec{v}$  on an open subset $U \subset X$, let $\check{v}$ be its canonical lift to $\BP TX|_U$.
 \begin{itemize}
 \item[(i)] $\vec{v}$ is a {\sl contact vector field} if $\check{v}$ is tangent to $\BP H$.
 \item[(ii)] A contact vector field $\vec{v}$  is an {\sl infinitesimal automorphism} of $\sC \subset \BP H$ if $\check{v}$ is tangent to $\sC \cap \BP H|_U.$
 \item[(iii)] For  $x \in U$, denote by $\aut(\sC)_x$  the Lie algebra of germs of infinitesimal automorphisms of $\sC \subset \BP H $ in some neighborhoods of $x$ and by $\aut(\sC)^0_x$ the subalgebra of elements of $\aut(\sC)_x$ that vanish at $x$.
 \item[(iv)] $\sC$ is {\sl transitive at} $x \in X$ if  $\aut(\sC)_x$ generates $T_x X$. If moreover $\aut(\sC)_x^0 \neq 0$, then $\sC$ is {\sl  multiply-transitive at} $x \in X$.
 \item[(v)] $\sC$ is {\sl transitive} (resp.\ {\sl multiply-transitive}) if it is transitive (resp.\  multiply-transitive) at some (hence a general)  point $x  \in X$. We say that $\sC$ is {\sl nontransitive} if it is not transitive.
\end{itemize} \end{definition}

 Here is an explicit description of $(\Heis,\sH,\sC^Z)$.

 \begin{proposition} \label{p.Heis}
For a germ of Legendrian curves $Z \subset \BP^3$, let $\sC^Z \subset \BP \sH$ be the flat $Z$-isotrivial Legendrian cone structure on the 5-dimensional Heisenberg group $\Heis$ from Definition \ref{d.flatcone}.   Let $o \in \Heis$ be the identity element and let $\sC^Z_o \subset \BP \sH_o$ be the fiber of $\sC^Z$ at $o$.    Then there is a linear coordinate system $(x_1, \ldots, x_5)$ on $\Heis$ with the following properties.  \begin{itemize}
 \item[(i)] the Heisenberg group multiplication $(x_1, \ldots, x_5) \circ (\widetilde{x}_1, \ldots, \widetilde{x}_5) $ of two points $(x_1, \ldots, x_5), (\widetilde{x}_1, \ldots, \widetilde{x}_5)$ is $$(x_1+ \widetilde{x}_1, \ldots, x_4 + \widetilde{x}_4, x_5 + \widetilde{x}_5 + \frac{x_1\widetilde{x}_3 - x_3 \widetilde{x}_1 + x_2 \widetilde{x}_4 - x_4 \widetilde{x}_2}{2}).$$
 \item[(ii)] The contact structure $\sH$ is given by the left-invariant form
 \[
 x_1 {\rm d} x_3 - x_3 {\rm d} x_1 + x_2 {\rm d} x_4 - x_4 {\rm d} x_2 - 2 \ {\rm d} x_5.
 \]
 \item[(iii)] The submanifold $\sC^{Z} $ is precisely tangent directions of affine lines on $\Heis$ in the coordinates $(x_1, \ldots, x_5)$ that are  left translates by $\Heis$ of the affine lines through $o$ in the direction of $\sC^Z_o \subset \BP \sH_o :$
 \[
 \{ (t w_1, \ldots, t w_4, 0) \mid t \in \C, [w_1, \ldots, w_4] \in \sC^Z_o \subset \BP \sH_o\}.
 \]
 \item[(iv)] The local automorphism group of the cone structure $\sC^{Z}$ includes the left translation by $\Heis$ as well as the weighted $\C^\times$-action on $\Heis$ given by $$(x_1, \ldots, x_4, x_5) \mapsto (t x_1, \ldots, t x_4, t^2 x_5) \mbox{ for } t \in \C^{\times}.$$ \end{itemize}
 \end{proposition}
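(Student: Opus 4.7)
The plan is to set up explicit exponential coordinates on $\Heis$ realizing the abstract Heisenberg algebra, and then verify each assertion by direct computation. I would fix a symplectic basis $e_1, e_2, e_3, e_4$ of $V = \heis_{-1}$ with $\sigma(e_1, e_3) = \sigma(e_2, e_4) = 1$ and other pairings zero, together with a basis vector $e_5 \in \heis_{-2}$, so that the only nonzero basis brackets are $[e_1, e_3] = [e_2, e_4] = e_5$. Since $\heis$ is $2$-step nilpotent, the Baker--Campbell--Hausdorff formula truncates to $\exp(A)\exp(B) = \exp(A + B + \tfrac12[A,B])$; writing $\exp(\sum x_i e_i) \leftrightarrow (x_1, \ldots, x_5)$ then yields the multiplication formula in (i) at once.

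For (ii), I would extend $dx_5|_o$ to a left-invariant $1$-form $\theta$ by pulling back along $L_{g^{-1}}$ using (i); a short computation gives $\theta|_g = dx_5 + \tfrac12(g_3\,dx_1 + g_4\,dx_2 - g_1\,dx_3 - g_2\,dx_4)$, and the form stated in (ii) is exactly $-2\theta$. Its kernel contains $\partial_{x_1}, \ldots, \partial_{x_4}$ at $o$, so by left-invariance it coincides with $\sH$.

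For (iii), the model line $\ell_w(t) = (tw_1, \ldots, tw_4, 0)$ has constant tangent vector $(w_1, \ldots, w_4, 0)$, which is annihilated by the contact form at every point along $\ell_w$ by antisymmetry. Using (i), $L_g(\ell_w(t))$ is again an affine line in the $x$-coordinates, with constant tangent vector picking up a fifth-component correction linear in $(g, w)$. The key step is to verify that this tangent direction at an arbitrary point $p$ on the translated line pulls back under $dL_{p^{-1}}$ to the original direction $[w_1:\cdots:w_4] \in \sC^Z_o$: the fifth-component correction in $dL_{p^{-1}}$ cancels that of the tangent vector (the $t_0$-dependent cross terms vanish by antisymmetry), so the left translates of the $\ell_w$ are exactly the affine lines described in (iii).

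For (iv), direct substitution into (i) shows that $\phi_t(x_1, \ldots, x_5) := (tx_1, \ldots, tx_4, t^2 x_5)$ is a group automorphism, since the quadratic correction in the multiplication scales by $t^2$. Its differential at $o$ acts on $\sH_o = V$ by the scalar $t$, hence induces the identity on $\BP V$ and preserves $\sC^Z_o$; combined with the conjugation identity $\phi_t \circ L_g = L_{\phi_t(g)} \circ \phi_t$ and the left-invariance of $\sC^Z$, this forces $\phi_t$ to preserve $\sC^Z$ globally. The main obstacle I anticipate is purely bookkeeping—consistent sign conventions for $\sigma$ and for the factor $-2$ in (ii)—since no deep ingredient enters: the entire construction is left-invariant and thus determined by linear algebra on $V$ at the origin.
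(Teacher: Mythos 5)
Your argument is correct, but it takes a genuinely different route from the paper: the paper's proof of Proposition \ref{p.Heis} is a one-line citation, importing (i) and (ii) from Lemma 5.2 of \cite{HMSJ} and (iii) from Definitions 4.2 and 4.4 of \cite{HMSJ}, with (iv) declared obvious from (iii). You instead rebuild everything from scratch in exponential coordinates: the truncated Baker--Campbell--Hausdorff formula for the $2$-step nilpotent $\heis$ gives (i) immediately, the left-invariant extension of $dx_5|_o$ recovers the form in (ii) up to the factor $-2$, and the key cancellation in (iii) --- that the $t_0$-dependent contribution to $dL_{p}$ applied to $(w_1,\ldots,w_4,0)$ vanishes by antisymmetry of the cocycle, so the constant tangent vector of the translated line really is the left-translate of $[w]$ at every point of the line --- is exactly the computation that makes the statement work. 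Your (iv) is also cleaner than "obvious from (iii)": checking that $\phi_t$ is a group automorphism whose differential at $o$ is scalar on $V$, then invoking $\phi_t \circ L_g = L_{\phi_t(g)} \circ \phi_t$ and left-invariance of $\sC^Z$, is the right conceptual reason. The only point worth adding is the remark that $\phi_t$ preserves the contact distribution $\sH$ itself (immediate, since $d\phi_t|_o$ preserves $V$ and $\phi_t$ is an automorphism), which is needed for $\phi_t$ to qualify as an automorphism of the cone structure in the sense of Definition \ref{d.aut}. What your approach buys is self-containedness and an explicit verification the reader can follow without consulting \cite{HMSJ}; what the paper's approach buys is brevity and guaranteed consistency of sign and normalization conventions with the construction it cites.
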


 \begin{proof}
 (i) and (ii) are from Lemma 5.2 of \cite{HMSJ},  (iii) is from Definitions 4.2 and 4.4 of \cite{HMSJ}, and (iv) is obvious from (iii).
 \end{proof}

We omit the easy proof of the following Lemma.

\begin{lemma}\label{l.Z-iso}
 Let  $Z \subset \BP V \cong \BP^3$ be a Legendrian curve and let $G_0$ be the identity component of $ \Aut(Z) \subset \CSp(V)$.
 \begin{itemize}
 \item[(i)] Let $(X,H,\sC)$ be a $Z$-isotrivial Legendrian cone structure. For each $x \in X$, define $\sP_x \subset \Fr_x(X, H)$ as a connected component of the set of contact frames $f: V \to H_x$ that send $Z \subset \BP V $ to $\sC_x \subset \BP H_x$. Then $\sP = \bigcup_{x \in X} \sP_x$ is a contact $G_0$-structure on $(X, H)$.
\item[(ii)] Let $\sP \subset \Fr(X, H)$ be a contact $G_0$-structure. For each $x \in X$, define $\sC_x \subset \BP H_x$ as the image $f(Z)$ for any $f \in \sP_x$, which does not depend on the choice of $f \in \sP_x$. Then $\sC = \bigcup_{x \in X} \sC_x \subset \BP H$ is a $Z$-isotrivial Legendrian cone structure on $X$.
    \item[(iii)] In (i) or (ii), we have $\aut(\sC)_x = \aut(\sP)_x$ for any $x \in X$.
    \item[(iv)] In (i) or (ii), the contact $G_0$-structure $\sP$ is locally flat if and only if the $Z$-isotrivial Legendrian cone structure $\sC$ is locally flat. \end{itemize} \end{lemma}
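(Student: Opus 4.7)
The plan is to observe that each assertion unwinds directly from the definitions, with $G_0 = \Aut(Z)^\circ$ serving as the (identity component of the) stabilizer of $Z \subset \BP V$ inside $\CSp(V)$. No part requires genuine new ideas; the author's ``easy proof'' remark is apt.

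For (i), I would first note that $\sP_x$ is nonempty: $Z$-isotriviality supplies a symplectic isomorphism $(H_x,\omega_x) \to (V,\sigma)$ sending $\sC_x$ to $Z$, whose inverse lies in $\sP_x$. The set of all contact frames $f : V \to H_x$ with $f(Z) = \sC_x$ is a single right coset $f_0 \cdot \Aut(Z)$ inside $\Fr_x(X,H)$, so each of its connected components is a torsor under $G_0$. Local holomorphic triviality of $\sP \to X$ then descends from the complex submanifold structure of $\sC \subset \BP H$: any local holomorphic symplectic frame of $H$ that identifies $\sC$ with $Z$ yields a local holomorphic section of $\sP$ after selecting the appropriate connected component in a simply connected chart.

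For (ii), the key point is well-definedness of $\sC_x := f(Z)$: if $f_1, f_2 \in \sP_x$, then $f_2 = f_1 \cdot g$ for some $g \in G_0 \subset \Aut(Z)$, so $f_2(Z) = f_1(g(Z)) = f_1(Z)$. The complex submanifold structure of $\sC$ is inherited from that of $\sP$ via the evaluation map $\sP \to \BP H$, $f \mapsto f(Z)$, and $Z$-isotriviality holds tautologically. For (iii), the canonical lifts $\vec{v}'$ on $\Fr(X,H)$ and $\check v$ on $\BP H$ of a contact vector field $\vec v$ are intertwined by $f \mapsto f(Z)$; hence $\vec{v}'$ is tangent to $\sP$ precisely when $\check v$ is tangent to $\sC$, yielding $\aut(\sC)_x = \aut(\sP)_x$.

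For (iv), both constructions (i) and (ii) are functorial under biholomorphisms of contact manifolds and, when applied to the Heisenberg model $(\Heis, \sH)$, match the flat contact $G_0$-structure $\sP^{G_0}$ with the flat isotrivial cone structure $\sC^Z$. Therefore a local biholomorphism $\varphi : U \to \varphi(U) \subset \Heis$ realizing one form of local flatness simultaneously realizes the other. The main (mild) point to watch is the holomorphic choice of connected component in (i), which is handled by working on contractible local charts.
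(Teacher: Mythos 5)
Your proof is correct. The paper explicitly omits the proof of this lemma as ``easy,'' and your definitional unwinding --- the frames sending $Z$ to $\sC_x$ forming a single right $\Aut(Z)$-coset whose components are $G_0$-torsors, well-definedness of $f(Z)$ via $G_0 \subset \Aut(Z)$ for (ii), the intertwining of the canonical lifts $\vec{v}'$ and $\check{v}$ for (iii), and functoriality plus the matching of $\sP^{G_0}$ with $\sC^Z$ on the Heisenberg model for (iv) --- is exactly the argument the authors intend.
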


 \begin{proposition}\label{p.flat}
Let $(X,H,\sC)$ be a $Z$-isotrivial Legendrian cone structure, where $Z \subset \BP V$ is a nondegenerate Legendrian curve whose germ is distinct from a germ of the rational normal curve $\mathbf{Z} \subset \BP \Sym^3 W$. Then:
 \begin{itemize}
 \item[(i)] $\dim \aut(\sC)_x \leq 5 + \dim \finf{Z}$ for any $x \in X$.
 \item[(ii)] $\sC$ is locally flat if and only if $\dim \aut(\sC)_x = 5 + \dim \finf{Z}$ for any $x \in X$.
 \item[(iii)] Suppose that $Z \subset \BP V$ is not homogeneous and $0\neq \aut(\sC)_x^0$ for a general $x \in X$.  Then $\sC \subset \BP H$ is locally flat and $\aut(\sC)_x \cong \heis \rtimes \finf{Z}$ for a general $x \in X$.
 \item[(iv)] Suppose that $Z \subset \BP V$ is homogeneous. If
 \begin{align*}
 0 < \dim \aut(\sC)^0_x < \dim \finf{Z}
 \end{align*} for a general $x \in X$, then there is an open subset $U \subset X$ such that  the image of $\mathbf{jet}_x^H$ acts nontrivially on $Z$ for  any $x  \in U$.
 \end{itemize}
 \end{proposition}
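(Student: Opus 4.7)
The starting point is to pass from the cone structure $\sC$ to an underlying contact $G_0$-structure. Since the germ of $Z$ is distinct from that of the rational normal curve, Theorem \ref{t.Abel} guarantees that $\fg_0 := \finf{Z}$ admits no nonzero contact prolongation, so $\pr(\heis,\fg_0) \cong \heis \oplus \fg_0$. Lemma \ref{l.Z-iso} then attaches to $\sC$ a contact $G_0$-structure $\sP \subset \Fr(X,H)$ satisfying $\aut(\sC)_x = \aut(\sP)_x$ for each $x$, with $\sC$ locally flat iff $\sP$ is. Part (i) is immediate from Corollary \ref{c.Tanaka}(ii), which yields $\dim \aut(\sP)_x \leq \dim \sP = 5 + \dim \finf{Z}$.

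For (ii), the ``only if'' direction combines (i) with the observation that the flat model $(\Heis,\sH,\sC^Z)$ already exhibits $5 + \dim \finf{Z}$ symmetries (Heisenberg translations together with the $\fg_0$-action on frames at the identity), so equality holds. For the ``if'' direction, equality forces $\dim \aut(\sP)^0_x = \dim \fg_0$; injectivity of ${\bf jet}^H_x$ (Corollary \ref{c.Tanaka}(i)) with image contained in $\fg_0$ then makes it an isomorphism onto $\fg_0$.  Since $\id_V \in \fg_0$, I can select $\vec{v}_x \in \aut(\sP)^0_x$ with ${\bf jet}^H_x(\vec{v}_x) = \id_{H_x}$ at every $x$, and Proposition \ref{p.HL} delivers local flatness.

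Part (iii) is the cleanest case: nonhomogeneity of $Z$ forces $\fg_0 = \C\, \id_V$, so any nonzero isotropy element $\vec{v}_x \in \aut(\sP)^0_x$ automatically satisfies ${\bf jet}^H_x(\vec{v}_x) = c_x\, \id_{H_x}$ with $c_x \neq 0$ by injectivity, triggering Proposition \ref{p.HL}. Combining local flatness, the upper bound from (i), and the lower bound furnished by the flat model then yields $\aut(\sC)_x \cong \heis \rtimes \finf{Z}$ at a general $x$.

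For (iv), the decisive observation is that the natural $\fg_0$-action on $Z$ has kernel exactly $\C\, \id_V$: if $u \in \fg_0$ preserves every line $\widehat{z} \subset \widehat{Z}$, then $u(z) = \lambda(z) z$ along $\widehat{Z}$, and since $\widehat{Z}$ spans $V$ by nondegeneracy, a short argument (cross-checking against the explicit homogeneous models in Theorem \ref{T:homZ}) forces $u$ to be scalar. Let $I_x \subset \fg_0$ denote the image of $\aut(\sC)^0_x$ under ${\bf jet}^H_x$, which is nonzero by hypothesis and injectivity. Suppose for contradiction that $I_x$ acts trivially on $Z$ on some open subset $U' \subseteq X$. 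Then $I_x = \C\, \id_V$ there, producing $\vec{v}_x \in \aut(\sC)^0_x$ with ${\bf jet}^H_x(\vec{v}_x) = \id_{H_x}$ for each $x \in U'$; Proposition \ref{p.HL} would give local flatness on $U'$, and then (ii) would force $\dim \aut(\sC)^0_x = \dim \finf{Z}$, contradicting $\dim \aut(\sC)^0_x < \dim \finf{Z}$. Hence the set where $I_x$ acts trivially on $Z$ has empty interior, and upper semicontinuity of the isotropy dimension together with continuity of $x \mapsto I_x$ upgrades the complement to an open subset $U$ with the desired property. The main obstacle is the kernel-of-action computation in (iv); the remainder of the proof is a direct assembly of Corollary \ref{c.Tanaka}, Proposition \ref{p.HL}, and part (ii).
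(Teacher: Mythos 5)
Your treatment of (i)--(iii) is correct and follows the paper's route essentially verbatim: Theorem \ref{t.Abel} rules out contact prolongations, Lemma \ref{l.Z-iso} converts $\sC$ into a contact $G_0$-structure, Corollary \ref{c.Tanaka} gives the dimension bound and the injectivity of ${\bf jet}^H_x$, and Proposition \ref{p.HL} is triggered exactly as in the paper for (ii) and (iii), the key point in both being that the image of ${\bf jet}^H_x$ contains (and in the nonhomogeneous case equals) $\C\,\id_{H_x}$.

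The gap is in (iv), and it is a gap of logical structure rather than of ingredients. The negation of the conclusion is \emph{not} ``the image acts trivially on $\sC_x$ for all $x$ in some open set''; it is ``the set $T$ of points where the image acts trivially is dense''. Your contradiction argument (which is fine in itself, and is the same Proposition \ref{p.HL} plus part (ii) mechanism the paper uses) therefore only shows that $T$ has empty interior, i.e.\ that the nontriviality locus is dense --- not that it contains an open set. You acknowledge this and try to close the gap by invoking ``upper semicontinuity of the isotropy dimension'' and ``continuity of $x \mapsto I_x$'', but neither is established anywhere and neither holds in the needed generality: for geometries governed by an absolute parallelism, germs of symmetries at $x_0$ restrict injectively to all nearby points, so $x \mapsto \dim \aut(\sP)_x$ is \emph{lower} semicontinuous, the isotropy dimension is a difference of two lower semicontinuous functions, and the subspace $I_x \subset \End(H_x)$ can jump where these dimensions change. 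What is actually needed is that $T$ is closed. The paper negates the conclusion correctly (triviality on a dense subset), extends by continuity to triviality on all of $X$ --- which is exactly the closedness of $T$ --- and only then applies Proposition \ref{p.HL} on the dense open set where $\aut(\sC)^0_x \neq 0$ to contradict part (ii). Restructure (iv) this way; your kernel computation (an element of $\fcsp(V)$ fixing every point of a nondegenerate $Z$ is scalar, since $\widehat{Z}\setminus 0$ is connected and spans $V$) is correct and is precisely the step the paper also relies on.
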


\begin{proof}
By Theorem \ref{t.Abel} and Lemma \ref{l.Z-iso}, (i) is a direct consequence of Corollary \ref{c.Tanaka}(i).  In (ii), the forward direction is immediate, so assume that $\dim \aut(\sC)_x = 5 + \dim \finf{Z}$.  By Corollary \ref{c.Tanaka}, we have $\dim \aut(\sP)_y= \dim \sP$ for all $y$ in an open subset $U \subset X$, and the injective homomorphism $\mathbf{jet}^H_x: \aut(\sP)^0_x \to \End(H_x)$ has image $\finf{\sC_x}$, which contains $\C\, {\rm Id}_{H_x}$. Thus, the condition for Proposition \ref{p.HL} is satisfied and $\sC$ is locally flat, so (ii) is proven.  Proposition \ref{p.HL} similarly establishes (iii).

To prove (iv),  assume the contrary that the image of $\mathbf{jet}_x^H: \aut(\sP)^0_x \to \End(H_x)$ acts trivially on $\sC_x$ for all $x$ in a dense subset of $X$. Then by continuity, it acts trivially on $\sC_x$ for all $x\in X$.  Since $Z$ is nondegenerate in $\BP V$, then the image of $\mathbf{jet}_x^H$ is contained in $\C\, {\rm Id}_V \subset \finf{Z}$ for all $x \in X$. Thus by Proposition \ref{p.HL} and Lemma \ref{l.Z-iso}, the Legendrian cone structure must be locally flat. Then $\dim (\sC)^0_x = \dim \finf{Z}$ by (ii), a contradiction to the assumption $\dim \aut(\sC)^0_x < \dim \finf{Z}$.
\end{proof}

\section{Legendrian cone structures of VMRT-type and the canonical double fibration} \label{S:VMRT}

\begin{definition}\label{d.line}
Let $X$ be a complex manifold of dimension 5 with a contact structure $H \subset TX$.
A nonsingular rational curve $\BP^1 \cong C \subset X$ is a {\sl line} if its normal bundle $N_C$ is isomorphic to  $\sO(1) \oplus \sO^{\oplus 3}$. It is easy to see that lines are tangent to $H$. \end{definition}

\begin{lemma}\label{l.line}
For  a complex manifold $X$ of dimension $5$ with a contact structure $H \subset TX$,
let $\sL$ be the quotient line bundle $TX/H$.
A smooth rational curve $C \subset X$ is a line if and only if
\begin{itemize} \item[(i)] deformations of $C$ cover an open subset in $X$; and \item[(ii)] $C$ is of degree 1 with respect to $\sL,$ namely, $\sL|_C \cong \sO(1).$   \end{itemize} Moreover, a line $C \subset X$ is tangent to the contact distribution $H$. \end{lemma}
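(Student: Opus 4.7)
The plan is to use one numerical identity coming from the contact structure to translate hypotheses on $N_C$ into hypotheses on $\sL|_C$ (and conversely), and then to upgrade a generic surjectivity statement for the evaluation map into a splitting type statement for $N_C$.

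The first step I would carry out is to establish $\det TX \cong \sL^{3}$. Since the Levi bracket gives a nondegenerate $\sL$-valued symplectic form $\wedge^2 H \to \sL$, the top exterior power satisfies $\det H = \wedge^4 H \cong \sL^{2}$; then from $0 \to H \to TX \to \sL \to 0$ one gets $\det TX = \det H \otimes \sL \cong \sL^{3}$. Combined with $C \cong \bbP^1$ (so $T_C \cong \sO(2)$), this turns any degree computation for $N_C$ into one for $\sL|_C$, and conversely.

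For the forward direction, I would assume $N_C \cong \sO(1) \oplus \sO^{\oplus 3}$. Then $\deg TX|_C = \deg T_C + \deg N_C = 3$, and $3 \deg \sL|_C = \deg \sL^{3}|_C = 3$ forces $\sL|_C \cong \sO(1)$, which is (ii). The composition $T_C \hookrightarrow TX|_C \to \sL|_C$ is a map $\sO(2) \to \sO(1)$ and must be zero; hence $T_C \subset H|_C$, giving the ``moreover'' clause (tangency). Finally, $N_C$ is globally generated with $h^0(N_C) = 2 + 3 = 5 = \dim X$, so the evaluation map $H^0(N_C) \to N_C|_x$ is surjective at every $x \in C$, whence the universal family over the component of $[C]$ in $\RatCurves(X)$ dominates $X$, establishing (i).

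For the backward direction, (ii) again makes $T_C = \sO(2) \to \sL|_C = \sO(1)$ automatically zero, so $C$ is tangent to $H$ (this gives ``moreover'' once and for all), and consequently $0 \to T_C \to TX|_C \to N_C \to 0$ is exact. Thus $\deg N_C = \deg TX|_C - 2 = 3 - 2 = 1$. Writing $N_C \cong \bigoplus_{i=1}^{4} \sO(a_i)$ with $a_1 \geq \cdots \geq a_4$ and $\sum a_i = 1$, I would use (i) as follows: at a general pair $(x, [C])$ of the incidence variety, the differential of projection to $X$ has image $T_x C + \mathrm{Im}(H^0(N_C) \to N_C|_x)$, and dominance forces the evaluation map to be surjective at a general $x \in C$. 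Any summand $\sO(a_i)$ with $a_i < 0$ has no global sections, so it can never lie in the image of evaluation; hence $a_i \geq 0$ for all $i$. The only sorted non-negative integer solution to $\sum a_i = 1$ is $(1,0,0,0)$, so $N_C \cong \sO(1) \oplus \sO^{\oplus 3}$ and $C$ is a line.

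The main obstacle is the deformation-theoretic implication in the backward direction: one must specify precisely what ``deformations cover an open subset'' means (presumably, an irreducible component of $\RatCurves(X)$ through $[C]$ whose universal family dominates $X$) and then identify its first-order geometry at $(x, [C])$ with the evaluation map $H^0(N_C) \to N_C|_x$. All other steps are short and purely numerical.
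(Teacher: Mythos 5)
Your proposal is correct and follows essentially the same route as the paper: both reduce (i) to semipositivity of $N_C$ via deformation theory, use $\det TX \cong \sL^{3}$ to convert (ii) into $\det N_C \cong \sO(1)$, and deduce the splitting type and tangency by the same numerical bookkeeping (including the observation that there is no nonzero map $\sO(2)\to\sO(1)$). The only difference is that you unpack the two facts the paper simply cites (the semipositivity criterion from \cite{HM99} and $\det TX = \sL^{\otimes(m+1)}$ from \cite{LB}), which is harmless.
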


\begin{proof}
Recall from the deformation theory of rational curves (see Section 1.1 of \cite{HM99}), that deformations of a smooth rational curve $C \subset X$ in a complex manifold covers an open subset if and only if its normal bundle $N_C$ is semipositive, namely, $$N_C \cong \sO(a_1) \oplus \cdots \oplus \sO(a_{\dim X -1})$$ for some nonnegative integers $a_1 \geq \cdots \geq a_{\dim X -1} \geq 0$.
Also recall (for example, from (2.2) of \cite{LB}) that on a complex manifold $X$ of dimension $2m+1$ with a contact structure $H \subset TX$,
the quotient line bundle  $\sL := TX/H$  satisfies ${\rm det}\, TX = \sL^{\otimes (m+1)}.$

  If $C \subset X$ is a line in a $5$-dimensional contact manifold, it satisfies (i) because $N_C$ is semipositive. Moreover,
  \[
  TX|_C \cong TC \oplus N_C \cong \sO(2) \oplus \sO(1) \oplus \sO^{\oplus 3}.
  \]
  Thus  $\det TX|_C = \sO(3) = \sL^{m+1}|_C$ with $m=2$ implies  that $\sL|_C$  is $\sO(1),$  namely, it satisfies (ii).  Conversely, suppose that $C \subset X$ is a nonsingular rational curve satisfying (i) and (ii).  The semipositivity of $N_C$ says  $N_C \cong \sO(a_1) \oplus \cdots \oplus \sO(a_4)$ with $a_1 \geq \cdots \geq a_4 \geq 0$. (ii) implies that $\det N_C \cong \sO(1)$. Consequently, $a_1 =1, a_2=a_3 = a_4 =0$, and $C \subset X$ must be a line in the sense of Definition \ref{d.line}.

For a line $C \subset X$,  the quotient map $$TX|_C = TC \oplus N_C \to (TX/H)|_C = \sL|_C  \cong \sO(1)$$ annihilates $TC  \cong \sO(2)$ because there is no nonzero holomorphic map from $\sO(2)$ to $\sO(1)$. Thus $TC \subset H|_C$, namely,   $C \subset X$ should be tangent to the contact distribution.
\end{proof}

\begin{definition}\label{d.vmrt}
Let $X$ be a complex manifold of dimension 5 with a contact structure $H \subset TX$.
A Legendrian cone structure $\sC \subset \BP H$  is {\sl of VMRT-type} if there exists a 5-dimensional complex manifold $Y$ with a $\BP^1$-fibration $\rho: \sC \to Y$ with the following properties. Let  $\mu: \sC \to X$ be the projection given by $\BP H \to X$.
\begin{itemize} \item[(i)] For each $y \in Y$, the projection $\mu$ sends the fiber $\rho^{-1}(y)$  biholomorphically to a  line $C_y:= \mu(\rho^{-1}(y)) \subset X.$
 \item[(ii)] For each $y \in Y$ and $x \in C_y$, the point $x^{\sharp} \in \rho^{-1}(y) \subset \BP T_x X$ satisfying $\mu(x^{\sharp}) = x$ coincides with the tangent direction $\BP T_x C_y \in \BP T_x X$ of the line $C_y$ at $x$.
    \end{itemize}
    \end{definition}

The following is a reformulation of  Lemma 3.3 from \cite{HLNa}.

    \begin{proposition}\label{p.vmrt}
    Let $(X, H)$ be a contact manifold of dimension $5.$  Let ${\rm Douady}(X)$ be the Douady space of all compact complex submanifolds of $X$. Then the subset ${\rm Lines}(X) \subset {\rm Douady}(X)$ of all lines on $X$ is a nonsingular open subset of ${\rm Douady}(X)$. For each line $C \subset X$,  there exists  an open neighborhood $Y \subset {\rm Lines}(X)$ of the point $[C] \in {\rm Lines}(X)$ such that the universal family  ${\rm Univ}_Y$ with the universal $\BP^1$-bundle map ${\rm Univ}_Y \to Y$ can be embedded as a submanifold $\sC \subset \BP H$ and  the induced morphism $\rho: \sC \to Y$ provides $\sC$ with a Legendrian cone structure of VMRT-type. Conversely, any Legendrian cone structure of VMRT-type arises this way.  \end{proposition}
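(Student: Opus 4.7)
My plan is to realize $\sC$ as the image of the tangent direction map applied to the universal family of lines, with the forward direction following from deformation theory and the converse from the universal property of the Douady space.

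For the openness and smoothness, I would invoke Kodaira's theorem on deformations of compact submanifolds: since $N_C \cong \sO(1) \oplus \sO^{\oplus 3}$ has $H^1(C, N_C) = 0$, the Douady space is smooth at $[C]$ of dimension $h^0(N_C) = 5$. Openness of ${\rm Lines}(X) \subset {\rm Douady}(X)$ follows from constancy of $\deg N_{C'} = C' \cdot \sL$ on connected families together with upper semicontinuity of $h^1(N_{C'}(-1))$, which together pin down the splitting type $N_{C'} \cong \sO(1) \oplus \sO^{\oplus 3}$. On a sufficiently small neighborhood $Y$ of $[C]$, I would consider the universal family $p: {\rm Univ}_Y \to Y$ (a $\BP^1$-bundle of total dimension $6$) together with its evaluation map $e: {\rm Univ}_Y \to X$, and define the tangent direction map $\tau: {\rm Univ}_Y \to \BP TX$ sending $(y, q) \mapsto \BP T_q C_y$. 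By Lemma \ref{l.line}, $\tau$ factors through $\BP H$. To show $\tau$ embeds onto a submanifold $\sC \subset \BP H$, I would check immersivity via $H^0(N_C(-2)) = 0$ (all summands have degree $\leq -1$ after twisting), which rules out nontrivial first-order deformations with vanishing $1$-jet at a fixed point; on the vertical fibers of $p$, $\tau$ is the Gauss map of the smooth curve $C_y$, automatically an immersion. Injectivity after shrinking $Y$ follows from immersivity via the implicit function theorem. Setting $\rho = p \circ \tau^{-1}: \sC \to Y$ and $\mu = \psi|_\sC$ makes the VMRT-type conditions (i) and (ii) of Definition \ref{d.vmrt} built into the construction.

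The main obstacle is verifying the Legendrian condition of Definition \ref{d.cone}: each fiber $\sC_x = \mu^{-1}(x) \subset \BP H_x$ must be a Legendrian curve. Concretely, at a smooth point $v \in \widehat{\sC_x}$ with $v \in T_x C_y$, the tangent 2-plane to the affine cone $\widehat{\sC_x}$ is spanned by the radial direction $\C v$ and a horizontal direction $w = \nabla_v s|_{q_0}$, where $s \in H^0(N_C(-q_0))$ is a point-fixing first-order deformation and $q_0 \in C$ is the preimage of $x$, so the Legendrian condition reduces to $\omega_x(v, w) = 0$. I would extract this from the exact sequence $0 \to H|_C/TC \to N_C \to L|_C \to 0$ combined with the fact that all deformations inside $Y$ preserve $H$-tangency: this translates into an ODE constraint along $C$ relating the $L|_C \cong \sO(1)$ component of $s$ and the Levi pairing of the $H|_C/TC$ component with $\dot\gamma$. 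Evaluating this constraint at $q_0$ using $s(q_0) = 0$, together with the specific degrees in the splitting, yields the required isotropy $\omega_x(v, w) = 0$. This step is where the precise normal bundle type of lines, rather than merely their being smooth rational and $H$-tangent, is essentially used.

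For the converse, given a VMRT-type Legendrian cone structure $(X, H, \sC)$ with $\rho: \sC \to Y$, condition (i) of Definition \ref{d.vmrt} ensures each $C_y := \mu(\rho^{-1}(y))$ is a line, and the universal property of the Douady space gives a holomorphic classifying map $\kappa: Y \to {\rm Lines}(X)$. Condition (ii) identifies $\sC$ with the image of the tangent direction map applied to $\kappa^* {\rm Univ}$, and combined with the equality $\dim Y = 5 = \dim {\rm Lines}(X)$ at smooth points, this forces $\kappa$ to be an open immersion, completing the identification.
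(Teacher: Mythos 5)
The paper does not actually prove this proposition: it is imported as ``a reformulation of Lemma 3.3 from \cite{HLNa}'', so you are supplying an argument that the paper delegates entirely to the literature. Your route is the standard deformation-theoretic reconstruction and its skeleton is sound: smoothness and $\dim = 5$ of ${\rm Lines}(X)$ from $H^1(C,N_C)=0$ and $h^0(N_C)=5$; openness from constancy of the degree plus semicontinuity pinning down the splitting type; the tangent lift $\tau$ factoring through $\BP H$ by Lemma \ref{l.line}; immersivity from $H^0(N_C(-2))=0$; and the converse via the classifying map to ${\rm Douady}(X)$. Two small points there: injectivity of $\tau$ near the central fibre is not the implicit function theorem but the standard fact that an immersion injective on a compact set is injective on a neighborhood (applied to the central fibre, using properness of ${\rm Univ}_Y \to Y$); and in the converse you should note explicitly that condition (ii) of Definition \ref{d.vmrt} forces the classifying map to be injective (two parameters with the same image line would have the same tangent lift, hence the same $\rho$-fibre) before concluding it is an open immersion from equidimensionality.

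The one place where your write-up, taken literally, does not deliver the conclusion is the Legendrian verification. Writing $\alpha$ for a local contact form, $H$-tangency of every member of the family gives the identity $d\alpha(\tilde s,\dot\gamma) = -\tfrac{d}{dq}\,\alpha(\tilde s)$ along $C$ for the deformation field $\tilde s$. For the point-fixing section $s \in H^0(N_C(-q_0))$, \emph{evaluating} this at $q_0$ yields only $f(q_0)=f'(q_0)=0$ for the $L|_C$-component $f=\alpha(\tilde s)$; since $L|_C\cong\sO(1)$ this forces $f\equiv 0$, i.e.\ the $\sO(1)$-summand of $N_C$ lies inside $H|_C/TC$ (so $H|_C/TC\cong\sO(1)\oplus\sO\oplus\sO(-1)$, not $\sO^{\oplus 3}$). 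The identity then reduces to $\omega(\tilde s,\dot\gamma)\equiv 0$ along $C$, and the required isotropy $\omega_x(v,w)=0$, $w=\nabla_v\tilde s(q_0)$, comes from \emph{differentiating} this identity at $q_0$ (the other terms dying because $\tilde s(q_0)=0$). So the step you describe as ``evaluating the constraint at $q_0$\dots yields the required isotropy'' is really a two-stage argument --- evaluate to kill the $L$-component globally, then differentiate --- and your phrasing conflates the two. With that step expanded, the proof is correct and is presumably close to what \cite{HLNa} does; the benefit of writing it out, as you have, is that it makes visible exactly where the normal bundle type $\sO(1)\oplus\sO^{\oplus 3}$ and the degree $\deg L|_C = 1$ are used.
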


From Proposition \ref{p.vmrt}, when we consider a Legendrian cone structure of VMRT-type, we identify $\rho: \sC \to Y$ with the universal family over an open subset $Y \subset {\rm Douady}(X)$.

\begin{definition}\label{d.D}
Let $\sC \subset \BP H$ be a Legendrian cone structure of VMRT-type on a contact manifold $(X, H)$ of dimension 5. For each $y \in Y$, the  deformation theory of compact complex submanifold in a complex manifold identifies the tangent space $T_y Y$ with the vector space $H^0(C_y, N_{C_y})$ of holomorphic sections of the holomorphic vector bundle $N_{C_y}$. Let $D_y \subset T_yY$ be the 2-dimensional subspace corresponding to the subspace of holomorphic sections of the vector subbundle $\sO(1) \subset N_{C_y}$, $$D_y \cong H^0(\BP^1, \sO(1)) \subset H^0(\BP^1, \sO(1) \oplus \sO^{\oplus 3}) = H^0(C_y, N_{C_y}) = T_y Y.$$ The rank 2 distribution $D \subset TY$ is called the {\sl natural distribution} on $Y$. \end{definition}

The following proposition is  a direct consequence of Theorem 5.3 of \cite{HLGC} or Theorem 3.15 of \cite{HLNa}.

 \begin{proposition}\label{p.HL235}
Let $\sC \subset \BP H$ with $\rho: \sC \to Y$ be a nondegenerate Legendrian cone structure of VMRT-type.
Let $\sC^o \subset \sC$ be the open subset consisting of nondegenerate points of the Legendrian curves  $\sC_x \subset \BP H_x, x \in X$.  Then there is a dense open subset $Y^o \subset Y$ such that $\sC^o = \rho^{-1}(Y^o)$ and   the restriction $D|_{Y^o}$  of the natural distribution $D$  is a $(2,3,5)$-distribution.
 \end{proposition}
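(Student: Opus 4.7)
The plan is to prove two assertions: (a) the nondegenerate locus $\sC^o \subset \sC$ satisfies $\sC^o = \rho^{-1}(Y^o)$ for a dense open $Y^o \subset Y$; and (b) the restricted natural distribution $D|_{Y^o}$ has growth vector $(2,3,5)$.

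For (a), I first note that $\sC^o$ is open in $\sC$ because nondegeneracy of $[v]\in\sC_x$, i.e.\ the osculating filtration of $\sC_x$ filling $H_x$ at some finite order, is an open condition on $(x,[v])$. It is nonempty by the hypothesis that $\sC$ is nondegenerate, and since each $\sC_x$ is a curve its nondegenerate locus is Zariski-dense in $\sC_x$ whenever nonempty, so $\sC^o$ is dense in $\sC$. The crux of (a) is that $\sC^o$ is $\rho$-saturated, i.e.\ nondegeneracy of $(x,[T_x C_y])\in\sC_x$ is invariant under the choice of point $x\in C_y$. For this I would exploit the deformation-theoretic identification $T_y Y = H^0(C_y, N_{C_y})$ together with the canonical splitting $N_{C_y} \cong \sO(1)\oplus\sO^{\oplus 3}$: the osculating filtration of $\sC_x$ at $[T_x C_y]$ is produced by iteratively differentiating sections of $N_{C_y}$, an operation intrinsically encoded in the normal bundle and the contact form along $C_y$. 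Since the $\sO(1)$-summand of $N_{C_y}$ and its successive iterates under the bracket with the contact structure are canonical and independent of the point $x\in C_y$ at which they are evaluated, nondegeneracy is a property of the line $C_y$ alone. Setting $Y^o := \rho(\sC^o)$ then yields $\sC^o = \rho^{-1}(Y^o)$, with $Y^o$ open because $\rho$ is a submersion, and dense because $\sC^o$ is dense.

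For (b), fix $y\in Y^o$. By deformation theory $T_y Y \cong H^0(C_y, N_{C_y})$ is $5$-dimensional, and $D_y \cong H^0(C_y, \sO(1))$ is the $2$-dimensional subspace arising from the $\sO(1)$-summand. To compute $D^2$ and $D^3$, I would choose local frames of $D$ near $y$ coming from first-order deformations of lines that preserve the $\sO(1)$-direction, and compute their Lie brackets at $y$. The bracket of two such sections lands in a larger filtrand of $T_y Y$ associated to a deformation-theoretic flag on $N_{C_y}$ induced by the osculating subspaces of $\sC_x$ at $[T_x C_y]$, and nondegeneracy at the corresponding point of $\rho^{-1}(y)$ forces this flag to have the expected jumps. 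One computes $\dim D^2_y = 3$ and $\dim D^3_y = 5$. These identifications are precisely the content of Theorem 5.3 of \cite{HLGC} (alternatively Theorem 3.15 of \cite{HLNa}), which can be directly invoked; the essential geometric mechanism is that iterated Lie brackets of $D$-sections realize the successive osculating subspaces of $\sC_x$ relative to $[T_x C_y]$, and nondegeneracy is exactly the condition for these brackets to generate $T_y Y$ in two steps.

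The main obstacle is the saturation step in (a): converting the pointwise nondegeneracy condition on $\sC_x$, which is a $\mu$-fiber condition, into a line-level condition that is a $\rho$-fiber condition. This transfer depends on carefully matching the osculating structure of the Legendrian curve $\sC_x$ at $[T_x C_y]$ with the deformation-theoretic data of $C_y$ itself, an identification built into the very construction of the double fibration $(\rho,\mu)$. Once this saturation is in hand, the rank computations of (b) follow by a standard argument in the style of Tanaka prolongation, essentially the same as in the cited literature.
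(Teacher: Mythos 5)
Your proposal is correct and ultimately rests on the same foundation as the paper: the paper gives no independent argument for this proposition, stating it as a direct consequence of Theorem 5.3 of \cite{HLGC} (or Theorem 3.15 of \cite{HLNa}), which is exactly the result you invoke for the identification of iterated brackets of $D$-sections with the osculating subspaces of $\sC_x$ and for the $\rho$-saturation of the nondegeneracy locus. Your additional heuristics on saturation and the rank computation are a reasonable gloss on what those cited theorems establish, not a genuinely different route.
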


We have the following converse, Theorem 5.10 of \cite{HLGC}.

 \begin{proposition} \label{p.HLconverse}
Let $D \subset TM$ be a $(2,3,5)$-distribution on a 5-dimensional manifold.
Then any point $y \in M$ has a neighborhood $y \in Y \subset M$  such that the $\BP^1$-bundle $\BP D|_Y$ admits a holomorphic submersion $\mu: \BP D|_Y \to X$ to a contact manifold $(X, H)$ of dimension 5 with an embedding $\BP D|_Y \subset \BP H$ whose image $\sC \subset \BP H$ is a nondegenerate Legendrian cone structure of VMRT-type on $(X,H)$  such that any  point $z \in \sC$ is a nondegenerate point of the Legendrian curve $\sC_x, x = \mu(z) \in X$.
 \end{proposition}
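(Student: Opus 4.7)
The plan is to reverse-engineer the double fibration of Figure \ref{F:fibration-intro} using the classical theory of abnormal extremals of $(2,3,5)$-distributions: construct on $\BP D$ a canonical line foliation $E$, take its local leaf space as $X$, and then recover the contact distribution $H$ and Legendrian cone structure $\cC$ from the residual bracket geometry on $\BP D$.

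Concretely, let $\rho: \BP D \to M$ be the projection, with vertical $V = \ker d\rho$ of rank $1$. The $(2,3,5)$ bracket relations $D^2 = [D,D]$ and $D^3 = [D,D^2] = TM$ produce a canonical rank-$1$ Cauchy characteristic $E \subset T\BP D$ of the lifted rank-$4$ distribution $\widehat D^2 := (\rho_*)^{-1}(D^2)$, transverse to $V$ and satisfying $\rho_*(E_{(p,[v])}) = \C v$ at each point; its integral curves are the lifts to $\BP D$ of the abnormal extremals of $D$ (cf.\ \cite{BH}, \cite{Zel1999}). Shrinking $M$ to a sufficiently small neighborhood $Y$ of the given point $y$, the foliation $E|_{\BP D|_Y}$ admits a Hausdorff leaf space $X$ of dimension $5$, with submersion $\mu: \BP D|_Y \to X$. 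To produce the contact structure, I identify a rank-$5$ subbundle $F \subset T\BP D$ containing $\widehat D^2$ that is invariant under the flow of $E$; then $H := \mu_*(F) \subset TX$ is well-defined of rank $4$, and a bracket computation deduces $[H,H] = TX$ from $D^3 = TM$, so $H$ is contact. The embedding $\iota: \BP D|_Y \hookrightarrow \BP H$ sends $(p,[v])$ to $[\mu_*(w)]$ for any generator $w$ of $V_{(p,[v])}$; since $V \subset F$ and $V \cap E = 0$, this is a well-defined map into $\BP H$. Setting $\cC := \iota(\BP D|_Y)$, the image $C_p := \mu(\rho^{-1}(p)) \subset X$ of each $\rho$-fiber is a rational curve, and a normal-bundle computation using the derived flag $D \subset D^2 \subset TM$ yields $N_{C_p/X} \cong \sO(1) \oplus \sO^{\oplus 3}$, so $C_p$ is a line in the sense of Definition \ref{d.line} and $\cC$ is of VMRT-type. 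Nondegeneracy of $\cC_x \subset \BP H_x$ at each $z \in \cC$ then follows by tracking the osculating filtration of $\cC_x$ through the bracket flag on $\BP D$: the rank jumps of the derived flag of $D$ translate under $\mu_*$ into full osculation of the Legendrian curve.

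The main obstacle is the identification of the rank-$5$, $E$-invariant distribution $F$ correcting $\widehat D^2$ (whose naive $E$-quotient would have rank only $3$), and the verification that its descent $H$ is a contact distribution. This amounts to a careful bracket computation on $\BP D$ that parallels Tanaka's prolongation construction for the parabolic geometry of type $(G_2, P_{1,2})$, but must be carried out pointwise without any regularity or flatness assumption.
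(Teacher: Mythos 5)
First, a point of reference: the paper does not prove this proposition --- it is quoted verbatim as Theorem 5.10 of \cite{HLGC}, and the structural content of that construction is recorded in Proposition \ref{p.bracket}. Your overall strategy (foliate $\BP D$ by lifted abnormal extremals, take the local leaf space as $X$, descend a rank-$5$ distribution to obtain $H$) is the right one, but your identification of the characteristic line field $E$ is incorrect. In the adapted frame of Proposition \ref{p.bracket} one has $(\rho_*)^{-1}(D^2)=\langle \partial_t,e_1,e_2,e_3\rangle=\wD^3$, and its Cauchy characteristic space is exactly the vertical $V=\langle\partial_t\rangle$: indeed $[\partial_t,e_i]=0$, while for $X=a\partial_t+be_1+ce_2+de_3$ the conditions $[X,e_3]\equiv be_4+ce_5\equiv 0$ and $[X,e_1]\equiv -d\,e_4\equiv 0$ modulo $\wD^3$ force $b=c=d=0$. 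So $\Ch\bigl((\rho_*)^{-1}(D^2)\bigr)$ is not transverse to $V$ --- it \emph{equals} $V$ --- and quotienting by it returns $M$ rather than the abnormal extremal space. The correct characterization requires climbing one more step of the derived flag of the tautological rank-$2$ distribution $\wD$: the line field $E$ is the rank-$1$ kernel of the tensorial bracket $\wD\otimes\wD^4/\wD^3\to T\cC/\wD^4$, equivalently $E=\Ch(\wD^4)\cap\wD$, where $\wD^4$ has rank $5$.

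This also disposes of what you flag as the ``main obstacle'': the rank-$5$, $E$-invariant subbundle $F$ is precisely $\wD^4$, and $E\subset\Ch(\wD^4)$ is exactly what lets $H:=\wD^4/E$ descend to the leaf space. Beyond that, several steps are asserted rather than proved and are not routine: (a) shrinking $Y\subset M$ does not by itself produce a Hausdorff leaf space for $E$ on $\BP D|_Y$, since the compact fibres $\BP D_p\cong\BP^1$ cannot be shrunk; (b) $[H,H]=TX$ alone does not make a corank-$1$ distribution contact --- you must verify that the Levi form $\wedge^2 H\to TX/H$ has rank $4$ rather than $2$; and (c) the splitting type $N_{C_p}\cong\sO(1)\oplus\sO^{\oplus 3}$ of Definition \ref{d.line} requires identifying each graded piece $\wD^{k+1}/\wD^k$ restricted to a fibre $\rho^{-1}(p)\cong\BP^1$ as an explicit twist of the tautological bundle; the degree count $\deg N_{C_p}=1$ is easy, but excluding splitting types such as $\sO(2)\oplus\sO^{\oplus 2}\oplus\sO(-1)$ is the actual content. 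These verifications constitute the proof in \cite{HLGC}, which is why the present paper simply cites that result.
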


Propositions \ref{p.HL235} and \ref{p.HLconverse} are converse to each other
and this correspondence is canonical, so we immediately obtain a natural isomorphism:
 \begin{align} \label{E:CDsym}
 \aut(\sC)_x \cong \aut(D)_y, \quad \forall y \in Y, \quad \forall x \in C_y.
 \end{align}

The vertical distributions $E = \ker(d\mu)$ and $V = \ker(d\rho)$ define line distributions on $\cC$ and we have the following double fibration:

\begin{center}
\begin{figure}[h]
\begin{tikzcd}
& (\cC = \bbP D;E,V) \arrow[ld,"\rho"'] \arrow[rd,"\mu"] \\ (Y,D) & & (X,H) \end{tikzcd}
\label{F:fibration}
\caption{Canonical double fibration relating $(2,3,5)$-distributions and Legendrian cone structures of VMRT type}
\end{figure}
\end{center}

On the 6-manifold $\cC = \bbP D$, there is also a canonical rank 2 distribution $\wD$ that is tautologically induced: given $\ell \in \bbP D_x$, define
\begin{align}
\wD_\ell := (d\rho)^{-1}( \,\widehat{\ell} \,) \subset T_\ell (\bbP D).
\end{align}
 Let $\wD^k$ denote the $k$-th weak derived system of $\wD$.  Its {\sl Cauchy characteristic space} is $\Ch(\wD^k) := \{ \bX \in \Gamma(\wD^k) :  \cL_\bX \wD^k \subset \wD^k \}$.


\begin{proposition}\label{p.bracket}
Given notation as above, we have:
\begin{enumerate}
\item $\wD = E \oplus V$ and $\wD^2 = (d\rho)^{-1} D$.
\item $V = \Ch(\wD^2) \cap \wD$.
\item $E = \Ch(\wD^4) \cap \wD$.
\end{enumerate}
In particular, the successive brackets of sections of the distribution $\wD = E \oplus V$ generate the tangent bundle $T\sC$. \end{proposition}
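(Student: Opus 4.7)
The plan is to work in a local adapted frame, verifying (1) and (2) by direct Lie bracket computations and handling (3) by a mix of calculation and a geometric appeal to the canonical double fibration. Choose a local frame $\{e_1, e_2\}$ of $D$ on $Y$ and trivialize $\cC = \bbP D \cong Y \times \bbP^1$ by a fiber coordinate $s$, so that the tautological line at $(y,s)$ is spanned by $s e_1(y) + e_2(y)$. Let $\tilde e_i$ denote the horizontal lift of $e_i$ (with no $\partial_s$-component) and set $\tilde F := s\tilde e_1 + \tilde e_2$, so that $V = \langle \partial_s\rangle$ and $\wD = \langle \partial_s, \tilde F\rangle$.

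For (1), $V \subset \wD$ is tautological, and $E \subset \wD$ follows from the deformation theory of $C_y$: tangent vectors to $\sC_x = \mu^{-1}(x)$ project under $d\rho$ to sections of the canonical subbundle $\sO(1) \subset N_{C_y}$ vanishing at $x$, which span precisely the tautological line $\widehat{\ell} \subset D_y$. Transversality $V \cap E = 0$ holds because the $\rho$- and $\mu$-fibers through $\ell \in \cC$ intersect only at $\ell$ (a line $C_y$ has at most one tangent direction at a given point $x$). The identity $\wD^2 = (d\rho)^{-1}(D)$ then follows from $[\partial_s, \tilde F] = \tilde e_1$, which together with $\tilde F$ recovers $\tilde e_1, \tilde e_2$. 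For (2), $V \subset \Ch(\wD^2)$ because $\partial_s$ commutes with each horizontal lift $\tilde e_i$, whereas $\tilde F \notin \Ch(\wD^2)$ since $[\tilde F, \tilde e_1] = -\widetilde{[e_1, e_2]}$ projects to $-[e_1, e_2] \in D^2 \setminus D$ by the $(2,3,5)$-condition; hence $\Ch(\wD^2) \cap \wD = V$.

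For (3), iterating produces $\wD^3 = (d\rho)^{-1}(D^2)$ of rank $4$ and $\wD^4 = \wD^3 + \langle s\,\widetilde{[e_1, [e_1, e_2]]} + \widetilde{[e_2, [e_1, e_2]]}\rangle$ of rank $5$, the new generator arising from $[\tilde F, \widetilde{[e_1, e_2]}]$. The exclusion $V \not\subset \Ch(\wD^4)$ is direct: the bracket $[\partial_s, s\,\widetilde{[e_1, [e_1, e_2]]} + \widetilde{[e_2, [e_1, e_2]]}] = \widetilde{[e_1, [e_1, e_2]]}$ has $d\rho$-image $[e_1, [e_1, e_2]]$, which is linearly independent modulo $D^2$ from the added direction $s[e_1, [e_1, e_2]] + [e_2, [e_1, e_2]]$ by $(2,3,5)$. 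The inclusion $E \subset \Ch(\wD^4)$ is argued geometrically: $\Ch(\wD^4)$ is an integrable subdistribution of $\wD^4$, and the local quotient of $\cC$ by the resulting foliation inherits a canonical distribution descending from $\wD^4$; by the canonicity of the VMRT double fibration (Proposition \ref{p.HLconverse}), this quotient is identified with $(X, H)$ via $\mu$, forcing $\Ch(\wD^4) \supset \ker(d\mu) = E$. Combined with $V \not\subset \Ch(\wD^4)$ and the decomposition $\wD = V \oplus E$, intersection with $\wD$ yields $\Ch(\wD^4) \cap \wD = E$. The same bracket used above also produces $\wD^5 = T\sC$, completing the final assertion that successive brackets of $\wD$ generate $T\sC$.

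The main obstacle is establishing the inclusion $E \subset \Ch(\wD^4)$. A purely computational verification requires writing $E = \langle \tilde F - \kappa \partial_s\rangle$, identifying the local function $\kappa$ on $\cC$ in terms of the structure functions of $D$ via the explicit form of $\mu$, and then checking a cubic-in-$s$ polynomial identity modulo $\wD^4$; the conceptual route above avoids this calculation by appealing to the canonical nature of the double fibration, which intrinsically identifies $\mu$ with the Cauchy-characteristic quotient of $\wD^4$.
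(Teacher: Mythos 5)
Your proposal is correct and follows essentially the same route as the paper: an adapted local frame computation of the weak derived flag of $\wD$ establishing (1), (2) and the non-characteristic behaviour of $V$ in (3), followed by an appeal to the construction of $\mu$ in Proposition \ref{p.HLconverse} to identify $E$ with the remaining Cauchy-characteristic direction of $\wD^4$ inside $\wD$. The only real difference is that you verify $E \subset \wD$ directly via deformation theory (sections of $\sO(1) \subset N_{C_y}$ vanishing at $x$), whereas the paper obtains this as a byproduct of identifying $E$ with the rank-one kernel of the tensorial bracket map $\wD \otimes \wD^4/\wD^3 \to \wD^5/\wD^4$.
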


 \begin{proof}
 Let $\{ e_1,e_2,e_3,e_4,e_5 \}$ be a local framing adapted to the weak derived flag of $D \subset TY$, i.e.\ $D = \langle e_1, e_2 \rangle$, $D^2/D \equiv \langle e_3 \rangle$, $D^3 / D^2 \equiv \langle e_4, e_5 \rangle$ with
 \begin{align}
 [e_1,e_2] = e_3, \quad [e_1, e_3] = e_4, \quad [e_2, e_3] = e_5.
 \end{align}
 In a local trivialization of $\bbP D \to Y$ about a given $y \in Y$, we introduce an affine fibre coordinate $t$ so that $\ell \in \bbP D_y$ corresponds to $\langle e_1 + t e_2 \rangle$.  Then $V = \langle \partial_t \rangle$, $\wD = \langle \partial_t, e_1 + t e_2 \rangle$, and $\wD^2 = \langle \partial_t, e_1, e_2 \rangle = (d\rho)^{-1} D$.  Continuing,
 \begin{align}
 \wD^3 / \wD^2 \equiv \langle e_3 \rangle, \quad \wD^4 \equiv \langle e_4 + t e_5 \rangle.
 \end{align}
 The Lie bracket induces a tensorial map $\wD \otimes \wD^4 / \wD^3 \to \wD^5 / \wD^4$ that is surjective and has rank 1 kernel, denoted by  $\widetilde{E} \subset \wD$, distinct from $V$.

 In Proposition \ref{p.HLconverse} (or \cite[Theorem 5.10]{HLGC}), the contact manifold $(X, H)$ is constructed precisely by taking the quotient by the rank 1 kernel $\widetilde{E} \subset \wD$.  Thus, $\widetilde{E} = \ker(d\mu) = E$ follows from the definition of $\mu$.
 \end{proof}

The triple $(\bbP D; E,V)$ is called a {\sl pseudo-product structure of $G_2$-type}.  We observe that the symbol algebra of $\wD$, i.e.\ the associated-graded of the weak derived flag of $\wD$, has depth 5 and is isomorphic to $\fg_-$ for $\fg$ of type $G_2$ in the grading associated to the Borel subgroup $P_{1,2}$ (see Figure \ref{F:G2pr}).  Integral curves of $E$ are referred to as {\sl abnormal extremals}, and the quotient $X = \bbP D / E$ is the {\sl abnormal extremal space}, which is equipped with the contact distribution $H := \wD^4 / E \subset TX$.  Along the fibres of the projection $\mu : \bbP D \to X$, the line field $V$ twists and generates a Legendrian cone structure $\cC \subset \bbP H$.

\begin{corollary}\label{c.bracket}
In Proposition \ref{p.bracket}, suppose there exists a dense open subset $\sC^o \subset \sC$ such that for any two points $y_1, y_2 \in \rho(\sC^o \cap \mu^{-1}(x))$ for $ x \in X$
(resp. $x_1, x_2 \in \mu(\sC^o \cap \rho^{-1}(y))$ for $ y \in Y$), the germ of $\sD$ at $y_1$ and the germ of $\sD$ at $y_2$ are equivalent (resp. the germ of $\sC$ at $x_1$ and the germ of $\sC$ at $x_2$ are equivalent). Then there exists a dense open subset $Y^o \subset Y$ (resp. $X^o \subset X$) such that the germ of $D$ at any two points $y_1, y_2 \in Y^o$ (resp. the germ of $\sC$ at any two points $x_1, x_2 \in X^o$) are equivalent. \end{corollary}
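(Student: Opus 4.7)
The plan is to lift the problem from $Y$ (or $X$) to the intermediate space $\sC$, where Proposition \ref{p.bracket} is available, and then exploit a Chow-type reachability argument for the pair of line distributions $(E, V)$. First I would introduce the equivalence relation $\sim$ on $Y$ defined by $y_1 \sim y_2$ iff the germ of $D$ at $y_1$ is equivalent to the germ of $D$ at $y_2$, and pull it back to $\sC^o$ via $\rho$: set $p_1 \approx p_2$ iff $\rho(p_1) \sim \rho(p_2)$. Two observations follow immediately. Along an integral curve of $V = \ker(d\rho)$ the map $\rho$ is constant, so $\approx$ is trivially preserved. Along an integral curve of $E = \ker(d\mu)$ lying in $\sC^o$, both endpoints lie in $\sC^o \cap \mu^{-1}(x)$ for the common value $x = \mu(p_1) = \mu(p_2)$, and the hypothesis of the corollary delivers that $\approx$ is preserved here as well.

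Next I would combine these with Proposition \ref{p.bracket}(4), according to which successive brackets of sections of $E$ and $V$ generate $T\sC$. By the (holomorphic) Chow--Rashevskii property of such bracket-generating pairs, for every $p_0 \in \sC^o$ there is an open neighborhood $U \subset \sC^o$ of $p_0$ such that each $p \in U$ can be joined to $p_0$ by a finite piecewise-holomorphic path whose segments are integral curves of $E$ or of $V$ and which stays in $\sC^o$. Combined with the two observations above, each $\approx$-equivalence class in $\sC^o$ is therefore open. Since in the natural setup $\sC \setminus \sC^o$ is a proper closed analytic subset of codimension $\geq 1$, $\sC^o$ is connected, and the open partition of $\sC^o$ into $\approx$-classes has only one member.

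Finally, setting $Y^o := \rho(\sC^o) \subset Y$ gives an open subset (by submersiveness of $\rho$) that is dense in $Y$ (by surjectivity of $\rho$ and density of $\sC^o$), every pair of whose points has equivalent germs of $D$. The ``resp.\ '' version is proved by exactly the same argument after swapping the roles of $\rho$ and $\mu$ and of $E$ and $V$; the output is then a dense open subset $X^o := \mu(\sC^o) \subset X$ on which all germs of $\sC$ are equivalent.

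The main obstacle I anticipate is the careful justification of the Chow reachability step with the constraint that the piecewise $E/V$-path stays inside $\sC^o$. Reachability itself is standard via iterated commutators of the flows of framing sections (the leading-order term of a BCH expansion realizes any iterated Lie bracket direction); confining the path to $\sC^o$ relies on the codimension-at-least-one nature of $\sC \setminus \sC^o$, so generic choices of the lengths of the intermediate segments avoid it. All other ingredients in the plan are essentially tautological once the hypothesis of the corollary and Proposition \ref{p.bracket} are in hand.
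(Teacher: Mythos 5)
Your proposal is correct and follows essentially the same route as the paper: both introduce the germ-equivalence relation, use the hypothesis to make it invariant along the $\rho$-images of $\mu$-fibers of $\sC^o$ (your $E$-curves, with $V$-curves acting trivially), and invoke the bracket-generating property of $E \oplus V$ from Proposition \ref{p.bracket} to join general points by chains of such curves. The only differences are cosmetic --- you run the Chow--Rashevskii argument upstairs on $\sC$ while the paper states it directly on $Y$ --- and your aside that $\sC \setminus \sC^o$ is analytic of codimension $\geq 1$ is not actually part of the hypothesis, though it is not needed for the conclusion about a dense open $Y^o$.
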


\begin{proof} Consider the equivalence relation  on points of $Y$ given by $y_1 \sim y_2$ if and only if the germ of $D$ at $y_1$ and the germ of $D$ at $y_2$ are equivalent. By the assumption two points in $Y$ joined by the $\rho$-image of a fiber of $\sC^o \to X$ are equivalent. But Proposition \ref{p.bracket}, two general points of $Y$ can be joined by a connected chain of $\rho$-images of fibers of $\sC^o \to X$. Thus we can find $Y^o \subset Y$ with the desired property. The argument for $X^o \subset X$ is similar. \end{proof}

The following is a special case of Main Theorem in Section 2 of \cite{Mok}.

\begin{theorem}\label{t.Mok}
Let $\sC \subset \BP H$ be a Legendrian cone structure of VMRT-type on a contact manifold $(X, H)$ of dimension $5$. If the fiber $\sC_x \subset \BP H_x$ is isomorphic to (the germ of) the rational normal curve $\mathbf{Z} \subset \BP \Sym^3 W$ for some $x \in X$, then there exists an open neighborhood $x \in U \subset X$ such that $\sC|_U$ is $\mathbf{Z}$-isotrivial and  locally flat. \end{theorem}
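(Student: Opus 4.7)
The plan is to reduce the theorem to the local flatness of the canonically associated $(2,3,5)$-distribution $D$ on $Y$, and then exploit the correspondence \eqref{E:CDsym} together with local uniqueness of the flat $(2,3,5)$-distribution. Specifically, once $D$ is shown to be locally flat near some $y_0 \in Y$ with $x \in C_{y_0}$, Proposition \ref{p.HLconverse} yields that $\sC$ is locally isomorphic to the flat $\bZ$-isotrivial cone structure on a neighborhood $U$ of $x$, establishing both $\bZ$-isotriviality and local flatness simultaneously.

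The main task is therefore to show that the hypothesis $\sC_x \cong \bZ$ forces the fundamental Cartan quartic $Q \in \Gamma(\Sym^4 D^*)$ of $D$ to vanish on an open neighborhood of $y_0$. For this, I plan to identify the relative invariant $q_0$ of Proposition \ref{P:relinv} on the Legendrian fibre $\sC_x$ — which characterizes $\bZ$ among nondegenerate Legendrian curves by $q_0 \equiv 0$ (Theorem \ref{T:homZ}) — with an explicit component of $Q$ along the curve $\rho(\mu^{-1}(x)) \subset Y$. The matching of multiply-transitive cases in Figure \ref{F:MTcurve} illustrates this correspondence at the homogeneous level. Vanishing of $q_0$ on all of $\sC_x$ then forces vanishing of $Q$ along a full curve in $Y$; propagating $x$ along the line $C_{y_0}$ (and applying the same identification at each point) extends vanishing of $Q$ to an open neighborhood of $y_0$ in $Y$. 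Since $Q$ is the harmonic curvature of the regular normal $(G_2, P_1)$-parabolic geometry underlying $D$, its vanishing on an open set implies local flatness of $D$.

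An alternative route, bypassing Cartan quartic bookkeeping, is to first establish $\bZ$-isotriviality on some $U \ni x$ (using Mok's propagation-of-VMRT technique from the cited Main Theorem of \cite{Mok}), then produce a contact $G_0$-structure via Lemma \ref{l.Z-iso} with $G_0 \cong \GL(2)$. By Theorem \ref{t.Abel} this admits nonzero contact prolongation, its Tanaka prolongation being the Lie algebra of type $G_2$ with the $P_2$-contact grading (Figure \ref{F:G2pr}), so it underlies a regular, normal parabolic geometry of type $(G_2, P_2)$. The discussion around Figure \ref{F:fibration-flat} invokes \cite{Cap2005}: such geometries form a canonical double fibration with the associated $(G_2, P_1)$-geometry only in the flat case, and since VMRT-type is precisely this compatibility, the $(G_2, P_2)$-curvature must vanish. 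The principal obstacle in either route is making precise the key correspondence between projective invariants of the VMRT fibre $\sC_x$ and the Cartan curvature data of $D$ on $Y$; once in place, the remainder of the argument is standard parabolic-geometric integrability.
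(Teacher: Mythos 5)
The paper offers no internal proof of Theorem \ref{t.Mok}: it is quoted as a special case of the Main Theorem in Section 2 of \cite{Mok}, so your proposal is measured against that citation rather than an in-paper argument. Measured that way, the proposal does not close the essential gap, which is the propagation step: the hypothesis concerns a \emph{single} fibre $\sC_x \cong \bZ$, while both conclusions ($\bZ$-isotriviality and local flatness on a neighborhood $U$) require controlling $\sC_{x'}$ for all $x'$ near $x$. In Route 1 you write that ``propagating $x$ along the line $C_{y_0}$'' extends the vanishing of the Cartan quartic to an open set, but this presupposes $\sC_{x'} \cong \bZ$ at the other points $x' \in C_{y_0}$, which is not part of the hypothesis; establishing it is precisely Mok's parallel-transport-of-VMRTs argument via the deformation theory of the lines $C_y$, which you do not reproduce. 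In Route 2 you invoke ``Mok's propagation-of-VMRT technique'' by name to obtain isotriviality on $U$ --- that is, you cite the very result whose proof is at issue.

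A second gap is specific to Route 1: the identification of the Wilczynski invariant $q_0$ of the fibre $\sC_x \subset \BP H_x$ (Proposition \ref{P:relinv}) with a component of the Cartan quartic of $D$ along the curve $\rho(\mu^{-1}(x)) \subset Y$ is asserted, not proved. Nothing in the paper establishes such an identification --- Figure \ref{F:MTcurve} only exhibits consistency for the three homogeneous families --- and a general statement of this kind is itself a nontrivial theorem in the spirit of the variational approach of \cite{Zel2006}. Even granting it, $q_0 \equiv 0$ on the one fibre $\sC_x$ would force the quartic to vanish only along the single curve $\rho(\mu^{-1}(x))$ in $Y$, far from the open set needed to conclude flatness of $D$. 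Your Route 2 endgame (isotriviality $\Rightarrow$ contact $\GL(2)$-structure via Lemma \ref{l.Z-iso} and Theorem \ref{t.Abel}, then the correspondence-space criterion of \cite{Cap2005}) is an attractive way to finish \emph{once isotriviality on $U$ is known}, and matches the remarks in Section \ref{s.1}; but converting ``VMRT-type is precisely this compatibility'' into the curvature condition of \cite{Cap2005} still requires an argument. As written, the proposal is an outline whose hard steps are either deferred to \cite{Mok} or left as unproven identifications.
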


 We have the following construction from Theorem 1.3 of \cite{HMSJ}.

 \begin{theorem} \label{t.Heisenberg}
 Let $Z \subset \BP^3$ be a germ of a  Legendrian curve. Then there exists a contact manifold $(X, H)$ of dimension $5$ with a Legendrian cone structure $\sC \subset \BP H$ of VMRT-type with the associated $\BP^1$-fibration $\rho: \sC \to Y$ such that:
 \begin{itemize}
 \item[(i)] the 5-dimensional Heisenberg group $\Heis$ acts on $X$ with an open orbit $X^o \subset X$ giving an identification $\Heis \cong X^0$;
 \item[(ii)] the restriction $\sC|_{X^o}$ is biholomorphic to $\sC^Z \subset \BP \sH$ in Definition \ref{d.flatcone};
 \item[(iii)] the  lines on $X$ corresponding to fibers of $\rho$ intersecting $\Heis$ on affine  lines of $\Heis$ with respect to the affine coordinates on $\Heis$ in Proposition \ref{p.Heis};
 \item[(iv)] by (iii),  we can identity $Y$ with the set of affine lines on $\Heis$ that are in the direction of $Z \subset \BP \sH_o$ up to left-translation.
 \end{itemize}
 \end{theorem}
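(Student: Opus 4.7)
The plan is to construct the objects in reverse order: first build a 5-dimensional base $Y$ as the space of allowed affine lines on $\Heis$, equip it with a canonical $(2,3,5)$-distribution $D$, and then apply Proposition \ref{p.HLconverse} to produce $(X,H,\sC)$.  The identification $\Heis \cong X^o$ and properties (i)--(iv) will then follow by $\Heis$-equivariance of the whole construction.

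First, I would define $Y$ as the set of affine lines in $\Heis$ whose direction (after transporting back to $o$ by a left translation) lies in $\widehat{Z} \subset \sH_o$.  In the coordinates of Proposition \ref{p.Heis}(i), a generic such line has the form $\ell_{p,v}(t) = p \circ (tv_1, tv_2, tv_3, tv_4, 0)$ with $[v_1:v_2:v_3:v_4] \in Z$.  Since $[v,v]=0$, the stabilizer of $\ell_{p,v}$ under left translation is the 1-parameter subgroup $\{(sv_1,\dots,sv_4,0)\}$, so $Y = (\Heis \times Z)/\sim$, with $(p\cdot(sv,0),[v]) \sim (p,[v])$, is a 5-dimensional complex manifold carrying a natural left $\Heis$-action.

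Next, I would equip $Y$ with a canonical rank 2 distribution $D$ that will play the role of the natural distribution of Definition \ref{d.D} for the cone structure yet to be constructed.  At $y = [(p,[v])]$, declare $D_y$ to be the image in $T_y Y$ of the 2-plane inside $\sH_p$ spanned by $v$ itself (a direction in which the line can be moved along $\widehat{Z}$) together with the lift of $T_{[v]} Z$ (the direction in which the line-direction varies along $Z$).  One checks that $D$ is $\Heis$-invariant and that at points over a nondegenerate $[v_0] \in Z$, the weak derived flag of $D$ satisfies $\dim D^2_{y_0} = 3$ and $\dim D^3_{y_0} = 5$: the gain in $D^2$ comes from the Heisenberg bracket $[v_0, T_{v_0}\widehat Z] \subset \heis_{-2}$ (nonzero because $T_{v_0}\widehat Z$ is a varying Legendrian direction), while the final two dimensions in $D^3$ come from iterated brackets against $T^{(2)}_{v_0}\widehat Z$ and $T^{(3)}_{v_0}\widehat Z$, which fill out $\heis_{-1}/\langle v_0\rangle$ thanks to the nondegeneracy of $Z$.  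Thus $D|_{Y^o}$ is a $(2,3,5)$-distribution on a dense open subset $Y^o \subset Y$.

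Applying Proposition \ref{p.HLconverse} to $(Y^o, D)$ yields a contact 5-manifold $(X,H)$ with a Legendrian cone structure $\sC \subset \BP H$ of VMRT-type and canonical submersions $\rho : \sC \to Y^o$, $\mu : \sC \to X$; by naturality of this construction together with the $\Heis$-invariance of $D$, the $\Heis$-action on $Y$ lifts to one on $\sC$ and descends to an $\Heis$-action on $X$ preserving $(H,\sC)$.  To produce the identification $\Heis \cong X^o$, I would send $p \in \Heis$ to the point $x_p \in X$ parametrizing the Legendrian curve of all lines of $Y$ passing through $p$; this is well-defined and $\Heis$-equivariant, its image $X^o$ is an open orbit, and injectivity at $o$ follows from Corollary \ref{c.Tanaka}(i) applied to the isotropy of the distinguished Legendrian curve $\sC^Z_o$.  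Properties (iii) and (iv) are now built into the construction of $Y$, while (ii) follows because $\sC|_{X^o}$ is an $\Heis$-invariant $Z$-isotrivial cone structure whose fiber at $x_o$ matches $Z$, hence coincides with $\sC^Z$ by Lemma \ref{l.Z-iso}.

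The main technical obstacle is the verification that $D$ is a $(2,3,5)$-distribution on $Y^o$: this requires translating the osculating filtration $\widehat{z} \subset T_z\widehat Z \subset T^{(2)}_z\widehat Z \subset T^{(3)}_z\widehat Z = V$ at a nondegenerate point $z$ into the weak derived flag of $D$, with careful bookkeeping against the Heisenberg commutator $\heis_{-1}\wedge\heis_{-1}\to\heis_{-2}$.  A cleaner alternative would be to verify the bracket-generating structure directly on the 6-manifold $\sC^Z$ via Proposition \ref{p.bracket} applied to the pseudo-product $(E,V)$ built from the affine-line foliation and the vertical foliation of $\sC^Z \to \Heis$, and only then descend to $Y$.
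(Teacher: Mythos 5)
First, note that the paper does not actually prove this statement: it is imported wholesale as ``the following construction from Theorem 1.3 of \cite{HMSJ}'', so there is no internal proof to compare against. Your overall strategy --- build $Y$ as the space of affine lines of $\Heis$ with directions in $\widehat{Z}$, equip it with the natural rank-$2$ distribution, verify the $(2,3,5)$ condition, and then invoke Proposition \ref{p.HLconverse} together with $\Heis$-equivariance --- is a reasonable reconstruction and close in spirit to the cited source. However, the step you yourself flag as the main technical obstacle contains a genuine error. You claim that the gain $D^2/D$ comes from ``the Heisenberg bracket $[v_0, T_{v_0}\widehat Z] \subset \heis_{-2}$ (nonzero because $T_{v_0}\widehat Z$ is a varying Legendrian direction)''. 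This bracket is \emph{identically zero}: by Definition \ref{d.Legendre} the Legendrian condition says precisely $\sigma(T_z\widehat Z, T_z\widehat Z)=0$, and $[v_0,\dot v_0]=\sigma(v_0,\dot v_0)\in\heis_{-2}$ vanishes. As written, your mechanism would force $D^2=D$. The correct bookkeeping is the opposite of yours: the new direction in $D^2$ is the second osculating direction $T^{(2)}_{v_0}\widehat Z \bmod T_{v_0}\widehat Z$ (obtained by differentiating the translation direction $\dot v$ along $Z$), while the center $\heis_{-2}$ appears only at the third step, together with $T^{(3)}_{v_0}\widehat Z$, because $\sigma(\dot v_0,\ddot v_0)\neq 0$ for a nondegenerate Legendrian curve. (One can cross-check this against the double fibration: $D^2=\rho_*\wD^3\subset\rho_*\wD^4=(d\mu)^{-1}H$ evaluates at every point of the line into $\sH/\widehat{v}$, hence cannot contain the ``translate by the center'' deformation.) Since the entire theorem hinges on this growth computation, the proof as written does not go through, although the statement is salvageable with the corrected brackets.

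There are also secondary gaps you should repair. Your definition of $D_y$ includes ``$v$ itself'' as a generator, but translating the line $\ell_{p,v}$ in its own direction is the zero deformation, so the $2$-plane must instead be spanned by the rotation along $Z$ about $p$ and the translation by a generator of $T_{[v]}\widehat Z/\widehat{v}$. The appeal to Corollary \ref{c.Tanaka}(i) for injectivity of $p\mapsto x_p$ is a non sequitur (that corollary concerns the isotropy representation of the symmetry algebra, not a map $\Heis\to X$); injectivity here is the elementary fact that distinct points of $\Heis$ lie on distinct $1$-parameter families of admissible lines. Finally, Proposition \ref{p.HLconverse} is a purely local statement, so producing a single $X$ in which \emph{all} of $\Heis$ sits as an open orbit, with every affine line compactifying to a line in the sense of Definition \ref{d.line}, requires a global construction (as in \cite{HMSJ}) rather than a patchwork of local leaf spaces; and the theorem as stated allows degenerate germs $Z$, for which the route through a $(2,3,5)$-distribution is unavailable.
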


\begin{proposition}\label{p.dim}

When the germ $Z$ in Theorem \ref{t.Heisenberg} is nondegenerate and nonhomogeneous, the associated $(2,3,5)$-distribution $D$ corresponding to the locally flat $\sC \subset \BP H$ of VMRT type satisfies $\dim \aut(D)^0_y = 2$. \end{proposition}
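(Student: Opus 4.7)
The plan is to exploit the natural isomorphism $\aut(\sC)_x \cong \aut(D)_y$ from \eqref{E:CDsym} and reduce the problem to computing, inside $\aut(\sC)_x$, the subalgebra that setwise stabilizes the line $C_y \subset X$. Since $Z$ is nondegenerate and nonhomogeneous, Definition \ref{d.Legendre} gives $\dim\finf{Z}=1$ with $\finf{Z}=\bbC\,\id_V$; then local flatness of $\sC$ combined with Proposition \ref{p.flat}(ii)(iii) yields $\aut(\sC)_x \cong \heis \rtimes \bbC$ of dimension $6$ for $x$ in the open $\Heis$-orbit of $X$, concretely realized by the left-translation action of $\heis$ on $\Heis \subset X$ together with the $\bbC^\times$-scaling of Proposition \ref{p.Heis}(iv).

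Next, I will translate ``$\vec w$ vanishes at $y$'' in $\aut(D)^0_y$ into a condition on the corresponding $\vec v \in \aut(\sC)_x$. Such a $\vec w$ vanishes at $y$ iff its canonical lift to $\sC$ is tangent to the fiber $\rho^{-1}(y)$; since $\mu$ restricts to a biholomorphism $\rho^{-1}(y) \to C_y$, this is equivalent to $\vec v$ being tangent to $C_y$ at every point of $C_y$, i.e.\ $\vec v$ stabilizing $C_y$ setwise. Thus $\aut(D)^0_y$ corresponds to the stabilizer of $C_y$ in $\aut(\sC)_x$, and it remains to compute this stabilizer in the Heisenberg model.

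By Theorem \ref{t.Heisenberg}(iii)--(iv), after a left translation I may assume $C_y$ is a line through the identity $o$, and by Proposition \ref{p.Heis}(i)(iii) such a line is the $1$-parameter subgroup $\{(tw_1,\dots,tw_4,0):t\in\bbC\}$ of $\Heis$. Left translations preserving this subgroup setwise are exactly its own elements, since all other translates produce distinct parallel lines; this gives a $1$-dimensional subalgebra of $\heis$. The $\bbC^\times$-scaling preserves $C_y$ as a dilation along it, contributing one further dimension. These two together exhaust $\aut(\sC)_x$ and span a $2$-dimensional stabilizer, so $\dim\aut(D)^0_y = 2$. The main obstacle I anticipate is the middle step, namely the careful bookkeeping through the double fibration that replaces vanishing at $y \in Y$ by setwise preservation of $C_y \subset X$ (rather than any pointwise condition); the Heisenberg-side count is then a routine check.
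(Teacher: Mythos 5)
Your proposal is correct and follows essentially the same route as the paper: both arguments rest on identifying $\aut(\sC)_x$ with the $6$-dimensional algebra $\heis \rtimes \C$ (via local flatness and Proposition \ref{p.flat}) and on the observation that a line through $o$ is the orbit of a one-parameter subgroup of $\Heis$. The only difference is bookkeeping: the paper computes the $\aut(\sC)$-orbit dimension on $Y$ to be $4$ and subtracts, whereas you compute the stabilizer of $C_y$ directly as the translations along $C_y$ plus the weighted scaling; these are equivalent by orbit--stabilizer, and your reduction of ``$\vec w$ vanishes at $y$'' to setwise preservation of $C_y$ is sound.
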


\begin{proof}
From Proposition \ref{p.Heis} (iv) and Theorem \ref{t.Heisenberg} (ii), the Lie algebra  $\aut(\sC)_x, x \in X^o,$ of infinitesimal automorphisms of the cone structure  $\sC \subset \BP H$ contains the Lie algebras of   $\Heis$ and the multiplicative group $\C^\times$.

 When $Z$ is nonhomogeneous, we have $\dim \aut(\sC)_x^0 =1$ for $x \in X^o$ and consequently $\dim \aut(\sC)_x =6$. Since $\aut(\sC)_x^0$ acts trivially on $\sC_x$, the induced action of $\aut(\sC)_x$ on the 6-dimensional manifold $\sC$ has  5-dimensional orbits.  The fibers of $\rho: \sC \to Y$ are contained in these 5-dimensional $\aut(\sC)_x$-orbits because  each affine line in $\Heis$ tangent to $\sH$ is an orbit of a 1-parameter subgroup of $\Heis$.

Thus, $D \subset TY$ is nontransitive and the orbit-dimension of $\aut(D)$ is 4. It follows that
 \begin{align}
 \dim \aut(D)^0 =  \dim \aut(D) -  4 = \dim \aut(\sC) -4 =  2.
 \end{align}
\end{proof}

\section{$(2,3,5)$-distributions: symmetries and Legendrian curves}
\label{S:SymLeg}

 A well-known symmetry gap result \cite{Car1910, KT2014} for $(2,3,5)$-distributions is:

 \begin{theorem}\label{t.G2}
Let $D \subset TM$ be a $(2,3,5)$-distribution.
If $\dim \aut(D)_y \geq 8$ for some $y \in M$, then $\aut(D)_y$ is isomorphic to the simple Lie algebra of type $G_2$ of dimension 14 and $D$ is flat.
 \end{theorem}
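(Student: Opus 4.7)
The plan is to translate the hypothesis via the canonical double fibration of Figure \ref{F:fibration} into a statement about the associated Legendrian cone structure $\sC \subset \bbP H$ on a $5$-dimensional contact manifold $(X,H)$. Under the symmetry identification \eqref{E:CDsym}, $\aut(\sC)_x \cong \aut(D)_y$ whenever $x \in C_y$, so the hypothesis gives $\dim \aut(\sC)_x \geq 8$. Replacing $y$ by a nearby $\aut$-generic point if necessary, one may arrange that $Z := \sC_x \subset \bbP H_x \cong \bbP^3$ is a nondegenerate Legendrian curve (Proposition \ref{p.HL235}).

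Next I would argue that $Z$ must be projectively equivalent to a germ of the rational normal curve $\bZ$. Suppose not. By Theorem \ref{t.Abel}, $\finf{Z}$ has no nonzero contact prolongation. By the classification in Theorem \ref{T:homZ} together with the dichotomy of Definition \ref{d.Legendre}, if $Z \not\cong \bZ$ then $\dim \finf{Z} \leq 2$ (equality exactly for the homogeneous classes $\bL_{r^2}$ with $r^2 \notin \{0, \tfrac{1}{9}, 1, 9\}$, $\bL_1$, and $\bL_0$, and $\dim \finf{Z} = 1$ in the nonhomogeneous case). Under the additional assumption that $\sC$ is $Z$-isotrivial in a neighborhood of $x$, Proposition \ref{p.flat}(i) yields
\[
\dim \aut(\sC)_x \leq 5 + \dim \finf{Z} \leq 7 < 8,
\]
a contradiction. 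Hence $Z \cong \bZ$, and Theorem \ref{t.Mok} produces an open neighborhood of $x$ on which $\sC$ is $\bZ$-isotrivial and locally flat. By the canonical correspondence (Propositions \ref{p.HL235} and \ref{p.HLconverse}), $D$ itself is locally flat near $y$, so $\aut(D)_y$ is the $14$-dimensional simple Lie algebra of type $G_2$.

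The main obstacle is establishing the $Z$-isotriviality of $\sC$ near $x$ used in the previous paragraph, since a priori only the single fiber $\sC_x$ is controlled. The strategy is to exploit the large symmetry dimension: with $\dim \aut(\sC)_x \geq 8$ and $\dim X = 5$, an upper-semicontinuity / orbit-dimension argument at an $\aut$-generic $x$ should force the $\aut(\sC)$-orbit through $x$ to be open in $X$, whereupon symmetries carry $\sC_x$ onto all neighboring fibers and yield isotriviality. Making this rigorous — ruling out a small orbit with correspondingly large isotropy — is the delicate technical point. An alternative that sidesteps the issue is to apply the Tanaka prolongation / absolute-parallelism machinery of Section \ref{S:Tanaka} directly to the natural pseudo-product structure $(\bbP D; E, V)$ on $\sC$, whose symbol is $\fg_-$ for the $P_{1,2}$-grading of the Lie algebra of $G_2$, giving the bound $\dim \aut(D)_y \leq 14$ a priori and reducing the gap claim to the fiber analysis above.
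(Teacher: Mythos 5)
First, a point of orientation: the paper does not prove Theorem \ref{t.G2} at all --- it is quoted as a known symmetry gap result with a citation to \cite{Car1910, KT2014}, and is then \emph{used} as an input (e.g.\ in the proof of Theorem \ref{t.aut}(iii) and of Proposition \ref{P:LCgap}). So you are attempting an original proof via the paper's double-fibration machinery rather than reproducing anything in the text. Your overall architecture is plausible and, with care, non-circular: you correctly rely on the ``tedious direct calculation'' inside Theorem \ref{T:homZ} for the bound $\dim \finf{Z} \leq 2$ when $Z \not\cong \bZ$, rather than on Proposition \ref{P:LCgap}, whose proof uses Theorem \ref{t.G2} and would make the argument circular.

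However, there is a genuine gap, and it is exactly the one you flag: the application of Proposition \ref{p.flat}(i) requires the cone structure $\sC$ to be $Z$-isotrivial near $x$, and controlling the single fiber $\sC_x$ does not give this --- the fibers $\sC_{x'}$ could a priori vary in moduli (e.g.\ the invariant $\cI$ could be a nonconstant function on $X$). Your proposed repair, that $\dim \aut(\sC)_x \geq 8 > 5 = \dim X$ forces an open orbit, does not follow from dimension count alone: an $8$-dimensional symmetry algebra could have a $3$- or $4$-dimensional orbit with correspondingly large isotropy, and ruling this out is the entire content of the step. The tool the paper supplies for precisely this purpose is Theorem \ref{t.KT} (effectivity of the isotropy action on $\BP D_y$ for non-flat $D$), fed through Corollary \ref{c.bracket} and Proposition \ref{p.trans}: assuming $D$ non-flat for contradiction, $\dim \aut(D)^0_y \geq 8 - 5 = 3$ gives $\dim \aut(\sC)^0_x \geq 2$ and transitivity of $\sC$, and transitivity is what yields isotriviality of the fibers over an open set. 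With that route the argument closes (isotrivial and $Z \not\cong \bZ$ gives $\dim\aut(\sC)_x \leq 7$, a contradiction; isotrivial with $Z \cong \bZ$ gives flatness via Theorem \ref{t.Mok}, again a contradiction), but note that Theorem \ref{t.KT} is itself proved in Appendix \ref{A:235nf} by the prolongation-rigidity of $(G_2,P_1)$ --- the same engine behind the Kruglikov--The proof of the gap theorem you are trying to reprove --- so the ``conceptual'' gain is more modest than it appears. Your alternative suggestion (Tanaka prolongation applied to the pseudo-product structure on $\bbP D$) only yields the upper bound $14$ and does nothing to exclude the dimensions $8$ through $13$, so it does not reduce the gap claim to the fiber analysis. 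As written, the proposal is incomplete.
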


 Here, flatness refers to vanishing of the curvature of the corresponding regular, normal Cartan geometry of type $(G_2,P_1)$.  Otherwise:

 \begin{theorem}\label{t.KT}
Let $D \subset TM$ be a $(2,3,5)$-distribution that is not flat.  For a general $y \in M$, the action of a nonzero element $\vec{v} \in \aut(D)_y^0$ on $\BP D_y$ is nontrivial.
 \end{theorem}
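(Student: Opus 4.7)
The plan is to combine the Cartan quartic obstruction with the symmetry gap analysis of Kruglikov--The \cite{KT2014}. Recall that a $(2,3,5)$-distribution is the underlying structure of a regular, normal parabolic geometry of type $(G_2, P_1)$ on $M$, with fundamental harmonic curvature the Cartan quartic $Q$, a section of an appropriately weighted twist of $\Sym^4 D^*$; flatness of $D$ is equivalent to $Q \equiv 0$. Under the non-flatness hypothesis, $Q(y) \neq 0$ on a dense open subset $M^o \subset M$, and I would work throughout at a general $y \in M^o$.

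First I would take an arbitrary nonzero $\vec{v} \in \aut(D)_y^0$ and encode it via the canonical Cartan connection as an isotropy element $\xi \in \fp = \fg_0 \oplus \fg_1 \oplus \fg_2 \oplus \fg_3$, where $\fg_0 \cong \fgl(D_y)$ acts naturally on $\fg_{-1} = D_y$ and $\fg_1 \oplus \fg_2 \oplus \fg_3$ acts trivially on $D_y$. Since the projective action on $\BP D_y$ depends only on the $\fg_0$-projection of $\xi$ modulo the center $\C \cdot \id_{D_y}$, the hypothesis that this action is trivial becomes: the $\fg_0$-projection of $\xi$ is a scalar $c \cdot \id_{D_y}$, while $\xi$ may a priori carry an unconstrained component in $\fg_1 \oplus \fg_2 \oplus \fg_3$.

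Next I would invoke the ``grade-zero isotropy'' principle at non-flat points established in \cite{KT2014} via the Bianchi identity together with normality of the Cartan connection: at any point of nonzero harmonic curvature, the isotropy image is forced into $\fg_0$, so $\xi \in \fg_0$. Combining with the previous paragraph, $\xi = c \cdot \id_{D_y}$. A weight computation then closes the argument: the scalar $c \cdot \id_{D_y}$ acts on the weighted quartic bundle $\Sym^4 D_y^* \otimes L_y$ with a nonzero weight in $c$ (computable from the $(G_2, P_1)$-parabolic representation theory), so the condition that $\xi$ annihilate $Q(y) \neq 0$ forces $c = 0$, hence $\xi = 0$. Since the Cartan connection is an absolute parallelism, this forces $\vec{v} \equiv 0$ on a neighbourhood of $y$, contradicting $\vec{v} \neq 0$.

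The main obstacle will be verifying the grade-zero isotropy input at the level of generality needed; the weight computation itself is routine once that reduction is in hand. Should the direct parabolic-geometry argument prove unwieldy, a complementary approach is to pass via the canonical double fibration of Figure \ref{F:fibration-intro}: the lift $\check{v}$ of any such $\vec{v}$ to $\sC = \BP D$ would vanish on the entire fibre $\rho^{-1}(y)$ and descend to a nonzero $\vec{w} \in \aut(\sC)$ vanishing pointwise on the line $C_y \subset X$; one could then apply the analogous rigidity analysis to the regular, normal $(G_2, P_{1,2})$-parabolic geometry underlying the pseudo-product $(\sC; E, V)$, whose flatness is equivalent to that of $D$, and use Theorem \ref{t.HL} together with Proposition \ref{p.HL} (translated to $\sC$) to extract the required contradiction from the non-flatness of the associated Legendrian cone structure $\sC \subset \BP H$.
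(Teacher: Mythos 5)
Your proposal is correct and follows essentially the same route as the paper's proof in Appendix B: pass to the regular, normal $(G_2,P_1)$-Cartan geometry, use the Kruglikov--The constraint (prolongation-rigidity of $(G_2,P_1)$ together with $\fs(u)\subseteq\fa^{\kappa_H(u)}$) to force the isotropy of a symmetry at a non-flat point into grade zero inside the annihilator of the harmonic curvature, and then observe that a scalar on $D_y$ corresponds to the grading element, which cannot annihilate the nonzero Cartan quartic since $H_2(\fg_+,\fg)^1$ is positively graded. Your closing ``weight computation'' is precisely the paper's final step $\sfZ\not\in\fann(\kappa_H(u))$, so the two arguments coincide.
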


 This result is implicitly contained  in  \cite[Thm.4.2]{KT2018},  the fact that the symmetry algebra is 1-jet determined. For the reader's convenience, we provide a Cartan-geometric proof in Appendix \ref{A:235nf}.  These results will be used in Section \ref{S:MainResults} in the proofs of our main results.\\

 We turn now to (complex) multiply-transitive $(2,3,5)$-distributions.  These were classified in \cite{Car1910, DG} (see \cite{The2022} for a completeness argument).  Aside from the flat model, there are three classes: $\sfN7_c, \sfN6, \sfD6_a$ (with $c^2, a^2 \in \bbC$ the invariants classifying the structure), and Table \ref{F:Fstr} below is a Lie-theoretic presentation of the symmetry algebra $\ff$ as given in \cite[Table 6]{The2022}.  For each, $\ff$ is equipped with a decreasing filtration $\ff = \ff^{-3} \supset \ff^{-2} \supset \ff^{-1} \supset \ff^0$ and an adapted basis with
 \begin{align}
 \ff^{-3} / \ff^{-2} = \langle X_4, X_5 \rangle, \quad \ff^{-2} / \ff^{-1} = \langle X_3 \rangle, \quad \ff^{-1} / \ff^0 = \langle X_1, X_2 \rangle,
 \end{align}
 and the isotropy subalgebra $\ff^0 \subset \ff$ consisting of the remaining basis elements.  We get an $\ff^0$-invariant $(2,3,5)$-filtration on $\ff / \ff^0$.

 \begin{center}
 \begin{tiny}
 \begin{table}[h]
 \[
 \begin{array}{|c|l|}\hline
 \mbox{Label} & \mbox{Lie bracket on $\ff$, calculated via $[\cdot,\cdot]_\ff = [\cdot,\cdot] - \kappa(\cdot, \cdot)$}\\ \hline\hline
 \sfN7_c &
 \begin{array}{c|ccccccc}
 [\cdot,\cdot]_\ff & T & N  & X_1 & X_2 & X_3 & X_4 & X_5\\ \hline
 T & \cdot & -N & \cdot & -X_2 & -X_3 & -X_4 & -2 X_5\\
 N && \cdot & X_2 & \cdot & \cdot & -X_5 & \cdot\\
 X_1 &&&  \cdot & -3c N - 2 X_3 & -2c X_2 + 3 X_4 & -N + cX_3 & \cdot\\
 X_2 &&& & \cdot & -3 X_5 & \cdot & \cdot \\
 X_3 &&&&&\cdot & \cdot & \cdot\\
 X_4 &&&&&&\cdot & \cdot\\
 X_5 &&&&&&&\cdot
 \end{array}\\ \hline
 \sfN6 & \begin{array}{c|ccccccc}
 [\cdot,\cdot]_\ff & N  & X_1 & X_2 & X_3 & X_4 & X_5\\ \hline
 N & \cdot & X_2 & -2N & \cdot & -X_5 + N & \cdot\\
 X_1 &&  \cdot & -18N + 2X_1- 2X_3 & -12X_2 + 3 X_4 & -2X_1 + 6X_3 - 42 N & -X_4\\
 X_2 && & \cdot & 27 N - 3 X_5 & - X_2 - X_4 & -N + X_5 \\
 X_3 &&&&\cdot & -60 N + 6 X_3 & \cdot\\
 X_4 &&&&&\cdot & -24 N + 2 X_3 + 4 X_5\\
 X_5 &&&&&&\cdot
 \end{array}\\ \hline
 \sfD6_a & \begin{array}{c|cccccc}
 [\cdot,\cdot]_\ff & T & X_1 & X_2 & X_3 & X_4 & X_5\\ \hline
 T & \cdot & X_1 & -X_2 & \cdot & X_4 & -X_5\\
 X_1 &&  \cdot & 3aT - 2X_3 & \,\,\,\,2aX_1 + 3X_4  & \cdot & 6T - aX_3 \\
 X_2 && & \cdot & - 2aX_2 - 3X_5 & - 6 T + aX_3 & \cdot\\
 X_3 &&&&\cdot & -(a^2+3) X_1 & (a^2+3) X_2\\
 X_4 &&&&&\cdot & a(a^2-1) T - 2 X_3 \\
 X_5 &&&&&&\cdot
 \end{array}\\ \hline
 \end{array}
 \]
 \caption{Multiply-transitive $(2,3,5)$-structures: $\ff^{-1} / \ff^0 = \langle X_1, X_2 \rangle$, $\ff^{-2} / \ff^{-1} = \langle X_3 \rangle$, $\ff^{-3} / \ff^{-2} = \langle X_4, X_5 \rangle$.}
 \label{F:Fstr}
 \end{table}
 \end{tiny}
 \end{center}

 Let us lift these homogeneous structures $(M;D)$ to $(\bbP D;E,V)$.  We can do so algebraically via a generic choice of $\ell \in \bbP D|_o \cong \bbP(\ff^{-1} / \ff^0)$.  This induces on $\widetilde\ff = \ff$ a decreasing filtration $\widetilde\ff = \widetilde\ff^{-5} \supset ... \supset \widetilde\ff^{-1} \supset \widetilde\ff^0$ so that $\wD = E\oplus V$ satisfies  $\wD^i|_\ell \cong \widetilde\ff^{-i} / \widetilde\ff^0$ for $i \geq 1$.  See Table \ref{F:lifted-str} for the associated data.

 \begin{table}[h]
 \[
 \begin{array}{|c|c|c|c|c|c|c|c|}\hline
 \mbox{Model} & \ell \in \bbP(\ff^{-1} / \ff^{-0}) & \widetilde\ff^0 & E & V & \widetilde\ff^{-2} / \widetilde\ff^{-1} & \widetilde\ff^{-3} / \widetilde\ff^{-2} & \widetilde\ff^{-4} / \widetilde\ff^{-3} \\ \hline\hline
 \sfN7_c & X_1 & T & X_1 & N & X_2 & X_3 & X_4 \\
 \sfN6 & X_1 & \cdot & X_1 & N & X_2 & X_3 & X_4 \\
 \sfD6_a & X_1 + X_2 & \cdot & X_1 + X_2 & T & X_1 & X_3 & X_4 - X_5 \\ \hline
 \end{array}
 \]
 \caption{Multiply-transitive $(2,3,5)$-data lifted to $\bbP D$}
 \label{F:lifted-str}
 \end{table}

 On the leaf space $X = \bbP D / E$, $H = \wD^4 / E$ is a contact distribution equipped with a nondegenerate Legendrian cone structure $\cC \subset \bbP H$.  Its fibers $\sC_x$ are homogeneous: a symmetry generator $A$ in \eqref{E:wZ} arises from the infinitesimal action of $E$ on $H \cong \widetilde\ff^{-4} / (\widetilde\ff^0 + E)$ through the $A$-admissible base point $V \!\!\mod (\widetilde\ff^0 + E)$.  We compute the associated matrix for $A$ in the specified basis, and its minimal / characteristic polynomial $f_A(s)$.  Using \eqref{E:CI}, $q_0$ and $\cI$ can be efficiently computed from the coefficients of $f_A(s)$.  This yields Table \ref{F:MTinv}.

 \begin{center}
 \begin{footnotesize}
 \begin{table}[h]
 \[
 \begin{array}{|c|c|c|c|c|c|} \hline
 \mbox{Model} & \mbox{Basis} & A & f_A(s) & q_0 & \cI\\ \hline\hline
 \sfN7_c & N, X_2, X_3, X_4 &
 \begin{psm}
 0 & -3c & 0 & -1\\
 -1 & 0 & -2c & 0\\
 0 & -2 & 0 & c\\
 0 & 0 & 3 & 0
 \end{psm} & s^4 - 10 c s^2 + 9 c^2 + 6 & \neq 0 & -\frac{c^2}{6} \\ \hline
 \sfN6 & N, X_2, X_3, X_4 &
 \begin{psm}
 0 & -18 & 0 & -42\\
 -1 & 0 & -12 & 0\\
 0 & -2 & 0 & 6\\
 0 & 0 & 3 & 0
 \end{psm} & s^4 - 60 s^2 + 576 & \neq 0 &
 -\frac{1}{7}\\ \hline
 \sfD6_a & T, X_1, X_3, X_4 - X_5 &
 \begin{psm}
 0 & -3 a & 0 & -12\\
 -2 & 0 & 4a & 0\\
 0 & 2 & 0 & 2a\\
 0 & 0 & 3 & 0
 \end{psm} & s^4 - 20 a s^2 + 36 a^2 - 144 & \neq 0 &
 \frac{a^2}{36}\\ \hline
 \end{array}
 \]
 \caption{Symmetry generator $A$ for associated Legendrian curve in $\bbP^3$ arising from the $E$-action on $H$}
 \label{F:MTinv}
 \end{table}
 \end{footnotesize}
 \end{center}

 \begin{proposition} For all multiply-transitive $(2,3,5)$-distributions, the corresponding nondegenerate Legendrian cone structure $(X,H,\cC)$ of VMRT type is $Z$-isotrivial and multiply-transitive.  For $\sfN7_c$, it is locally flat, while for $\sfN6$ and $\sfD6_a$ it is not.  If $c^2 = -\frac{a^2}{6}$, then the Legendrian curves $Z$ associated to $\sfN7_c$ and $\sfD6_a$ are homogeneous and projectively equivalent.  If moreover $c^2 = \frac{6}{7}$, then this is also projectively equivalent to the Legendrian curve associated to $\sfN6$.
 \end{proposition}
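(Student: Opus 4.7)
My plan is to proceed in four main steps, leveraging the tabulated data and the general results from Sections 2, 4, and 5.

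\smallskip

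\textbf{Step 1 ($Z$-isotriviality and multi-transitivity of $\sC$).} First I would invoke the canonical isomorphism \eqref{E:CDsym} arising from the double fibration in Figure \ref{F:fibration}. Since $D$ is multiply-transitive in all three cases, $\dim\aut(D)\geq 6 > 5 = \dim Y$, so $\aut(D)$ acts transitively on $Y$ and via \eqref{E:CDsym} the induced symmetry algebra acts transitively on $X$ (and $\dim \aut(\sC)_x = \dim \aut(D)_y \geq 6$). Transitivity on $X$ lets one transport $\sC_{x_0}$ to any nearby $\sC_x$ via a symplectic isomorphism of the contact distributions, giving $Z$-isotriviality with $Z := \sC_{x_0}$. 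Since $\dim\aut(\sC)_x > \dim X$, the isotropy $\aut(\sC)^0_x$ is nonzero, so $\sC$ is multiply-transitive.

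\smallskip

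\textbf{Step 2 (Computation of $A$, $q_0$, $\cI$).} Next I would carry out the matrix computation underlying Table \ref{F:MTinv}. Starting from Table \ref{F:Fstr}, pick a generic line $\ell \in \bbP(\ff^{-1}/\ff^0)$ as in Table \ref{F:lifted-str}, form the induced filtration $\widetilde\ff^\bullet$, and identify $E\oplus V = \wD$, with contact space $H \cong \widetilde\ff^{-4}/(\widetilde\ff^0 + E)$. The infinitesimal $E$-action on $H$, expressed in the ordered basis chosen in Table \ref{F:MTinv}, yields the $4\times 4$ matrix $A$, whose characteristic polynomial $f_A(s) = s^4 + c_2 s^2 + c_0$ is then read off directly. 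Finally, applying \eqref{E:CI} with these $(c_2, c_0)$ yields the tabulated $q_0$ and $\cI$ values: $\cI = -c^2/6$ for $\sfN7_c$, $\cI=-1/7$ for $\sfN6$, $\cI = a^2/36$ for $\sfD6_a$, with $q_0 \not\equiv 0$ in every case.

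\smallskip

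\textbf{Step 3 (Local flatness).} Since $q_0\neq 0$, Theorem \ref{T:homZ} gives $\dim\finf{Z} = 2$ for each associated Legendrian curve. By Proposition \ref{p.flat}(ii), the $Z$-isotrivial structure $\sC$ is locally flat iff $\dim\aut(\sC)_x = 5 + \dim\finf{Z} = 7$. For $\sfN7_c$ we have $\dim\aut(D) = 7 = \dim\aut(\sC)_x$, hence local flatness; for $\sfN6$ and $\sfD6_a$, $\dim\aut(\sC)_x = 6 < 7$, so these are not locally flat.

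\smallskip

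\textbf{Step 4 (Projective equivalence via $\cI$).} By Theorem \ref{T:homZ}, two homogeneous nondegenerate Legendrian curves with $q_0 \neq 0$ are projectively equivalent iff their absolute invariants $\cI$ coincide. Equating $-c^2/6 = a^2/36$ gives precisely $c^2 = -a^2/6$, proving equivalence between the curves of $\sfN7_c$ and $\sfD6_a$ under this constraint. Setting further $\cI = -1/7$ forces $c^2 = 6/7$ (and equivalently $a^2 = -36/7$), at which point all three curves share $\cI = -1/7$ and are mutually projectively equivalent.

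\smallskip

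\textbf{Main obstacle.} The one genuinely computational step is Step 2: producing the matrix $A$ requires care in choosing the adapted basis for the quotient $\widetilde\ff^{-4}/(\widetilde\ff^0+E)$ so that the resulting $A$ lies in $\fsp(4)$ with respect to a natural symplectic form, and in tracking the Lie brackets from Table \ref{F:Fstr} through four successive quotients. Everything else is an application of previously established results (Theorem \ref{T:homZ}, Proposition \ref{p.flat}(ii), and \eqref{E:CI}).
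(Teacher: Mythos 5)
Your proposal is correct and follows essentially the same route as the paper: compute the symmetry generator $A$ and its characteristic polynomial from the Lie-theoretic data of Tables \ref{F:Fstr}--\ref{F:lifted-str}, extract $(q_0,\cI)$ via \eqref{E:CI}, classify the curves by Theorem \ref{T:homZ}, and settle local (non-)flatness by \eqref{E:CDsym} together with Proposition \ref{p.flat}(ii). One small caution in Step 1: the inference ``$\dim\aut(D)\geq 6>5$ so $\aut(D)$ acts transitively'' is not valid on its own (cf.\ the nontransitive structures with $\dim\aut(D)=6$ in Theorem \ref{t.aut}(iv)), but transitivity is already built into the definition of multiply-transitive, so your conclusion stands.
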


 \begin{proof} The $(q_0,\cI)$ data computed in Table \ref{F:MTinv} classifies via Theorem \ref{T:homZ} the associated homogeneous non-degenerate Legendrian curves $Z \subset \bbP^3$.  Assertions of local (non-)flatness follows from \eqref{E:CDsym} and Proposition \ref{p.flat}(ii).
 \end{proof}

 \begin{remark}\label{r.Heis}
 It is known \cite{DK2014} by an explicit Lie algebra computation that the symmetry algebra of  the $\sfN7_c$ case is isomorphic to $\heis \rtimes \bbC^2$.  The local flatness of the corresponding cone structure implies that it is locally the $Z$-isotrivial structure $(\Heis, \sH, \sC^Z)$.  This gives a conceptual explanation why the symmetry algebra is isomorphic to $\heis \rtimes \bbC^2$.  Namely, $\heis$ arises from infinitesimal Heisenberg translations on $\Heis$, while the abelian subalgebra $\bbC^2$ arises from symmetries of the homogeneous Legendrian curve $Z \subset \bbP^3$ (which is not rational normal).  The action of $\bbC^2$ on $\heis$ depends on the parameter $c$.
 \end{remark}

 As an example, consider two (real) 2-spheres with ratio of radii $\rho > 0$ rolling on each other without twisting or slipping \cite{Agr2007}, \cite{BH2014}, \cite{BM2009}.  There is an associated multiply-transitive $(2,3,5)$-distribution called the {\sl rolling distribution}.  From \cite[(5.9)]{The2022}, it is generally a real form of $\sfD6_a$
with the relation
 \begin{align}
 \frac{a^2}{36} = \frac{(\rho^2+1)^2}{(\rho^2 - 9)(9\rho^2 - 1)}, \quad \rho^2 \not\in \{ 9, \tfrac{1}{9} \}.
 \end{align}
 The associated homogeneous nondegenerate Legendrian cone structure $\cC \subset \bbP H$ is $Z$-isotrivial, with (the complexification of) $Z \subset \bbP^3$ having $\cI = \frac{a^2}{36}$ for $\sfD6_a$.  From the formula $\cI = \tfrac{(r^2+1)^2}{(r^2-9)(9r^2-1)}$ in Theorem \ref{T:homZ}, we have $r^2 = \rho^2$ or $\rho^{-2}$.  Thus, it is of type $\bL_{\rho^2}$, i.e.\ it is (complex) projectively equivalent to $Z \subset \bbP^3$ arising from $\gamma(t) = \exp(tA) z$, where
 \begin{align}
 A = \diag(\rho,1,-1,-\rho), \quad z = (1,1,1,1)^\top, \quad \rho^2 \not\in \{ 9, \tfrac{1}{9} \}.
 \end{align}
 The exceptional cases $\rho = 3$ or $\rho = \tfrac{1}{3}$ occur for the rational normal curve $\bZ$.

\section{Proofs of Main Results}
\label{S:MainResults}

\begin{proposition}\label{p.trans}
Let $D \subset TM$ be a non-flat $(2,3,5)$ distribution and $(X,H,\sC)$ the associated nondegenerate Legendrian cone structure of VMRT type.
 \begin{itemize}
 \item[(i)] If $\dim \aut(D)^0_y \geq 1 $ for a general point $y \in M$, then $\dim \aut(\sC)_x^0 \geq \dim \aut(D)_y^0 -1 $ for a general point $ x\in X$ and $\sC \subset \BP H$ is transitive.
 \item[(ii)] If $\aut(\sC)^0_x$ acts nontrivially on $\sC_x$ for a general $x \in X$,  then $D \subset T Y$ is transitive.
 \item[(iii)] If $\dim \aut(\sC)^0_x \geq 2$, then $D \subset TY$ is transitive.
 \end{itemize}
 \end{proposition}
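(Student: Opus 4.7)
The plan is to use the natural isomorphism \eqref{E:CDsym} to identify $\aut(\sC)_x$ and $\aut(D)_y$ with a common Lie algebra $\mathfrak{L}$, whose elements $\vec{v}$ are realized through their canonical lifts $\vec{v}'$ to $\sC$.  Fix a general $\ell \in \sC$ with $\rho(\ell) = y$ and $\mu(\ell) = x$.  The decomposition $\wD_\ell = E_\ell \oplus V_\ell$ from Proposition \ref{p.bracket} yields
\begin{align*}
\aut(D)^0_y &= \{\vec{v} \in \mathfrak{L} : \vec{v}'(\ell) \in V_\ell\}, &
\aut(\sC)^0_x &= \{\vec{v} \in \mathfrak{L} : \vec{v}'(\ell) \in E_\ell\},
\end{align*}
whose intersection is $\mathfrak{L}^{00}_\ell := \{\vec{v} \in \mathfrak{L} : \vec{v}'(\ell) = 0\}$, and the quotients $\aut(D)^0_y / \mathfrak{L}^{00}_\ell$ and $\aut(\sC)^0_x / \mathfrak{L}^{00}_\ell$ inject into the one-dimensional spaces $V_\ell$ and $E_\ell$ respectively.

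For part (i), the inequality $\dim \aut(\sC)^0_x \geq \dim \aut(D)^0_y - 1$ is immediate from $\mathfrak{L}^{00}_\ell \subseteq \aut(\sC)^0_x$ together with $\dim \mathfrak{L}^{00}_\ell \geq \dim \aut(D)^0_y - 1$.  The transitivity of $\sC$ is clear in the flat case; otherwise, Theorem \ref{t.KT} forces the action of $\aut(D)^0_y$ on $\BP D_y = \rho^{-1}(y)$ to be nontrivial, so $V_\ell \subseteq \mathrm{ev}_\ell(\mathfrak{L})$.  Pushing down via $\mu$ and using the nondegeneracy of $\sC_x \subset \BP H_x$ (Proposition \ref{p.HL235}), the evaluations arising from the isotropies at the various base points $\rho(\ell)$, as $\ell$ ranges over $\sC_x$, span the contact distribution $H_x$; a further orbit-dimension argument, using that $\aut(D)$-orbits in $M$ escape $D^2_y$ when nontrivial isotropy is present, supplies a direction transverse to $H_x$ at $x$, yielding $o_X = 5$.

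For parts (ii) and (iii), the strategy is to place $\wD_\ell$ inside $\mathrm{ev}_\ell(\mathfrak{L})$ and to invoke the bracket-generating property of $\wD$ (Proposition \ref{p.bracket}): once $\wD_\ell \subseteq \mathrm{ev}_\ell(\mathfrak{L})$ on an open set of $\sC$, iterated Lie brackets of the orbit-tangent distribution generate $T\sC$, forcing the $\mathfrak{L}$-orbit to be open in $\sC$ and hence $o_\sC = 6$, whence $o_M = o_X = 5$.  The hypothesis of (ii) directly yields $E_\ell \subseteq \mathrm{ev}_\ell(\mathfrak{L})$; to produce $V_\ell$ one shows $\aut(D)^0_y \neq 0$ and then applies Theorem \ref{t.KT}.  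For (iii), injectivity of $\mathbf{jet}^H_x$ (Corollary \ref{c.Tanaka}(i)) combined with the bound $\dim \finf{Z} \leq 2$ for homogeneous $Z \neq \bZ$ (Theorem \ref{T:homZ}) forces, when $\dim \aut(\sC)^0_x \geq 2$, the image of $\mathbf{jet}^H_x$ to equal $\finf{Z}$ and hence to contain a non-scalar element, so the hypothesis of (ii) is met.  The rational normal case $Z = \bZ$ is handled separately via Theorem \ref{t.Mok}, which gives local flatness of $\sC$ and hence flatness, and transitivity, of $D$.

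The main obstacle will be the step in (ii) guaranteeing $\aut(D)^0_y \neq 0$ under the sole hypothesis $\dim \aut(\sC)^0_x = 1$ with nontrivial action on $\sC_x$: in that configuration $\mathfrak{L}^{00}_\ell$ could vanish, and Theorem \ref{t.KT} is not directly applicable.  Resolving this edge case will require exploiting the classification of homogeneous Legendrian curves (Theorem \ref{T:homZ}), the Cartan-geometric constraints from the invariants $\tau_\ell$ in Theorem \ref{t.HL}, and possibly Proposition \ref{p.HL} to rule out symmetry algebras that are too small.
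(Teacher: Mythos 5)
Your reduction of the two isotropy algebras via the splitting $\wD_\ell = E_\ell \oplus V_\ell$ is sound and gives the inequality in (i) essentially as the paper does. But your strategy for transitivity --- making the evaluation of the single Lie algebra $\mathfrak{L} = \aut(\sC)_x \cong \aut(D)_y$ span enough directions that one orbit becomes open --- collapses exactly where you flag it, in part (ii), and that ``edge case'' is in fact the main case the proposition must cover. The hypothesis of (ii) only yields $E_\ell \subseteq \mathrm{ev}_\ell(\mathfrak{L})$; with no control on $V_\ell$, the $\mathfrak{L}$-orbit in $\sC$ could a priori be a single integral curve of $E$ (one abnormal extremal), whose $\rho$-image is a single curve in $Y$ --- nowhere near an open orbit. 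Your proposed repairs (the classification of homogeneous Legendrian curves, the invariants $\tau_\ell$, Proposition \ref{p.HL}) are not carried out and would not obviously close this; since (iii) is reduced to (ii), it inherits the same gap.

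The missing idea is Corollary \ref{c.bracket}. The paper never tries to make one isotropy algebra act with an open orbit; it observes that the hypothesis of (ii) makes the germs of $D$ at any two general points of the single curve $\rho(\mu^{-1}(x))$ equivalent (this uses only the $E$-direction), and then joins two general points of $Y$ by a \emph{chain} of such curves based at \emph{different} points $x$ --- such chains exist because $\wD = E \oplus V$ bracket-generates $T\sC$ (Proposition \ref{p.bracket}). Local homogeneity of $D$ on a dense open subset of $Y$ then gives transitivity; the same mechanism run in the other direction, with Theorem \ref{t.KT} supplying the motion along the fibers of $\rho$, yields the transitivity of $\sC$ in (i) and replaces your unexplained ``direction transverse to $H_x$'' step (if one insists on your route there, the clean argument is that an orbit whose tangent spaces contain $H$ cannot be $4$-dimensional since $H$ is contact). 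A further soft spot: in (iii) you invoke Corollary \ref{c.Tanaka}(i), which is only available for $Z$-isotrivial structures via Lemma \ref{l.Z-iso}, whereas the fibers $\sC_x$ of a general VMRT-type cone structure need not be mutually isomorphic; the relevant elementary fact is that an element of $\aut(\sC)^0_x$ acting trivially on the nondegenerate curve $\sC_x$ has scalar $1$-jet on $H_x$, which bounds the kernel of the action on $\sC_x$ without any isotriviality assumption.
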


\begin{proof}
By Theorem \ref{t.KT}, the Lie algebra  $\aut(D)_y^0$ for a general $y \in Y$ acts effectively on $\BP D_y \cong C_y \subset X$. Thus $\dim \aut(\sC)_x^0 \geq \dim \aut(D)_y^0 -1 $ for a general point $ x\in X$.
Moreover, by Corollary \ref{c.bracket}, there exists a dense open subset $X^o \subset X$ such that the germ of $\sC$ at any two points of $X^o$ are equivalent. This implies that $\sC \subset \BP H$ is transitive, proving (i).

 If $\aut(\sC)^0_x$ acts nontrivially on $\sC_x$ for a general $x \in X$,
then by Corollary \ref{c.bracket}, there exists a dense open subset $Y^o \subset Y$ such that the germ of $D \subset TY$ at any two points of $Y^o$ are equivalent. This implies that $D \subset TY$ is transitive, proving (ii).

If $\dim \aut(\sC)_x^0 \geq 2$, then some element of $\aut(\sC)_x^0$ acts nontrivially on $\sC_x$. Thus (iii) follows from (ii). \end{proof}

\begin{proof}[Proof of Theorem \ref{t.aut}] Let $y \in M$ be a general point.
If $\dim \aut(D)^0_y \geq 1$, then by the transitivity of $\sC \subset \BP H$ in Proposition \ref{p.trans}, we have $\dim \aut(D) = \dim \aut(\sC) \geq 5$.
This proves (i).
If $\dim \aut(D)_y^0 \geq 2$, Proposition \ref{p.trans} gives  $\dim \aut(\sC)_x^0 \geq 1$ for a general point $ x\in X$.  Thus $\dim \aut(\sC) \geq 6$, which implies $\dim \aut(D) \geq 6$, proving (ii).
If $\dim \aut(D)_y^0 \geq 3$, Proposition \ref{p.trans} gives $\dim \aut(\sC)_x^0 \geq 2$. Thus both $\sC \subset \BP H$ and  $D \subset TY$ are transitive by Proposition \ref{p.trans}. Hence $$\dim \aut(D) \geq 5 + \dim \aut(D)^0_y \geq 8.$$ This implies that $D$ is flat, proving (iii).

If $D$ is nontransitive, then $5 = \dim M >  \dim \aut(D) - \dim \aut(D)^0.$  If furthermore $\dim \aut(D) =6$, then $\dim \aut(D)^0 > \dim \aut(D) - 5 =1$ and (iii) give $\dim \aut(D)^0 =2.$   This proves (iv).
\end{proof}

\begin{proof}[Proof of Theorem \ref{t.1to1}]
Since $D$ is nontransitive and $\dim \aut(D) =6$, we have $\dim \aut(D)^0 =2$ by Theorem \ref{t.aut}. Thus $\sC$ is transitive and $\aut(\sC)_x^0 \neq 0$ for a general $x \in X$ by Proposition \ref{p.trans}. By Proposition \ref{p.trans} (ii), we know that $\aut(\sC)_x^0$ acts trivially on $\sC_x$. Thus $\sC \subset \BP H$ is locally flat by Proposition \ref{p.HL} and $6= \dim \aut(\sC) = 5 + \dim \finf{Z}$.  Thus  $Z$ is not homogenous.

Given an $\aut$-generic point $y \in Y$, pick a general point $x \in C_y \subset X$ and consider the germ of the nonhomogeneous Legendrian curve $\sC_x \subset \BP H_x$ at the point $[\BP T_x C_y]$. The equivalence class (up to contacto-isomorphisms of $\BP H_x$) of this germ of Legendrian curves does not depend on the choice of $x$ by the local flatness of $\sC \subset \BP H$. Conversely, given a nonhomogeneous Legendrian curve $Z \subset \BP^3$ and a nondegenerate  point $z \in Z$, we use Theorem \ref{t.Heisenberg} to find a germ of  $(2,3,5)$-distributions $D \subset TY$  with $\dim \aut(D) =6$.  Then $\dim \aut(D)^0_y =2$ from Proposition \ref{p.dim}. It is clear that this gives a one-to-one correspondence between the equivalence classes.
\end{proof}

In Theorem \ref{T:homZ}, we established that for a nondegenerate Legendrian curve $Z \subset \bbP^3$, $\dim \finf{Z} = 3$ is impossible, but this required a straightforward (but tedious) check that $\dim \finf{Z} = 2$ in the homogeneous case when $Z \not\cong \bZ$.  Below, we confirm this as an easy consequence of our constructions and the symmetry gap for $(2,3,5)$-distributions.

\begin{proposition} \label{P:LCgap}
Let $Z \subset \bbP^3$ be a germ of a nondegenerate Legendrian curve with $\dim \finf{Z} \geq 3$.  Then $Z$ is isomorphic to the germ of a rational normal curve $\bZ$ and $\dim \finf{Z} = 4$.
\end{proposition}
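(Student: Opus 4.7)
The plan is to use the canonical double fibration (Figure~\ref{F:fibration-intro}) to convert the symmetry bound on $Z$ into a symmetry bound on an associated $(2,3,5)$-distribution, where the well-known symmetry gap (Theorem~\ref{t.G2}) applies.

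First, I would feed the germ $Z \subset \BP^3$ into Theorem~\ref{t.Heisenberg} to produce a $5$-dimensional contact manifold $(X,H)$ together with a $Z$-isotrivial Legendrian cone structure $\sC \subset \BP H$ of VMRT type whose restriction to the open orbit $X^o \cong \Heis$ is the flat model $\sC^Z$ of Definition~\ref{d.flatcone}.  In particular $\sC$ is locally flat and $Z$-isotrivial.  Via Proposition~\ref{p.HL235}, the canonical double fibration then yields a nondegenerate $(2,3,5)$-distribution $D$ on some $5$-manifold $Y$, and the isomorphism $\aut(\sC)_x \cong \aut(D)_y$ from \eqref{E:CDsym} relates the symmetries on the two sides of the fibration.

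Next I would argue by contradiction: suppose that the germ of $Z$ is \emph{not} equivalent to that of the rational normal curve $\bZ$.  Then Proposition~\ref{p.flat}(ii) applies, and combined with local flatness of $\sC$ gives
\[
\dim \aut(D)_y \;=\; \dim \aut(\sC)_x \;=\; 5 + \dim \finf{Z} \;\geq\; 8.
\]
The symmetry gap Theorem~\ref{t.G2} then forces $D$ to be the flat $(2,3,5)$-distribution.  By the canonicity of the double fibration, this means that $\sC$ is equivalent to the flat Legendrian cone structure associated with $G_2/P_2$, whose fibers are rational normal curves $\bZ$ (as recalled in the introduction just after Figure~\ref{F:fibration-flat}).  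Since $\sC$ was $Z$-isotrivial, the fibers of $\sC$ are simultaneously equivalent to $Z$ and to $\bZ$, forcing $Z \cong \bZ$ and contradicting our standing assumption.  Thus $Z \cong \bZ$, and Example~\ref{e.Sym3} then gives $\dim \finf{Z} = \dim \fgl(2) = 4$.

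The main obstacle is the final implication ``$D$ flat $\Rightarrow$ $\sC$ has $\bZ$-fibers''; everything else is a routine combination of Theorem~\ref{t.Heisenberg}, Proposition~\ref{p.flat}(ii), \eqref{E:CDsym}, and Theorem~\ref{t.G2}.  This step relies on the canonicity of the double fibration $X \leftarrow \bbP D \to Y$, which ensures that flatness of $D$ (local equivalence with the $(G_2,P_1)$-model) transports along the fibration to give local equivalence of $(X,H,\sC)$ with the flat $(G_2,P_2)$-model, whose cone structure is $\bZ$-isotrivial.
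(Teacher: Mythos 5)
Your proposal is correct and follows essentially the same route as the paper: Theorem \ref{t.Heisenberg} produces the locally flat $Z$-isotrivial cone structure, \eqref{E:CDsym} and Proposition \ref{p.flat}(ii) give $\dim\aut(D)_y = 5 + \dim\finf{Z} \geq 8$ under the assumption $Z \not\cong \bZ$, and Theorem \ref{t.G2} forces flatness of $D$, hence $Z \cong \bZ$. The step you flag as the "main obstacle" (flat $D$ implies $\bZ$-fibers) is exactly the step the paper passes over with "so $Z \cong \bZ$", and your justification via canonicity of the double fibration is the intended one.
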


 \begin{proof}
By Theorem \ref{t.Heisenberg}, we have a $Z$-isotrivial  Legendrian cone structure $\sC \subset \BP H$ of VMRT-type on a contact manifold $(X, H)$ that is locally flat on a dense open subset $U \subset X$.  Let $D \subset TY$ be the associated $(2,3,5)$-distribution.  Assume that $Z \not\cong \bZ$.  By \eqref{E:CDsym} and Proposition \ref{p.flat}, $\dim \aut(D)_y = \dim \aut(\sC)_x = 5 + \dim \finf{Z}$ for $y \in Y$ with $x \in C_y \subset U$.  If $\dim \finf{Z} \geq 3$, then $\dim \aut(D)_y \geq 8$.  By Theorem \ref{t.G2}, $D \subset TY$ is flat and so $Z \cong \bZ$.
 \end{proof}

\section*{Acknowledgments}

We thank Boris Doubrov for enlightening discussions, particularly during a visit to Daejeon in March 2024.  Jun-Muk Hwang's research has been supported by the Institute for Basic Science (IBS-R032-D1).  Dennis The's research leading to these results has received funding from the Norwegian Financial Mechanism 2014-2021 (project registration number 2019/34/H/ST1/00636), the Troms\o{} Research Foundation (project ``Pure Mathematics in Norway''), the UiT Aurora project MASCOT, and this article/publication is based upon work from COST Action CaLISTA CA21109 supported by COST (European Cooperation in Science and Technology), \href{https://www.cost.eu}{https://www.cost.eu}.

\appendix

\section{Some Maple code}
 \subsection{The relative invariant $\cR$ has weight 10}
 \label{S:R}

We saw in Proposition \ref{P:relinv}(ii) that $\cR = 8 q_0 q_0'' - 9 (q_0')^2$ has weight 10.  Here is Maple code that establishes this:

\begin{verbatim}
restart: with(DifferentialGeometry): with(JetCalculus):
Preferences("JetNotation", "JetNotation2"):
DGsetup([t],[q0],J,2):
DGsetup([T],[Q0],K,2):
L:=(a*t+b)/(c*t+d):
DL:=diff(L,t):
phi:=Transformation(J,K,[T=L,Q0[0]=q0[0]/DL^4]):
phi2:=Prolong(phi,2):
newR:=8*Q0[0]*Q0[2]-9*Q0[1]^2:
simplify(Pullback(phi2,newR));
\end{verbatim}
This yields the result
\begin{align}
\frac{(ct + d)^{20}}{(ad-bc)^{10}} (8 q_0 q_0'' - 9 (q_0')^2),
\end{align}
which is the same as $\frac{\cR}{(\lambda')^{10}}$.

 \subsection{Transformation to Laguerre--Forsyth canonical form}
 \label{S:LF}

 We describe here how \eqref{E:LFtransf} was found, defining a transformation $(\widetilde{t},\widetilde{u}) = (\lambda(t), \mu(t) u)$ that brings \eqref{E:homODE} to Laguerre--Forsyth canonical form.  This is easily accomplished with the aid of Maple.
 \begin{verbatim}
 restart: with(DifferentialGeometry): with(JetCalculus):
 Preferences("JetNotation", "JetNotation2"):
 DGsetup([t],[u],J,4): DGsetup([T],[U],K,4):
 phi:=Transformation(J,K,[T=lambda(t),U[0]=mu(t)*u[0]]):
 phi4:=Prolong(phi,4):
 newODE:=U[4]+Q0(T)*U[0]:
 oldODE:=simplify(Pullback(phi4,newODE)):
 cf:=i->simplify(diff(oldODE,u[i])):
 0=expand(cf(3)/cf(4));
 \end{verbatim} \vspace{-0.1in}
 The last line comes from \eqref{E:homODE} having no $u'''$ term.  We obtain
 \begin{align}
 0 = -\frac{6\lambda''}{\lambda'} + \frac{4\mu'}{\mu},
 \end{align}
 which has solution $\mu = r (\lambda')^{3/2}$, so we continue with:
 \begin{verbatim}
 eval(cf(2)/cf(4),mu(t)=r*diff(lambda(t),t)^(3/2)):
 -a^2-b^2=expand(%);
 \end{verbatim} \vspace{-0.1in}
 This yields:
 \begin{align}
 -a^2-b^2 = \frac{5 \lambda'''}{\lambda'} - \frac{15 (\lambda'')^2}{2 (\lambda')^2}
 \end{align}
 Solving this ODE (using the \verb|dsolve| command) yields a 3-parameter family of solutions.   When $a^2+b^2 \neq 0$, \eqref{E:LFtransf} is one such solution.

\section{Proof of Theorem \ref{t.KT}}
\label{A:235nf}

\subsection{Prolongation-rigidity} Let $G$ be a semisimple Lie group and $P \subset G$ a parabolic subgroup.  Let $\fg = \fg_{-\mu} \oplus ... \oplus \fg_\mu$ be the associated Lie algebra grading induced by a grading element $\sfZ \in \fz(\fg_0)$, with parabolic subalgebra $\fp = \fg_{\geq 0}$ the Lie algebra of $P$, and associated $P$-invariant filtration $\fg = \fg^{-\mu} \supset ... \supset \fg^0 \supset ... \supset \fg^\mu$.

 \begin{definition}
 Given a $\fg_0$-representation $\bbV$, and any $\phi \in \bbV$, we  define the {\sl extrinsic Tanaka prolongation} $\fa^\phi \subset \fg$ as the graded subalgebra with:
 \begin{enumerate}
 \item[(i)] $\fa^\phi_{\leq 0} := \fg_- \oplus \fann(\phi)$.
 \item[(ii)] $\fa^\phi_i := \{ x \in \fg_i : [x,\fg_{-1}] \subset \fa^\phi_{i-1} \}$ for all $i > 0$.
 \end{enumerate}
 \end{definition}

Consider the chain complex $C_\bullet = \bigwedge^\bullet \fg_+ \otimes \fg$ with differential $\partial^* : C_\bullet \to C_{\bullet-1}$  (from page 262 of \cite{CS2009}).  The quotient of 2-cycles (i.e.\ {\sl normal} elements) modulo 2-boundaries is the homology space $H_2(\fg_+,\fg)$. 
 Let $H_2(\fg_+,\fg)^1 \subset H_2(\fg_+,\fg)$ be the subspace on which $\sfZ$ acts with positive eigenvalues.

 \begin{definition}
 We say that $(G,P)$ is {\sl prolongation-rigid} if $\fa^\phi_+ = 0$ for any nonzero element $\phi$ of the $\fg_0$-representation  $H_2(\fg_+,\fg)^1$.
 \end{definition}

 Prolongation-rigidity was introduced in \cite[\S 3.4]{KT2014} and investigated there in detail.  Relevant to us for $(2,3,5)$-distributions is the following special case of \cite[Corollary 3.4.8]{KT2014}.

 \begin{lemma}\label{l.PR}
 $(G_2,P_1)$ is prolongation-rigid.
 \end{lemma}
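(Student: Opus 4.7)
The plan is to identify $H_2(\fg_+, \fg)^1$ explicitly via Kostant's theorem, pin down the annihilator of a non-zero element, and then propagate vanishing up the positive part of the $G_2$-grading using the Tanaka-prolongation property of $\fg$. By Kostant's version of the Bott--Borel--Weil theorem, $H_2(\fg_+, \fg)$ decomposes as a sum of irreducible $\fg_0$-modules, and for $(G_2, P_1)$ the positive-weight summand $H_2(\fg_+, \fg)^1$ is a single $5$-dimensional irreducible $\fg_0$-module sitting in $\sfZ$-weight $4$, isomorphic to $\Sym^4 W^*$ as a module over $\fg_0^{ss} \cong \fsl(W)$ with $W = \fg_{-1}$ (this is the classical home of the Cartan quartic). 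Since $\sfZ$ acts on any $0 \neq \phi$ by the scalar $4$, we have $\sfZ \notin \fann(\phi)$, and so $\fa^\phi_0 = \fann(\phi) \subset \fsl(W)$.

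The key step is to prove $\fa^\phi_1 = 0$. I would decompose $\fg_0 = \fsl(W) \oplus \bbC \sfZ$ and consider the bilinear map $c : \fg_1 \otimes \fg_{-1} \to \bbC$ extracting the $\sfZ$-component of the bracket. As $\fsl(W)$-modules, $\fg_1 \cong W^*$ and $\fg_{-1} \cong W$, so by Schur's lemma $c$ is a scalar multiple of the canonical pairing $W^* \otimes W \to \bbC$; a brief explicit bracket computation (for instance, $[E_{\alpha_1}, F_{\alpha_1}] = H_{\alpha_1}$ together with the verification that $H_{\alpha_1}$ has non-zero $\sfZ$-component in the decomposition of the Cartan) shows the scalar is non-zero, so $c$ is a perfect pairing. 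It follows that for any $0 \neq x \in \fg_1$, some $y \in \fg_{-1}$ gives $c(x, y) \neq 0$; then $[x, y]$ has non-zero $\sfZ$-component and hence does not lie in $\fa^\phi_0 \subset \fsl(W)$. Thus $\fa^\phi_1 = 0$.

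The vanishings $\fa^\phi_2 = \fa^\phi_3 = 0$ then follow automatically: the $(G_2, P_1)$-grading satisfies $\fg = \pr(\fg_-)$ (the middle diagram of Figure \ref{F:G2pr}), so $\ad(x) : \fg_{-1} \to \fg_{i-1}$ is injective for every non-zero $x \in \fg_i$ with $i \geq 1$. Hence once $\fa^\phi_1 = 0$, the recursive definition $\fa^\phi_{i+1} = \{ x \in \fg_{i+1} : [x, \fg_{-1}] \subset \fa^\phi_i \}$ forces $\fa^\phi_{i+1} = 0$ by induction, giving $\fa^\phi_+ = 0$. The main obstacle in this program is the non-degeneracy assertion for $c$: Schur's lemma alone determines $c$ only up to a scalar, and ruling out a vanishing scalar requires an explicit root-system computation. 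The rest reduces to Kostant and to the well-known Tanaka maximality of the $P_1$-grading on the simple Lie algebra of type $G_2$.
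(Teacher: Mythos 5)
The paper does not actually prove this lemma --- it cites \cite[Corollary 3.4.8]{KT2014} --- so there is no internal proof to compare against; your overall skeleton (Kostant's theorem to identify $H_2(\fg_+,\fg)^1$ as the $5$-dimensional binary-quartic module in $\sfZ$-weight $4$, then killing $\fa^\phi_1$ and propagating upward via the prolongation property of the $P_1$-grading) is the right one and is essentially how the cited result is obtained. The identification of the module, the non-degeneracy of the pairing $c$, and the inductive step $\fa^\phi_1 = 0 \Rightarrow \fa^\phi_{\geq 2} = 0$ are all fine (for the induction you only need $[x,\fg_{-1}] \neq 0$ for $0 \neq x \in \fg_i$, $i \geq 1$, which is exactly property (b) of the Tanaka prolongation; injectivity of $\ad(x)$ is not needed and is not what $\fg = \pr(\fg_-)$ gives you).

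The genuine gap is the assertion $\fa^\phi_0 = \fann(\phi) \subset \fsl(W)$. This is false precisely for the quartics that matter most in this paper. Write $\fg_0 = \fsl(W) \oplus \bbC\sfZ$ with $\sfZ$ acting on $\phi \in \Sym^4 W^*$ by $4$; an element $x + c\,\sfZ$ annihilates $\phi$ iff $x \cdot \phi = -4c\,\phi$, so $\fann(\phi)$ escapes $\fsl(W)$ whenever $\phi$ is an eigenvector of some $x \in \fsl(W)$ with nonzero eigenvalue. For a quadruple root $\phi = w_1^4$ (root type $\sfN$, i.e.\ exactly the submaximal models $\sfN 7_c$, $\sfN 6$) one gets the two-dimensional algebra $\fann(\phi) = \langle E_{21},\, h + \sfZ \rangle$, and for a triple root $\phi = w_1^3 w_2$ one gets $\langle 2h + \sfZ\rangle$; only for root types $[1,1,1,1]$, $[2,1,1]$, $[2,2]$ is $\fann(\phi) \subset \fsl(W)$. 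Consequently, knowing that $[x,y]$ has nonzero $\sfZ$-component does \emph{not} show $[x,y] \notin \fann(\phi)$ in the types $[3,1]$ and $[4]$. Type $[3,1]$ can still be rescued by a dimension count ($[x,\fg_{-1}]$ is $2$-dimensional while $\fann(\phi)$ is a line), but in the quadruple-root case both $[x,\fg_{-1}]$ and $\fann(\phi)$ are $2$-dimensional, and deciding whether the former can sit inside the latter requires the actual $G_2$ structure constants: under $\fg_1 \otimes \fg_{-1} \to \fg_0$ one has $[w_1, e_2] \in \bbC E_{21} \subset \fann(w_1^4)$ already, and whether $[w_1,e_1] = \tfrac{a}{2} h + b'\sfZ$ lies in $\langle h + \sfZ\rangle$ depends on the ratio $a : b'$, which Schur's lemma cannot determine. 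So the step you flag as the ``main obstacle'' (non-vanishing of the scalar in $c$) is not the hard part; the hard part is exactly the type-$\sfN$ computation, which your argument skips. You would need either to carry out that explicit root-space computation or to invoke the representation-theoretic criterion of \cite{KT2014} that handles it.
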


\subsection{A constraint on symmetries}

 Given a $(2,3,5)$-distribution, there is a uniquely associated regular, normal parabolic geometry $(\cG \to M, \omega)$ of type $(G,P) = (G_2,P_1)$, where $\cG$ is a $P$-principal bundle and $\omega$ is a $\fg$-valued Cartan connection.  Its curvature function is a $P$-equivariant function $\kappa : \cG \to \bigwedge^2(\fg/\fp)^* \otimes \fg \cong \bigwedge^2 \fg_+ \otimes \fg$, where the $P$-equivariant isomorphism is induced from the Killing form on $\fg$.  By regularity and normality, we have $\kappa \in \ker(\partial^*)^1$.  Its quotient by $\operatorname{im}(\partial^*)^1$ yields the harmonic curvature $\kappa_H : \cG \to H_2(\fg_+,\fg)^1$, which completely obstructs flatness of the geometry, i.e.\ $\kappa_H \equiv 0$ if and only if $\kappa \equiv 0$.

  Given any $u \in \cG$, we let $\ff(u) \subset \fg$ be the image of $\omega_u$ restricted to the infinitesimal symmetry algebra $\inf(\cG,\omega) = \{ \xi \in \fX(\cG)^P : \cL_\xi \omega = 0 \}$, where $\fX(\cG)^P$ denote $P$-invariant vector fields on $\cG$, and $\ff(u)$ inherits a depth 3 filtration
 \[
  \ff^{-3}(u) \supset \cdots \supset \ff^0(u) \supset \cdots \supset \ff^{3}(u)
 \]
 from that on $\fg$.  Although $\ff(u)$ is a filtered Lie algebra, it is generally not a Lie subalgebra of $\fg$.  Indeed, we have
 \begin{align}
 [\cdot,\cdot]_{\ff(u)} = [\cdot,\cdot]_\fg - \kappa_u(\cdot,\cdot).
 \end{align}
  However, when passing to the associated-graded $\fs(u) = \tgr(\ff(u))$, regularity shows that this is a graded subalgebra of $\fg$.  A key constraint is
 \begin{align} \label{E:SA}
 \fs(u) \subseteq \fa^{\kappa_H(u)}, \quad \forall u \in \cG.
 \end{align}
 (This was proven on the open dense subset of so-called ``regular points'' of $\cG$ in \cite[Thm.2.4.6]{KT2014}, and was improved to all points of $\cG$ in \cite[Thm.3.3]{KT2018}.)

 \subsection{Final steps}

 Let $(M,D)$ be a $(2,3,5)$-distribution and $(\cG \stackrel{\pi}{\to} M, \omega)$ its associated regular, normal Cartan geometry of type $(G,P) = (G_2,P_1)$.  Assuming non-flatness, $\kappa_H$ is non-vanishing somewhere, hence by continuity on an open set.   Given a general point $y \in M$, we may assume that
 \begin{align} \label{E:KH}
 \kappa_H(u) \neq 0, \quad \forall u \in \pi^{-1}(y).
 \end{align}

 Let $\vec{v}$ be a nonzero symmetry of $(M,D)$ vanishing at $y$, with corresponding $\xi \in \inf(\cG,\omega)$.  Fix any $u \in \pi^{-1}(x)$.  The natural $P$-filtration on $\fg$ induces a filtration $T^{-3} \cG \supset ... \supset T^3 \cG$ of $T\cG$.  (In particular, $T^{-1}_u \cG$ surjects onto the fibre $D_y = T^{-1}_y M$ via the differential $\pi_*$.)  Let $\eta \in \Gamma(T^{-1}\cG)^P$ be arbitrary.  Let $0 \neq X = \omega(\xi(u)) \in \fg^0 = \fp$ and $Y = \omega(\eta(u)) \in \fg^{-1}$.  Then the symmetry condition implies
 \begin{align}
 0 &= (\cL_\xi \omega)(\eta) = (\iota_\xi d\omega)(\eta) + d(\omega(\xi))(\eta) \\
 &= (d\omega)(\xi,\eta) + \eta \cdot (\omega(\xi)) = \xi \cdot \omega(\eta) - \omega([\xi,\eta]),
 \end{align}
 and so if $\zeta_X$ is the vertical vector field generated by $X$, then we have
 \begin{align}
 \omega([\xi,\eta])(u) &= (\xi \cdot \omega(\eta))(u) = (\zeta_X \cdot \omega(\eta))(u) \\
 &= \left.\frac{d}{dt} \right|_{t=0} \omega(\eta)(u \cdot \exp(tX)) \\
 &=  \left.\frac{d}{dt} \right|_{t=0} \operatorname{Ad}_{\exp(-tX)} (\omega(\eta)(u)) = -[X,Y]
 \end{align}
 To complete the proof, we need to show that
 \begin{align} \label{E:X}
 Y \mod \fp \quad\mapsto\quad [X,Y] \,\,\mod \fp,
 \end{align}
 induces a non-trivial map on $\bbP(\fg^{-1} / \fp)$.

 In terms of $\ff(u) = \omega_u(\inf(\cG,\omega))$, we have $0 \neq X \in \ff^0(u)$, but we need more precise information, i.e.\ to rule out $X \in \ff^i(u)$ for $i > 0$.  Given \eqref{E:KH}, then from \eqref{E:SA} and Lemma \ref{l.PR}, we conclude that
 \begin{align}
 \fs_{\geq 0}(u) \subset \fa^{\kappa_H(u)}_{\geq 0} = \fa^{\kappa_H(u)}_0 = \fann(\kappa_H(u)) \subset \fg_0.
 \end{align}
 Hence, we must have $0 \neq \tgr_0(X) \in \fs_0(u) \subset \fg_0$.  Since $\fg_0 \cong \fgl_2$ and $\fg_{-1}$ is the standard $\fgl_2$-representation, we have that $\ad_{\tgr_0(X)}|_{\fg_{-1}}$ is nontrivial.  The induced map on $\bbP(\fg^{-1} / \fp)$ is trivial if only if $\ad_{\tgr_0(X)}|_{\fg_{-1}}$ is a multiple of the identity element $\id_{\fg_{-1}}$, which we can identify with negative of the grading element $\sfZ \in \fz(\fg_0)$.  But $\sfZ \not\in \fann(\kappa_H(u))$ (since $H_2(\fg_+,\fg)$ is positively graded, by regularity).

\end{document}